\newtheorem{theorem}{Theorem}[section]
\newtheorem{lemma}[theorem]{Lemma}
\newtheorem{proposition}[theorem]{Proposition}
\newtheorem{corollary}[theorem]{Corollary}
\theoremstyle{definition}
\theoremstyle{definition}
\theoremstyle{definition}
\theoremstyle{definition}\newtheorem{claim}{Claim}
\newcommand{\rank}{\operatorname{rank}}
\title{5-regular graphs and the 3-dimensional rigidity matroid}
\author{Rebecca Monks and Anthony Nixon}
\date{\today}
\begin{document}

\maketitle

\begin{abstract}
A bar-joint framework $(G,p)$ in Euclidean $d$-space is rigid if the only edge-length-preserving continuous motions arise from isometries of $\mathbb{R}^d$. In the generic case, rigidity is determined by the generic $d$-dimensional rigidity matroid of $G$. The combinatorial nature of this matroid is well understood when $d=1,2$ but open when $d\geq 3$.
Jackson and Jord\'an \cite[Theorem 3.5]{JJbounded} characterised independence in this matroid for connected graphs with minimum degree at most $d+1$ and maximum degree at most $d+2$. 
Their characterisation is known to be false for $(d+2)$-regular graphs when $d\geq 4$ but when $d=3$ it remained open. Indeed they conjectured \cite[Conjecture 3.10]{JJbounded} that their characterisation extends to 5-regular graphs when $d=3$. The purpose of this article is to prove their conjecture. That is, we prove that every 5-regular graph that has at most $3n-6$ edges in any subgraph on $n\geq 3$ vertices is independent in the generic 3-dimensional rigidity matroid. 
\end{abstract}

\section{Introduction}

A bar-joint \emph{framework} $(G,p)$ is the combination of a finite, simple graph $G=(V,E)$ and a map $p:V\rightarrow \mathbb{R}^d$. The framework is \emph{$d$-rigid} if every edge-length-preserving continuous deformation of the vertices arises from an isometry of $\mathbb{R}^d$. In general it is a computationally challenging problem to determine if a given framework is $d$-rigid \cite{abbott}. This motivates restricting attention to generic frameworks: $(G,p)$ is \emph{generic} if the set of coordinates of $p$ forms an algebraically independent set over $\mathbb{Q}$. 

As is typical in the literature we will analyse generic framework rigidity by a linearisation known as infinitesimal rigidity. Concretely, we define the \emph{rigidity matrix} $R_d(G,p)$ to be the $|E|\times d|V|$ matrix with rows indexed by the edges and $d$-tuples of columns indexed by the vertices. The row for an edge $uv\in E$ is zero everywhere except the $d$-tuple of columns corresponding to $u$ where the entries are $p(u)-p(v)$ and the $d$-tuple of columns corresponding to $v$ where the entries are $p(v)-p(u)$. It is easy to see that, when $|V|\geq d$, $\rank R_d(G,p)\leq d|V|-{d+1\choose 2}$. 

It was shown by Asimow and Roth \cite{asi-rot} that a generic framework $(G,p)$ is $d$-rigid if and only if $|V|<d$ and $G$ is complete or $|V|\geq d$ and $\rank R_d(G,p)= d|V|-{d+1\choose 2}$. This equivalence implies that $d$-rigidity is a generic property. That is, if $(G,p)$ is $d$-rigid then every generic framework $(G,q)$ in $\mathbb{R}^d$ is $d$-rigid. 

The \emph{generic $d$-dimensional rigidity matroid} $\mathcal{R}_d(G,p)$ is the row matroid of $R_d(G,p)$. This depends only on $G$ and we will drop the $G$ and refer to $\mathcal{R}_d$ when the context is clear. We will say that $G$ is \emph{$\mathcal{R}_d$-independent} if the edge set of $G$ is independent in $\mathcal{R}_d$, i.e. if $R_d(G,p)$ has linearly independent rows, and that $G$ is \emph{$\mathcal{R}_d$-dependent} if it is not $\mathcal{R}_d$-independent. Further $G$ is \emph{minimally $\mathcal{R}_d$-rigid} if some generic framework $(G,p)$ is $d$-rigid and $G$ is $\mathcal{R}_d$-independent. We will also use $r_d$ to denote the rank function of $\mathcal{R}_d$ and \emph{$\mathcal{R}_d$-circuit} to refer to a graph $G$ that is $\mathcal{R}_d$-dependent but $G-e$ is $\mathcal{R}_d$-independent for all edges $e$ of $G$.

When $d=1$, a simple folklore result shows that $\mathcal{R}_1$ is  isomorphic to the cycle matroid and hence $\mathcal{R}_1$-independent graphs are forests and $\mathcal{R}_1$-circuits are cycles. When $d=2$, $\mathcal{R}_2$ can also be described in purely combinatorial terms due to a celebrated result of Pollaczek-Geiringer \cite{PG} known as Laman's theorem \cite{laman}. However when $d>2$ no known combinatorial description is available and obtaining one is a central problem in rigidity theory. See \cite{CJT,GGJN,JJbounded,JJdress,Jor,Tay,Wlong}, inter alia, for partial results in this direction.

A necessary condition for $\mathcal{R}_d$-independence goes back to Maxwell \cite{Max}. For $G=(V,E)$ and $X\subset V$, we let $i_G(X)$, simply $i(X)$ when the graph is clear from the context, denote the number of edges in the subgraph of $G$ induced by $X$. Let us say that $G$ is \emph{$d$-sparse} if $i(X)\leq d|X|-{d+1 \choose 2}$ holds for all $X\subset V$ with $|X|\geq d$. If $G$ is $d$-sparse and $|E|=d|V|-{d+1\choose 2}$ then we say that $G$ is \emph{$d$-tight}. 

\begin{lemma}[Maxwell \cite{Max}]\label{lem:max}
Let $G$ be $\mathcal{R}_d$-independent. Then $G$ is $d$-sparse.    
\end{lemma}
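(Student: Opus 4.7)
The plan is to deduce $d$-sparsity directly from the rank upper bound on the rigidity matrix already recorded in the excerpt. Fix a generic realisation $p:V\to\mathbb{R}^d$ witnessing $\mathcal{R}_d$-independence, so that the rows of $R_d(G,p)$ are linearly independent. Pick any $X\subset V$ with $|X|\geq d$, and let $H=G[X]$; the goal is to show $i_G(X)=|E(H)|\leq d|X|-\binom{d+1}{2}$.

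The key observation is local support: every row of $R_d(G,p)$ indexed by an edge $uv\in E(H)$ is, by definition, supported entirely on the $d$-tuples of columns corresponding to $u$ and $v$, both of which lie in $X$. Consequently, if we delete from $R_d(G,p)$ all columns indexed by vertices outside $X$ and all rows indexed by edges outside $E(H)$, the surviving matrix is exactly $R_d(H,p|_X)$, and the rows indexed by $E(H)$ remain linearly independent because they were independent in the larger matrix. Hence
\[ i_G(X) = |E(H)| \leq \rank R_d(H,p|_X). \]

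Now I would invoke the rank bound stated just before Asimow--Roth in the excerpt: whenever $|X|\geq d$, one has $\rank R_d(H,p|_X)\leq d|X|-\binom{d+1}{2}$. Chaining the two inequalities yields $i_G(X)\leq d|X|-\binom{d+1}{2}$, which is precisely the $d$-sparsity condition, and completes the proof.

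There is essentially no obstacle here: the only subtlety is the support remark that lets one pass from a submatrix of $R_d(G,p)$ to the rigidity matrix $R_d(H,p|_X)$ of the induced subgraph, and this is immediate from the definition of the rigidity matrix. The generic hypothesis is not even needed — the argument works for any realisation $p$, since we only use one direction of the rank comparison.
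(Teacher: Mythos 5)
Your argument is correct. Note that the paper does not prove this lemma at all: it is stated as a classical result with a citation to Maxwell, so there is no in-paper proof to compare against. What you give is the standard derivation — restrict to the rows indexed by $E(G[X])$, observe they are supported only on the columns of $X$ so that the surviving submatrix is exactly $R_d(G[X],p|_X)$, and apply the rank bound $\rank R_d(H,q)\leq d|X|-\binom{d+1}{2}$ for $|X|\geq d$ — and each step is sound; deleting columns that are zero on the chosen rows cannot destroy their independence, which is precisely the point of your local-support remark. The only caveat worth flagging is your closing claim that genericity is not needed: it is harmless here because the paper states the rank upper bound for arbitrary $p$, but the hypothesis of $\mathcal{R}_d$-independence already forces you to work with a generic $p$ (that is how independence in the generic matroid is defined), so the remark buys nothing and could mislead a reader into thinking the lemma says something about non-generic realisations.
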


The aforementioned characterisations of $\mathcal{R}_d$-independence when $d\leq 2$ imply that Maxwell's condition is also sufficient when $d=1,2$. However it is no longer sufficient when $d\geq 3$. It is therefore interesting to determine special families of graphs where Maxwell's condition is sufficient to guarantee $\mathcal{R}_d$-independence.
In particular, Jackson and Jord\'an \cite{JJbounded} proved the following result.

\begin{theorem}[{\cite[Theorem 3.5]{JJbounded}}] \label{thm:bounded}
Let $G$ be a connected graph with minimum degree at most $d+1$ and maximum degree at most $d+2$. Then $G$ is $\mathcal{R}_d$-independent if and only if $G$ is $d$-sparse.    
\end{theorem}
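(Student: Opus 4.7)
The plan is a standard induction on $|V(G)|$ using Henneberg-type reductions. Necessity is immediate from Maxwell's lemma (Lemma~\ref{lem:max}). For sufficiency, suppose $G$ is connected, $d$-sparse, with $\delta(G) \leq d+1$ and $\Delta(G) \leq d+2$. The base cases $|V(G)| \leq d+1$ are trivial, since $d$-sparsity forces $G$ to be a subgraph of $K_{d+1}$, which is $\mathcal{R}_d$-independent.

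For the inductive step, pick a vertex $v$ with $\deg(v)=\delta(G)\leq d+1$. In the first case, $\deg(v) \leq d$, I would apply the $0$-extension principle: $G-v$ is $d$-sparse and each component inherits the degree hypotheses, so by induction each component is $\mathcal{R}_d$-independent, hence so is $G-v$, and re-attaching $v$ with at most $d$ edges preserves $\mathcal{R}_d$-independence by a direct rigidity-matrix argument in generic position. In the second case, $\deg(v) = d+1$ with neighbours $N(v)=\{u_1,\dots,u_{d+1}\}$, I would try an inverse $1$-extension: find non-adjacent $u_i,u_j \in N(v)$ such that $G' := (G-v)+u_iu_j$ is still $d$-sparse, an \emph{admissible} pair. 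One verifies that $G'$ retains the degree hypotheses (the degrees of $u_i,u_j$ are unchanged, the other neighbours of $v$ drop by one, and $\Delta(G') \leq \Delta(G) \leq d+2$), so induction applies componentwise and Whiteley's $1$-extension lemma delivers $\mathcal{R}_d$-independence of $G$.

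The main obstacle is proving that an admissible pair exists. First one rules out the trivial obstruction: $N(v)$ cannot induce a clique, for otherwise $N(v)\cup\{v\}$ would carry $\binom{d+2}{2}$ edges on $d+2$ vertices, contradicting $d$-sparsity. Hence non-adjacent pairs in $N(v)$ always exist; the question is whether some such pair is not \emph{blocked} by a $d$-tight set. Arguing by contradiction, suppose every non-adjacent pair $\{u_i,u_j\}\subseteq N(v)$ is contained in some $d$-tight set $X \subseteq V(G)\setminus\{v\}$. Using the submodularity of the function $f(X)=d|X|-\binom{d+1}{2}-i_G(X)$, intersections and unions of sufficiently overlapping tight sets remain tight, so the blocking sets around $v$ can be organized into a canonical maximal tight structure covering $N(v)$ (up to perhaps one vertex). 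A careful edge count inside this structure, combined with $\deg(v)=d+1$ and the global bound $\Delta(G)\leq d+2$, should force an edge surplus that contradicts $d$-sparsity somewhere. This tight-set analysis is the heart of the argument and is precisely where things break when $\delta(G)=d+2$: no vertex $v$ of degree $d+1$ is available for reduction, which is exactly the obstruction the present paper must overcome for $d=3$ by an independent method.
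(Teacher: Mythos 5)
For context: the paper does not actually prove Theorem~\ref{thm:bounded} --- it is imported verbatim from \cite{JJbounded} --- and the only in-paper comparison is the proof of its extension, Theorem~\ref{thm:1highdeg}, which follows exactly the template you describe (induction via $0$- and $1$-reductions, Lemma~\ref{lem:01ext} to lift independence, Lemma~\ref{lem:reduclem} to supply the admissible pair). So your architecture is the standard one, but two steps do not survive scrutiny. The more concrete failure is the claim that after the $1$-reduction ``induction applies componentwise.'' If $v$ is a cut vertex, a component $C$ of $G-v$ may meet $N(v)$ only in the chosen pair $\{u_i,u_j\}$; the degrees of $u_i,u_j$ in $C+u_iu_j$ equal their degrees in $G$ (as you note), and nothing in your argument prevents every other vertex of $C$ from having degree exactly $d+2$ already. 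Then $C+u_iu_j$ is $(d+2)$-regular, violates the minimum-degree hypothesis, and the induction stalls on that component --- precisely the regime the theorem cannot reach. The proof of Theorem~\ref{thm:1highdeg} shows the repair: dispose of cut vertices and $2$-separations first, and when the reduction must straddle two sides, place the new edge between them and invoke Lemma~\ref{lem:intbridge}(\ref{it:intbridge:rank}) to certify the rank increase; only in the $2$-connected case is the clean $1$-reduction argument run.

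Second, the step you correctly identify as the heart of the argument --- that some non-adjacent pair in $N(v)$ is unblocked --- is asserted rather than proved: ``a careful edge count \dots should force an edge surplus'' is a statement of intent. This is exactly \cite[Lemma 3.3]{JJbounded}, quoted in the paper as Lemma~\ref{lem:reduclem}, and it is where all the work lives. Your submodularity heuristic conceals the real difficulty: two $d$-tight blockers $X,Y$ combine into a tight union only when $|X\cap Y|\ge d$; when they share fewer than $d$ vertices (which does occur, e.g.\ two blockers meeting in a single edge for $d=3$), $i(X\cap Y)$ can exceed $d|X\cap Y|-\binom{d+1}{2}$ and $X\cup Y$ need not be tight, so there is no automatic ``canonical maximal tight structure'' covering $N(v)$. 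The degree hypothesis enters exactly there: Lemma~\ref{lem:reduclem} requires the neighbours of $v$ of degree at least $d+3$ to induce a clique (vacuous when $\Delta\le d+2$), and the counting that substitutes for the failed union argument uses this. If you may cite \cite[Lemma 3.3]{JJbounded} as a black box, as the paper does, your proof closes up once the connectivity issue above is repaired; if you are expected to prove it, the proposal is missing its main ingredient.
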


When $d>3$ this result is best possible in the sense that the complete bipartite graph $K_{d+2,d+2}$ is $\mathcal{R}_d$-dependent \cite[Theorem 5.2.1]{GSS} but is also $d$-sparse. This leaves open the case when $d=3$ and $G$ is 5-regular. (Here $K_{5,5}$ is an $\mathcal{R}_3$-circuit but is not 3-sparse.) Jackson and Jord\'an conjectured \cite[Conjecture 3.10]{JJbounded} that every 5-regular, 3-sparse graph is $\mathcal{R}_d$-independent. We prove their conjecture in this paper.

\begin{theorem}\label{thm:main}
Let $G=(V,E)$ be a 5-regular graph. Then $G$ is $\mathcal{R}_3$-independent if and only if $G$ is $3$-sparse.    
\end{theorem}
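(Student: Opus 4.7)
The ``only if'' direction is immediate from Maxwell's Lemma~\ref{lem:max}. For the converse I would proceed by induction on $n=|V|$. Since $G$ is $5$-regular, $|E|=5n/2$, and $3$-sparsity forces $n\geq 12$; the base case $n=12$ (where $G$ is $3$-tight) can be handled by a finite check, e.g.\ by a direct rank computation of the rigidity matrix on each $5$-regular $3$-tight graph on $12$ vertices.

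The engine of the induction is a \emph{$2$-reduction}. Fix a vertex $v$ of degree $5$ with neighbors $N(v)=\{u_1,\dots,u_5\}$, delete $v$, and add two non-edges $e_1,e_2$ among $N(v)$ to obtain $G'=G-v+\{e_1,e_2\}$. Any two edges cover at most four of the five neighbors, so exactly one neighbor becomes degree $4$ in $G'$, while every other vertex retains degree at most $5$. Hence $G'$ has minimum degree $\leq 4$ and maximum degree $\leq 5$, and if $G'$ is additionally $3$-sparse then Theorem~\ref{thm:bounded} immediately gives $\mathcal{R}_3$-independence of $G'$. The reverse operation taking $G'$ back to $G$ is a \emph{$2$-extension}; the remaining input required is a combinatorial sufficient condition under which a $3$-dimensional $2$-extension preserves $\mathcal{R}_3$-independence, of the type established by Jackson--Jord\'an and subsequent authors.

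With these tools in place, the heart of the argument becomes the following \emph{key lemma}: every $5$-regular $3$-sparse graph $G$ admits a degree-$5$ vertex $v$ and a pair of non-edges $e_1,e_2$ in $N(v)$ such that (i)~$G'=G-v+\{e_1,e_2\}$ is $3$-sparse, and (ii)~the $2$-extension recovering $G$ from $G'$ is admissible. Obstruction (i) comes from $3$-tight subsets $X\subseteq V\setminus\{v\}$ that already contain four of the $u_i$: adding an edge inside such an $X$ would destroy sparsity. Obstruction (ii) is a further local condition tying $e_1,e_2$ to the rigid structure around $v$. I would attack the key lemma by a case analysis on the pattern of edges in $G[N(v)]$ and on the $3$-tight sets meeting $N(v)$, exploiting $5$-regularity (each $u_i$ has four other neighbors to appeal to) and the strict slack $5n/2 < 3n-6$ for $n>12$ to show that enough non-edges lie outside all obstructing tight sets. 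If no edge pair works at a given $v$, I would move to another degree-$5$ vertex; only finitely many vertices can be ``bad'', and the global sparsity budget forces at least one good choice.

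\textbf{Main obstacle.} The chief difficulty is the key lemma, specifically handling (i) and (ii) simultaneously. Tight subsets passing through $N(v)$ can be numerous and interlocking, and admissibility adds further constraints that can eliminate otherwise feasible edge pairs. The extremal cases likely occur when $G[N(v)]$ is unusually dense or when several $3$-tight subgraphs pile up at a single vertex; handling these may force a finer structural argument or a switch between candidate reduction vertices. A secondary difficulty may arise when $G$ contains low-order vertex cuts, so that the induction step must be routed through a decomposition into rigid blocks rather than through a single local reduction.
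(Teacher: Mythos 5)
Your overall architecture (induct via a local reduction at a degree-$5$ vertex, check the reduced graph with Theorem~\ref{thm:bounded}, and fight the combinatorial obstructions to sparsity) matches the shape of the paper's argument, but the specific operation you build it on creates a fatal gap. Your ``$2$-extension'' --- delete two edges among $N(v)$ and add back a degree-$5$ vertex $v$ --- is exactly the $3$-dimensional X-replacement operation, and whether X-replacement preserves $\mathcal{R}_3$-independence is a well-known \emph{open problem} (see the discussion in the introduction citing \cite{GSS,Wlong}; the paper explicitly notes that Watson's proof of the analogous cofactor-matroid result uses X-replacement and that this is precisely why a different route is needed for $\mathcal{R}_3$). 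There is no ``combinatorial sufficient condition of the type established by Jackson--Jord\'an'' for this operation in $\mathbb{R}^3$ that you can invoke; only $0$- and $1$-extensions (Lemma~\ref{lem:01ext}) are known to preserve independence in general. So the ``remaining input'' you defer to the literature does not exist, and supplying it would itself be a major result independent of this theorem.

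The paper's way around this is to replace the $2$-extension by Whiteley's vertex split and spider split (Lemma~\ref{lem:vsplit}), whose inverses are edge contraction and spider contraction. Contracting a reducible edge or vertex-pair produces a graph with a degree-$4$ vertex and at most one vertex of degree exceeding $5$, which is why the paper first proves the extension Theorem~\ref{thm:1highdeg} allowing one high-degree vertex (your $G'$ would satisfy the degree bounds of Theorem~\ref{thm:bounded} directly, but that advantage is moot since the extension step fails). The genuinely hard part --- showing that some contraction is admissible, i.e.\ that the ``blockers'' ($6$-, $7$- and $8$-sets through the contracted pair) cannot all conspire --- is analogous in spirit to your key lemma, and the paper resolves it via the isostatic substitution principle, a detailed analysis of intersecting tight-ish sets, and a finite computation for $|V|\in\{12,14\}$. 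If you want to salvage your plan, you must either prove the relevant special case of X-replacement (not advisable) or switch to the splitting operations.
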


We remark that 5-regular, 3-sparse graphs were already known to be independent in the cofactor matroid \cite{Wat}. Here the cofactor matroid is a closely related matroid to $\mathcal{R}_3$ that arises from the study of bivariate splines, see \cite{CJT} for a definition and details on the relationship between the two matroids. The proof in \cite{Wat} uses the so-called X-replacement operation. It is a central problem in rigidity theory to determine if this operation preserves $\mathcal{R}_3$-independence \cite{GSS,Wlong}. 
We will take an alternative approach using the following geometric operations.

A \emph{($d$-dimensional) vertex split} of a graph $G=(V,E)$ is the operation defined as follows: choose $v\in V$, $x_1,x_2,\dots,x_{d-1}\in N_G(v)$ and 
a partition $N_1,N_2$ of 
pairwise disjoint sets $N_1,N_2$ with $N_1\cup N_2=N_G(v)\setminus \{x_1,x_2,\dots,x_{d-1}\}$; then delete $v$ from
$G$ and add two new vertices $v_1,v_2$ joined to $N_1,N_2$, 
respectively; finally add new edges $v_1v_2,v_1x_1, v_2x_1, v_1x_2,v_2x_2,\dots, v_1x_{d-1},v_2x_{d-1}$. This operation is illustrated in Figure \ref{fig:vsplit}. The reverse operation is known as \emph{edge contraction}.

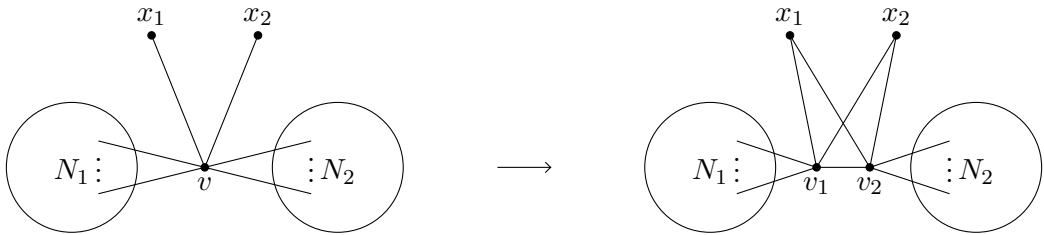
\begin{figure}[ht]
\begin{center}
\begin{tikzpicture}[scale=0.7]
\draw (-3,-5) circle (35pt);
\draw (4,-5) circle (35pt);
\draw (9,-5) circle (35pt);
\draw (-8,-5) circle (35pt);

\draw (-3,-5.5) circle (0pt) node[anchor=south]{$N_2$};
\draw (4,-5.5) circle (0pt) node[anchor=south]{$N_1$};
\draw (9,-5.5) circle (0pt) node[anchor=south]{$N_2$};
\draw (-8,-5.5) circle (0pt) node[anchor=south]{$N_1$};

\filldraw (-5.5,-5) circle (2pt) node[anchor=north]{$v$};
\filldraw (6,-5) circle (2pt) node[anchor=north]{$v_1$};
\filldraw (7,-5) circle (2pt) node[anchor=north]{$v_2$};

\filldraw (-6.5,-2.5) circle (2pt) node[anchor=south]{$x_1$};
\filldraw (-4.5,-2.5) circle (2pt) node[anchor=south]{$x_2$};

\filldraw (5.5,-2.5) circle (2pt) node[anchor=south]{$x_1$};
\filldraw (7.5,-2.5) circle (2pt) node[anchor=south]{$x_2$};

\node at (4.5,-4.9){$\vdots$};
\node at (8.5,-4.9){$\vdots$};

\node at (-3.5,-4.9){$\vdots$};
\node at (-7.5,-4.9){$\vdots$};

\draw[black]
(-5.5,-5) -- (-6.5,-2.5);

\draw[black]
(-5.5,-5) -- (-4.5,-2.5);

\draw[black]
(-5.5,-5) -- (-3.5,-4.5);

\draw[black]
(-5.5,-5) -- (-3.5,-5.5);

\draw[black]
(-5.5,-5) -- (-7.5,-5.5);

\draw[black]
(-5.5,-5) -- (-7.5,-4.5);

\draw[black]
(6,-5) -- (7,-5);

\draw[black]
(6,-5) -- (5.5,-2.5);

\draw[black]
(6,-5) -- (7.5,-2.5);

\draw[black]
(5.5,-2.5) -- (7,-5);

\draw[black]
(7.5,-2.5) -- (7,-5);

\draw[black]
(4.5,-4.5) -- (6,-5);

\draw[black]
(4.5,-5.5) -- (6,-5);

\draw[black]
(7,-5) -- (8.5,-4.5);

\draw[black]
(7,-5) -- (8.5,-5.5);

\draw[black]
(0,-5) -- (1,-5) -- (0.9,-5.1);

\draw[black]
(1,-5) -- (0.9,-4.9);

\end{tikzpicture}
\caption{A schematic of the 3-dimensional vertex split.} \label{fig:vsplit}
\end{center}
\end{figure}

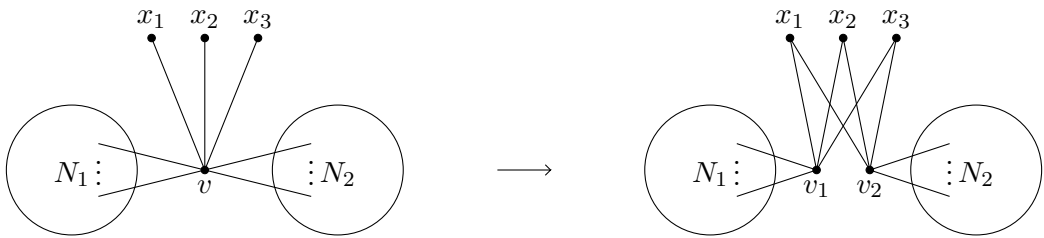
\begin{figure}[ht]
\begin{center}
\begin{tikzpicture}[scale=0.7]
\draw (-3,-5) circle (35pt);
\draw (4,-5) circle (35pt);
\draw (9,-5) circle (35pt);
\draw (-8,-5) circle (35pt);

\draw (-3,-5.5) circle (0pt) node[anchor=south]{$N_2$};
\draw (4,-5.5) circle (0pt) node[anchor=south]{$N_1$};
\draw (9,-5.5) circle (0pt) node[anchor=south]{$N_2$};
\draw (-8,-5.5) circle (0pt) node[anchor=south]{$N_1$};

\filldraw (-5.5,-5) circle (2pt) node[anchor=north]{$v$};
\filldraw (6,-5) circle (2pt) node[anchor=north]{$v_1$};
\filldraw (7,-5) circle (2pt) node[anchor=north]{$v_2$};

\filldraw (-6.5,-2.5) circle (2pt) node[anchor=south]{$x_1$};
\filldraw (-4.5,-2.5) circle (2pt) node[anchor=south]{$x_3$};
\filldraw (-5.5,-2.5) circle (2pt) node[anchor=south]{$x_2$};

\filldraw (5.5,-2.5) circle (2pt) node[anchor=south]{$x_1$};
\filldraw (7.5,-2.5) circle (2pt) node[anchor=south]{$x_3$};
\filldraw (6.5,-2.5) circle (2pt) node[anchor=south]{$x_2$};

\node at (4.5,-4.9){$\vdots$};
\node at (8.5,-4.9){$\vdots$};

\node at (-3.5,-4.9){$\vdots$};
\node at (-7.5,-4.9){$\vdots$};

\draw[black]
(-5.5,-5) -- (-6.5,-2.5);

\draw[black]
(-5.5,-5) -- (-4.5,-2.5);

\draw[black]
(-5.5,-5) -- (-5.5,-2.5);

\draw[black]
(-5.5,-5) -- (-3.5,-4.5);

\draw[black]
(-5.5,-5) -- (-3.5,-5.5);

\draw[black]
(-5.5,-5) -- (-7.5,-5.5);

\draw[black]
(-5.5,-5) -- (-7.5,-4.5);

\draw[black]
(6,-5) -- (5.5,-2.5);

\draw[black]
(6,-5) -- (7.5,-2.5);

\draw[black]
(5.5,-2.5) -- (7,-5);

\draw[black]
(7.5,-2.5) -- (7,-5);

\draw[black]
(4.5,-4.5) -- (6,-5);

\draw[black]
(6.5,-2.5) -- (7,-5);

\draw[black]
(6.5,-2.5) -- (6,-5);

\draw[black]
(4.5,-5.5) -- (6,-5);

\draw[black]
(7,-5) -- (8.5,-4.5);

\draw[black]
(7,-5) -- (8.5,-5.5);

\draw[black]
(0,-5) -- (1,-5) -- (0.9,-5.1);

\draw[black]
(1,-5) -- (0.9,-4.9);

\end{tikzpicture}
\caption{A schematic of the 3-dimensional spider split.} \label{fig:ssplit}
\end{center}
\end{figure}

A \emph{($d$-dimensional) spider split} of a graph $G=(V,E)$ is the operation defined as follows: choose $v\in V$, $x_1,x_2,\dots,x_d\in N_G(v)$ and 
a partition $N_1,N_2$ of 
pairwise disjoint sets $N_1,N_2$ with $N_1\cup N_2=N_G(v)\setminus \{x_1,x_2,\dots,x_{d}\}$; then delete $v$ from
$G$ and add two new vertices $v_1,v_2$ joined to $N_1,N_2$, 
respectively; finally add new edges $v_1x_1, v_2x_1,v_1x_2,v_2x_2,\dots, v_1x_{d},v_2x_{d}$. This operation is illustrated in Figure \ref{fig:ssplit}. The reverse operation is known as \emph{spider contraction}.

\begin{lemma}[{\cite[Proposition 10]{Whi}}]\label{lem:vsplit} 
Let $G$ be $\mathcal{R}_d$-independent and let $G'$ be obtained from $G$ by a vertex split or a spider split. Then $G'$ is $\mathcal{R}_d$-independent.
\end{lemma}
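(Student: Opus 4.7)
The plan is to fix a generic realization $p$ of $G$ for which $R_d(G,p)$ has linearly independent rows, and to construct an explicit (possibly non-generic) realization $p'$ of $G'$ whose rigidity matrix $R_d(G',p')$ also has linearly independent rows. Since having full row rank is Zariski open on the space of realizations, and $\mathcal{R}_d$-independence is equivalent, via Asimow--Roth, to this condition holding at a generic realization, a single such witness will suffice.

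For the vertex split at $v$ with common neighbours $x_1,\ldots,x_{d-1}$, I would set $p'(u)=p(u)$ for every $u\neq v_1,v_2$, put $p'(v_1)=p(v)$, and define $p'(v_2)=p(v)+t$ where $t$ is a nonzero vector chosen so that the small block of new rows (for the edges $v_1v_2$ and the pairs $v_1x_i,v_2x_i$) is non-degenerate. A natural choice is to pick $t$ in the linear span of $\{p(v)-p(x_i):i=1,\ldots,d-1\}$, placing $v_1,v_2,x_1,\ldots,x_{d-1}$ on a common affine hyperplane. The spider-split construction is parallel: set $p'(v_2)=p(v)+t$ under an analogous coplanarity condition involving the extra neighbour $x_d$, and omit the edge $v_1v_2$.

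The algebraic core is to show that the rows of $R_d(G',p')$ are linearly independent. I would assume a dependence $\sum_{e\in E(G')}\lambda_e R_e=0$ and perform two reductions. First, applying the column operation that adds each $v_2$ column to the matching $v_1$ column, combined with summing each pair of rows for $(v_1x_i,v_2x_i)$, produces a submatrix that agrees, up to rescaling and error terms absorbed by the choice of $t$, with $R_d(G,p)$ under the obvious bijection between edges of $G$ and their images in $G'$. The hypothesis that $G$ is $\mathcal{R}_d$-independent then forces all coefficients $\lambda_e$ for edges disjoint from $v$, all $\lambda_{v_iu}$ for $u\in N_i$, and all sums $\lambda_{v_1x_i}+\lambda_{v_2x_i}$ to vanish. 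The remaining information lives in the differences $\mu_i:=\lambda_{v_1x_i}-\lambda_{v_2x_i}$ and in $\lambda_{v_1v_2}$ (for the spider split, in the analogous data attached to $x_d$).

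The second reduction shows that these residual coefficients also vanish, which amounts to verifying that a small $d\times d$ matrix whose rows are $t$ (contributed by $v_1v_2$, or in the spider case by the extra common neighbour $x_d$) together with the vectors $p(v)-p(x_i)$ is non-singular. This is the main technical obstacle I anticipate: orchestrating the row and column operations so as to isolate this small matrix cleanly, and then checking its non-singularity from the geometric choice of $t$ and the genericity of $p$. In the vertex-split case the edge $v_1v_2$ plays the role of the ``missing'' common neighbour, while in the spider-split case the extra neighbour $x_d$ does the same job. Once the non-singularity is established every $\lambda_e$ must be zero, and the same argument handles both split operations in a unified way.
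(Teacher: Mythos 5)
The paper does not prove this lemma; it is quoted from Whiteley \cite[Proposition 10]{Whi}, and your overall strategy (perturb a generic independent realization of $G$, reduce $R_d(G',p')$ by row/column operations to a block form $\bigl(\begin{smallmatrix} R_d(G,p) & * \\ 0 & D\end{smallmatrix}\bigr)$, and check that the small $d\times d$ block $D$ is non-singular) is essentially the standard proof of that proposition. However, your proposal contains a genuine error at precisely the step you identify as the crux. The block $D$ has rows $t,\,p(v)-p(x_1),\dots,p(v)-p(x_{d-1})$ (in the vertex-split case), so it is non-singular if and only if $t\notin\spann\{p(v)-p(x_i):1\le i\le d-1\}$. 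Your ``natural choice'' of $t$ \emph{inside} that span is exactly the degenerate one: it makes $D$ singular by construction, and geometrically it places $v_1,v_2,x_1,\dots,x_{d-1}$ in a common affine hyperplane, which for $d=2$ literally produces a collinear (hence $\mathcal{R}_2$-dependent) triangle $v_1v_2x_1$ inside $G'$. The correct condition is the opposite: push $v_2$ off the hyperplane spanned by $p(v),p(x_1),\dots,p(x_{d-1})$. For the spider split no condition on $t$ is needed at all; there $D$ has rows $p(v)-p(x_1),\dots,p(v)-p(x_d)$ and its non-singularity follows from genericity of $p$ alone.

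A second, smaller gap: at a fixed finite $t$ the rows for $v_2w$ ($w\in N_2$) carry entries $p(v)+t-p(w)$ rather than $p(v)-p(w)$, so after your reduction the top block is \emph{not} $R_d(G,p)$ but a perturbation of it, and the claim that these error terms are ``absorbed by the choice of $t$'' is unsupported. The clean fix is to set $p'(v_2)=p(v)+su$ with $u\notin\spann\{p(v)-p(x_i)\}$, rescale the new rows by $1/s$, and let $s\to 0$: the limit matrix has exactly the block form above with no error terms, it has independent rows, and lower semicontinuity of rank then gives independence of $R_d(G',p'_s)$ for small $s\neq 0$, hence for generic realizations. With the sign of the geometric condition corrected and the limit made explicit, your argument becomes the standard one.
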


We will also need the so-called isostatic substitution principle.

\begin{lemma}[Whiteley {\cite[Corollary 2.8]{WhiSub}}, see also Finbow, Ross and Whiteley {\cite[Lemma 2.2]{FRW}}]\label{lem:isosub}
 Let $G=(V,E)$ be a graph and let $H$ be a proper induced subgraph of $G$ that is minimally $\mathcal{R}_d$-rigid. Let $H'$ be a minimally $\mathcal{R}_d$-rigid graph on the same vertex set as $H$ and let $G'$ be obtained from $G$ by deleting the edge set of $H$ and adding the edge set of $H'$. Then, for any generic $p$, $\mbox{rank } R(G,p)=\mbox{rank } R(G',p)$, i.e. $G$ is $\mathcal{R}_d$-independent if and only if $G'$ is $\mathcal{R}_d$-independent. 
\end{lemma}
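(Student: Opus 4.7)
The plan is to reduce the statement to a single linear-algebra observation: at a generic $p$, any minimally $\mathcal{R}_d$-rigid graph on a vertex set $W$ has a rigidity matrix whose row space depends only on $W$ and $p|_W$, not on the choice of graph. The substitution result then follows by unpacking how the rows indexed by $E(H)$ and $E(H')$ embed into the ambient matrices $R_d(G,p)$ and $R_d(G',p)$.

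Let $W=V(H)=V(H')$ and set $m=d|W|-\binom{d+1}{2}$ (the case $|W|<d$ forces $H=H'=K_{|W|}$ and is trivial, so I may assume $|W|\geq d$). For generic $p$, the row space of $R_d(K_W,p|_W)$ has dimension exactly $m$, since its orthogonal complement is the space of trivial infinitesimal isometries of a generic configuration in $\mathbb{R}^d$, which has dimension $\binom{d+1}{2}$. Because $H$ is minimally $\mathcal{R}_d$-rigid, the $m$ rows of $R_d(H,p|_W)$ are linearly independent and lie in this $m$-dimensional row space, so they span it; the same argument applied to $H'$ shows $R_d(H,p|_W)$ and $R_d(H',p|_W)$ have the same row space. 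In particular $|E(H)|=|E(H')|=m$, so $|E(G)|=|E(G')|$.

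Now I transfer this to the full matrices. Each row of $R_d(G,p)$ indexed by an edge of $H$ is obtained from the corresponding row of $R_d(H,p|_W)$ by padding with zeros on the coordinates indexed by $V\setminus W$, and the analogous statement holds for $H'$ inside $R_d(G',p)$. Under the natural inclusion $\mathbb{R}^{d|W|}\hookrightarrow\mathbb{R}^{d|V|}$ that places $0$ outside $W$, the $H$-block and the $H'$-block therefore span the same subspace of $\mathbb{R}^{d|V|}$. The remaining rows are indexed by $E(G)\setminus E(H)=E(G')\setminus E(H')$, which coincide edge-for-edge because $H$ is induced (so no edge of $E(H')\setminus E(H)$ could already have been an edge of $G$ outside $H$). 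Hence $R_d(G,p)$ and $R_d(G',p)$ share a common row space, giving $\rank R_d(G,p)=\rank R_d(G',p)$, and combined with $|E(G)|=|E(G')|$ this yields the equivalence of $\mathcal{R}_d$-independence. There is no real obstacle; the only substantive ingredient is the dimension count for $R_d(K_W,p|_W)$ together with the observation that padding with zeros is a coordinate inclusion, so matching row spaces on $W$ promote to matching row spaces on $V$.
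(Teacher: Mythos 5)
The paper does not prove this lemma itself---it is quoted from Whiteley \cite[Corollary 2.8]{WhiSub} and Finbow--Ross--Whiteley \cite[Lemma 2.2]{FRW}---so there is no internal proof to compare against, but your argument is correct and is essentially the standard proof of the isostatic substitution principle. The key points all check out: for generic $p$ and $|W|\geq d$ the row space of $R_d(K_W,p|_W)$ has dimension $m=d|W|-\binom{d+1}{2}$, so the $m$ independent rows of each of $R_d(H,p|_W)$ and $R_d(H',p|_W)$ are bases of that same space; zero-padding is a linear injection, so the $H$- and $H'$-blocks span the same subspace of $\mathbb{R}^{d|V|}$; and the hypothesis that $H$ is \emph{induced} in $G$ is exactly what guarantees $E(G)\setminus E(H)=E(G')\setminus E(H')$, so the remaining rows of the two matrices coincide and the full row spaces agree.
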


The remainder of the paper is purely combinatorial containing an analysis of when edge contractions and spider contractions preserve $3$-sparsity. We conclude the introduction with a brief outline of the proof of Theorem \ref{thm:main}.

We mention only the difficult sufficiency direction. We assume the theorem is false and consider a minimal counterexample $G$. If $G$ can be obtained from an $\mathcal{R}_3$-independent graph $H$ by vertex splitting or spider splitting then $G$ is not a counterexample by Lemma \ref{lem:vsplit}. In order to know that $H$ is $\mathcal{R}_3$-independent we would like to apply Theorem \ref{thm:bounded}. To this end we will only consider `reducible' reductions; that is, reductions that result in $H$ having a vertex of degree 4. Necessarily $H$ will have one vertex of degree greater than 5 so we cannot immediately apply Theorem \ref{thm:bounded}, thus we first prove an extension of that result in Section \ref{sec:bounded} and this suffices to establish $\mathcal{R}_3$-independence if we can show $H$ is 3-sparse. We establish when reducible reductions are possible in Section \ref{sec:contract}. With that in hand we consider when $H$ is 3-sparse.
When it is not the reduction is said to be non-admissible. The `blockers' in $G$ that result in non-admissibility can take several forms as described in Subsection \ref{subsec:structure}. We prove our main result in Section \ref{sec:proof}. First we apply the isostatic substitution principle, Lemma \ref{lem:isosub}, to reduce these possibilities. We then apply the results of Subsection \ref{subsec:structure} to restrict the structural possibilities for $G$. From there we consider a maximal blocker $Y$ and reducible edges or vertex-pairs with one vertex in $Y$ and one not in $Y$. This gives a second blocker $Z$, and from here, we apply a case analysis based on the interaction between $Y$ and $Z$.

\section{Bounded $d$-sparse graphs}
\label{sec:bounded}

We prove a short extension of Theorem \ref{thm:bounded} to allow one vertex of arbitrary degree. A similar extension was obtained in \cite[Theorem 4.5]{JJdress}. In their case the extended family of graphs allowed two vertices of degree greater than 5 but both needed to be adjacent vertices of degree 6.

\begin{theorem}\label{thm:1highdeg}
Let $G=(V,E)$ be a 2-connected graph with a designated vertex $v$. Suppose $G$ has minimum degree at most $d+1$ and is such that every vertex in $V-v$ has degree at most $d+2$. Then $G$ is $\mathcal{R}_d$-independent if and only if $G$ is $d$-sparse.    
\end{theorem}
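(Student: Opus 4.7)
The plan is to prove sufficiency by induction on $|V|$, since necessity is immediate from Lemma \ref{lem:max}. If $\deg_G(v)\le d+2$, then every vertex of $G$ has degree at most $d+2$ and Theorem \ref{thm:bounded} applies directly, so we may assume $\deg_G(v)\ge d+3$. In that case the hypothesised minimum-degree vertex, which has degree at most $d+1$, must lie in $V\setminus\{v\}$; call it $u$. I will reduce $G$ at $u$ to a smaller graph $G'$ satisfying the same hypotheses, apply induction, and then recover $\mathcal{R}_d$-independence of $G$ from that of $G'$ via a standard extension operation.

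If $\deg_G(u)\le d$, I delete $u$. The graph $G-u$ inherits $d$-sparsity, retains the min-degree bound (degrees only drop), retains the max-degree bound on $V\setminus\{v\}$ (no edges are added), and still has $\deg_{G-u}(v)\ge d+2$; so either Theorem \ref{thm:bounded} applies (if $\deg_{G-u}(v)\le d+2$) or the inductive hypothesis does. In either case $G-u$ is $\mathcal{R}_d$-independent, and reattaching $u$ as a vertex of degree at most $d$ is a (sub-)$0$-extension, which preserves $\mathcal{R}_d$-independence. If $\deg_G(u)=d+1$, I instead seek a $1$-reduction: a pair $a,b\in N_G(u)$ with $ab\notin E(G)$ such that $G':=(G-u)+ab$ is $d$-sparse. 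If such a pair exists, then $G'$ inherits all hypotheses ($\deg_{G'}(v)\in\{\deg_G(v)-1,\deg_G(v)\}$ and the other bounds are preserved), so $G'$ is $\mathcal{R}_d$-independent by induction, and $G$ is recovered from $G'$ by a $1$-extension, which is a particular case of a vertex split and so preserves $\mathcal{R}_d$-independence by Lemma \ref{lem:vsplit}.

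The main obstacle is showing that an admissible $1$-reduction always exists at $u$. First, $N_G(u)$ cannot be a clique, for then $\{u\}\cup N_G(u)$ would induce a $K_{d+2}$, which fails $d$-sparsity by a short calculation. Hence non-adjacent pairs exist in $N_G(u)$. If every non-adjacent pair $a,b$ is blocked by a $d$-tight set containing both $a$ and $b$, I intend to follow the overlapping-tight-set argument used by Jackson and Jord\'an in their proof of Theorem \ref{thm:bounded}: successively merging overlapping tight blockers (via the submodular inequality for the function $X\mapsto d|X|-\binom{d+1}{2}-i_G(X)$) forces a single tight set that contains all of $N_G(u)$, which together with $\deg_G(u)=d+1$ contradicts $d$-sparsity at that set. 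The only new feature compared to the setting of Theorem \ref{thm:bounded} is that a blocking tight set may contain $v$, but the inequality $i_G(X)\le d|X|-\binom{d+1}{2}$ constrains only the total edge count in $X$ and is insensitive to the individual degrees of vertices in $X$, so the counting goes through unchanged. A final bookkeeping point is that the reductions may disturb $2$-connectivity, but $\mathcal{R}_d$-independence is preserved under $1$-sums at cut vertices and blocks are $2$-connected by definition, so I can always apply the inductive hypothesis block-by-block.
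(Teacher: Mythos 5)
Your overall strategy (induct on $|V|$, perform a $0$- or $1$-reduction at a minimum-degree vertex $u\neq v$, and use the fact that at most one neighbour of $u$ can have degree $\geq d+3$ to find a sparsity-preserving $1$-reduction) is the same as the paper's, and the core of that step is exactly Lemma \ref{lem:reduclem}, which you could cite directly since the set $V'$ of high-degree neighbours of $u$ is a single vertex or empty and hence trivially complete. Two smaller inaccuracies along the way: a $1$-extension is not a special case of a vertex split (the new vertex and the split vertex need not share $d-1$ common neighbours), so you should invoke Lemma \ref{lem:01ext} rather than Lemma \ref{lem:vsplit}; and your claim that the Jackson--Jord\'an blocker-merging argument is ``insensitive to the individual degrees of vertices'' is not right --- their argument uses the degree bound on the \emph{neighbours of $u$} to force overlapping tight sets to intersect in at least $d$ vertices, which is precisely why the lemma carries the hypothesis that the high-degree neighbours form a clique. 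Here that hypothesis happens to hold, so your conclusion survives, but for the reason encoded in Lemma \ref{lem:reduclem}, not the one you give.

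The genuine gap is your final ``bookkeeping point'' about $2$-connectivity, which is in fact where the paper does most of its work. After deleting $u$ (and possibly adding $ab$), the graph $G'$ may fail to be $2$-connected, and you propose to apply the inductive hypothesis block-by-block. But a block of $G'$ need not satisfy the minimum-degree hypothesis of the theorem: a leaf block whose unique cut vertex is $v$ and which contains none of the neighbours of $u$ whose degrees dropped could have every vertex of degree at least $d+2$ (for $d=3$ such a block could contain, say, a $5$-regular $3$-sparse subgraph on $12$ or more vertices --- the very graphs this paper is about), and then neither the inductive hypothesis nor Theorem \ref{thm:bounded} applies to it. The paper avoids this by testing $2$-connectivity of $G-u$ \emph{before} reducing: if $G-u$ is not $2$-connected, then $\{u,w\}$ is a $2$-separation of $G$ and one argues on the pieces $G[V_i]+uw$ directly, using Lemma \ref{lem:intbridge}(\ref{it:intbridge:indep}) to glue across the rigid intersection $K_2$, and --- in the delicate subcase where $u$ has exactly one neighbour on one side, so no $1$-reduction stays within a side --- the rank formula of Lemma \ref{lem:intbridge}(\ref{it:intbridge:rank}) to show the crossing edge $xy$ added by the $1$-reduction increases the rank. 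Your proposal contains no substitute for this analysis, so the inductive step is not complete as written.
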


Notice that the theorem is best possible in the following sense. There exists graphs with minimum degree $d+1$ and two vertices of degree $d+3$ that are $d$-sparse but not $\mathcal{R}_d$-independent. The smallest such example \cite{GGJN,Jor} is the 2-sum of two copies of $K_5$ (i.e. two copies of $K_5$ glued together along an edge with the common edge deleted from both). This graph is 3-sparse but $\mathcal{R}_3$-dependent.

The technical tools in the proof of Theorem \ref{thm:1highdeg} are the following well used extension operations.

 A graph $G'$ is said to be obtained from another graph $G$ by: a \emph{($d$-dimensional) 0-extension}
if $G=G'-v$ for a vertex  $v\in V(G')$ with $d_{G'}(v)=d$; or a \emph{($d$-dimensional) 1-extension} if $G=G'-v+xy$ for a vertex $v\in V(G')$ with $d_{G'}(v)=d+1$ and $x,y\in N_{G'}(v)$ with $x,y$ non-adjacent.
The inverse operations of 0-extension and 1-extension are called \emph{0-reduction} and \emph{1-reduction}, respectively.

\begin{lemma}[{\cite[Lemma 11.1.1, Theorem 11.1.7]{Wlong}}]\label{lem:01ext}
Let $G$ be $\mathcal{R}_d$-independent and let $G'$ be obtained from $G$ by a 0-extension or a 1-extension. Then $G'$ is $\mathcal{R}_d$-independent.
\end{lemma}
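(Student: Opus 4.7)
The necessity direction is immediate from Maxwell's Lemma \ref{lem:max}, so the task is sufficiency. My plan is induction on $|V|$, with Theorem \ref{thm:bounded} supplying the crux of the base step and the 0-/1-reductions of Lemma \ref{lem:01ext} driving the inductive step. If the designated vertex $v$ already satisfies $d_G(v)\leq d+2$, then every vertex of $G$ has degree at most $d+2$ while the minimum degree is at most $d+1$, so (using that 2-connectivity is stronger than connectivity) Theorem \ref{thm:bounded} applies verbatim and we are done. Thus we may assume $d_G(v)\geq d+3$, which forces the minimum-degree witness $u$ to satisfy $u\neq v$ and $d_G(u)\in\{d,d+1\}$. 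Note we also have the freedom to pick among several such $u$ if they exist.

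The rest of the argument seeks an admissible 0- or 1-reduction at $u$ (where admissible means that the resulting graph $G'$ is still $d$-sparse), applies the inductive hypothesis to $G'$, and then uses Lemma \ref{lem:01ext} to transfer $\mathcal{R}_d$-independence back to $G$. Since neither a 0- nor a 1-reduction increases the degree of any vertex, the condition that every vertex of $V(G')\setminus\{v\}$ has degree at most $d+2$ is automatic; the minimum-degree condition is preserved in all but trivially small cases, which can be inspected directly.

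The real obstacle is the existence of an admissible reduction at $u$, and I would adapt the pair-counting used by Jackson and Jord\'an for Theorem \ref{thm:bounded}. If $d_G(u)=d$ then the 0-reduction at $u$ is admissible unless $N_G(u)$ is contained in some $d$-tight induced subgraph $X$, in which case $X\cup\{u\}$ already violates $d$-sparsity of $G$, a contradiction. If $d_G(u)=d+1$ then a 1-reduction requires a non-adjacent pair $x,y\in N_G(u)$ avoiding every $d$-tight induced subgraph of $G-u$; supposing for contradiction that each such candidate pair lies in a tight blocker $X_{xy}$, one applies the standard submodular inequality for $d$-sparse graphs (the union of two $d$-tight sets with sufficiently large intersection is itself $d$-tight) to merge these blockers into a single tight set spanning $N_G(u)$, which together with $u$ again contradicts $d$-sparsity. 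The crucial point that lets this transfer from Theorem \ref{thm:bounded} to our extension is that the degree bound is only invoked at neighbours of $u$, never at $v$: even if $v\in N_G(u)$, there are still $\binom{d}{2}$ candidate pairs inside $N_G(u)\setminus\{v\}$ on which to run the blocker count, and the high degree of $v$ only shows up as external edges leaving any tight blocker, which does not affect the submodular bookkeeping.

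A secondary technicality is preserving 2-connectivity of $G'$. If a chosen admissible reduction creates a cut-vertex, one either reroutes using a different admissible pair or observes that $G$ must already contain a small separator, which can be checked to be incompatible with the hypotheses via a direct 2-sum argument. I expect this 2-connectivity bookkeeping to be routine compared with the blocker analysis, which is where the main work of the proof lies.
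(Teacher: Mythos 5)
Your proposal does not address the statement it is supposed to prove. Lemma \ref{lem:01ext} asserts that the 0-extension and 1-extension operations preserve $\mathcal{R}_d$-independence; it is a one-directional, geometric statement about the rigidity matrix, cited in the paper from Whiteley \cite[Lemma 11.1.1, Theorem 11.1.7]{Wlong} and given without proof. What you have written is instead an outline of a proof of Theorem \ref{thm:1highdeg} (the extension of Theorem \ref{thm:bounded} allowing one designated vertex $v$ of arbitrary degree): the opening appeal to Maxwell's Lemma for ``necessity'', the minimum-degree vertex $u$, the admissible 0-/1-reductions, the blocker analysis, and the 2-connectivity bookkeeping all belong to that combinatorial induction. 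Worse, your argument explicitly invokes Lemma \ref{lem:01ext} to ``transfer $\mathcal{R}_d$-independence back to $G$'', so read as a proof of Lemma \ref{lem:01ext} it is circular.

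A genuine proof of Lemma \ref{lem:01ext} must work directly with the matrix $R_d(G',p)$. For a 0-extension one places the new vertex $v$ generically and observes that the $d$ new rows are the only ones with nonzero entries in the $d$ columns of $v$; the relevant $d\times d$ block, whose rows are $p(v)-p(u_i)$ for the $d$ neighbours $u_i$, is generically nonsingular, so the new rows are independent of the old ones and of each other. For a 1-extension one deletes $xy$, adds $v$ adjacent to $x,y$ and $d-1$ further vertices, places $v$ on the line through $p(x)$ and $p(y)$ so that the rows for $vx$ and $vy$ span the row for $xy$ together with one new direction, checks independence at this special position, and concludes for generic positions by lower semicontinuity of rank. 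None of the sparsity-counting or blocker machinery in your proposal is relevant to this.
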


\begin{lemma}[{\cite[Lemma 3.3]{JJbounded}}]
\label{lem:reduclem}
Let $G=(V,E)$ be a $d$-sparse graph. Let $v$ be a vertex of degree $d+1$ in $G$, and $V' = \{x\in N(v):d_G(x)\geq d+3\}$. Suppose that $G[V']$ is a (possibly empty) complete graph. Then there is a 1-reduction at $v$ resulting in a smaller $d$-sparse graph.
\end{lemma}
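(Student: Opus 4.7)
The plan is to prove the lemma by contradiction, combining a direct edge count with a submodularity argument on \emph{blockers}.

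First I would establish that a non-adjacent pair in $N(v)$ exists. If $N(v)$ induced a clique then $N(v)\cup\{v\}$ would span $K_{d+2}$, giving
\[
i_G(N(v)\cup\{v\})=\binom{d+2}{2}=d(d+2)-\binom{d+1}{2}+1,
\]
contradicting $d$-sparsity. So non-adjacent $x,y\in N(v)$ exist. For such a pair the 1-reduction $G':=G-v+xy$ is $d$-sparse unless there is a set $X\subseteq V\setminus\{v\}$ with $\{x,y\}\subseteq X$, $xy\notin E(G)$ and $i_G(X)=d|X|-\binom{d+1}{2}$; I call such an $X$ a \emph{blocker} for $\{x,y\}$. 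Applying $d$-sparsity to $X\cup\{v\}$ yields the useful bound $|X\cap N(v)|\leq d$, so every blocker misses at least one neighbour of $v$.

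Now suppose for contradiction that every non-adjacent pair in $N(v)$ admits a blocker. The main tool is submodularity of $f(X)=d|X|-i_G(X)$: if $X_1,X_2$ are tight with $|X_1\cap X_2|\geq d$, then both $X_1\cup X_2$ and $X_1\cap X_2$ are tight. I would choose a blocker $X$ maximising $k:=|X\cap N(v)|$ and a vertex $z\in N(v)\setminus X$. If $k=d$ and $z$ is adjacent to every vertex of $X\cap N(v)$, then counting edges in $X\cup\{v,z\}$ exceeds the sparsity bound by one, an immediate contradiction. For $k<d$, if $z$ is non-adjacent to some $w\in X\cap N(v)$, I would take a blocker $Y$ for $\{z,w\}$ (chosen to maximise $|Y\cap N(v)|$) and show $|X\cap Y|\geq d$; then $X\cup Y$ is a blocker with strictly more neighbours of $v$ than $X$, contradicting maximality of $k$.

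The remaining case --- $k<d$ and every neighbour of $v$ outside $X$ adjacent to every neighbour of $v$ inside $X$ --- is where the hypothesis that $G[V']$ is a clique is used. Since each low-degree neighbour of $v$ has degree at most $d+2$, it contributes only boundedly many edges to augmentations of $X$; combining this with the clique structure on $V'$, an edge count on $X\cup\{v\}\cup(N(v)\setminus X)$ produces the required sparsity violation. The main obstacle is securing the intersection bound $|X\cap Y|\geq d$ that allows two blockers to be merged via submodularity: the clique hypothesis on $V'$ controls the adjacency pattern among high-degree neighbours of $v$ in precisely the right way, so that either the merge succeeds or the direct counting contradiction closes.
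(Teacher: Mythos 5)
First, note that the paper you are working from does not prove this lemma at all: it is quoted verbatim from Jackson and Jord\'an \cite{JJbounded}, so the comparison below is against the argument in that reference rather than against anything in this manuscript. Your general framework is the right one and matches theirs: non-adjacent pairs exist in $N(v)$ because $K_{d+2}$ violates $d$-sparsity by exactly one edge; a failed $1$-reduction at $\{x,y\}$ is witnessed by a tight (``critical'') set $X$ with $x,y\in X$, $v\notin X$; adding $v$ to a tight set shows $|X\cap N(v)|\leq d$; and tight sets whose intersection has at least $d$ vertices have tight union and intersection. All of these preliminary steps are correct.

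The genuine gap is the step you yourself flag: establishing $|X\cap Y|\geq d$ before merging two blockers. As written, you merge $X$ and $Y$ through an arbitrary common vertex $w\in N(v)$, and there is no reason two tight sets sharing a single vertex should intersect in $d$ vertices; the clique hypothesis on $V'$ does not ``control the adjacency pattern'' in any way you have made precise. The missing idea, which is the heart of Jackson and Jord\'an's proof, is a \emph{degree pivot}: because $G[V']$ is complete, every non-adjacent pair in $N(v)$ contains a vertex $u$ with $d_G(u)\leq d+2$. Any tight set of size at least $d+1$ has minimum internal degree at least $d$ (deleting a vertex of smaller degree would violate sparsity on the remainder), so if such a low-degree $u$ lies in two tight sets $X,Y$ with $v\notin X\cup Y$, then $u$ has at least $d$ neighbours in each of $X$ and $Y$ but at most $d+1$ neighbours in $X\cup Y$ (since $v$ is a neighbour outside both); hence $X$ and $Y$ share at least $d-1$ neighbours of $u$, and together with $u$ itself $|X\cap Y|\geq d$. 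Your argument never routes the merge through such a low-degree vertex, so the submodularity step does not go through. Relatedly, your ``remaining case'' does not close by counting: with $k=|X\cap N(v)|<d$ and only the edges you have guaranteed, $i\bigl(X\cup\{v\}\cup(N(v)\setminus X)\bigr)$ falls short of the bound $d|X\cup\{v\}\cup(N(v)\setminus X)|-\binom{d+1}{2}$ by $(d-k)(d+1-k)-1\geq 1$, so no contradiction arises without the extra structure that the degree pivot provides (it forces $k=d$ and pins down the adjacencies of the one excluded neighbour). Until the intersection bound is derived from the bounded-degree vertex supplied by the hypothesis on $V'$, the proof is incomplete.
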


We will need one more known result.

\begin{lemma}[{\cite[Lemma 11.1.9]{Wlong}}]\label{lem:intbridge}
Let $G_1$, $G_2$ be subgraphs of a graph $G$ and suppose that $G=G_1\cup G_2$.
\begin{enumerate}
\item\label{it:intbridge:indep} 
	If $G_1\cap  G_2$ is $\mathcal{R}_d$-rigid and $G_1,G_2$ are $\mathcal{R}_d$-independent then $G$ is $\mathcal{R}_d$-independent.
\item\label{it:intbridge:rank} 
	If $|V(G_1)\cap V(G_2)| \leq d-1$, $u\in V(G_1)\setminus V(G_2)$ and $v\in V(G_2)\setminus V(G_1)$ then
	$r_d(G+uv)=r_d(G)+1$.
\end{enumerate}
\end{lemma}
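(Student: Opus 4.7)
The plan is to analyse both statements through the kernel of the rigidity matrix. Fix a generic realization $p$; for a subgraph $H$ of $G$ write $Z(H)=\ker R_d(H,p)$, so that $\dim Z(H)=d|V(H)|-\operatorname{rank}R_d(H,p)$. An infinitesimal motion of $G$ restricts to motions of $G_1$ and $G_2$ that agree on $V(G_1\cap G_2)$, and conversely any such agreeing pair glues to a motion of $G$. Thus $Z(G)$ is the equalizer of the two restriction maps $\pi_i\colon Z(G_i)\to Z(G_1\cap G_2)$.

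For part \ref{it:intbridge:indep}, the key observation is that each $\pi_i$ contains the restrictions of the infinitesimal Euclidean isometries of $\mathbb{R}^d$ (these are motions of any graph). Since $G_1\cap G_2$ is $\mathcal{R}_d$-rigid, $Z(G_1\cap G_2)$ coincides exactly with this space of restricted isometries, so both $\pi_i$ surject onto $Z(G_1\cap G_2)$. The fiber-product dimension formula then gives
$$\dim Z(G)=\dim Z(G_1)+\dim Z(G_2)-\dim Z(G_1\cap G_2).$$
Substituting $\dim Z(G_i)=d|V(G_i)|-|E(G_i)|$ from independence of $G_i$, and $\dim Z(G_1\cap G_2)=d|V(G_1\cap G_2)|-|E(G_1\cap G_2)|$ from independence of $G_1\cap G_2\subseteq G_1$, the right-hand side collapses by inclusion-exclusion on vertices and edges to $d|V(G)|-|E(G)|$. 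Hence $\operatorname{rank}R_d(G,p)=|E(G)|$, so $G$ is $\mathcal{R}_d$-independent.

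For part \ref{it:intbridge:rank}, I would exhibit an explicit motion of $G$ that violates the constraint for $uv$. Since $|V(G_1\cap G_2)|\leq d-1$, at generic $p$ the space of infinitesimal isometries $\alpha$ of $\mathbb{R}^d$ with $\alpha(p(x))=0$ for every $x\in V(G_1\cap G_2)$ has dimension $\binom{d-|V(G_1\cap G_2)|+1}{2}\geq 1$. Define $m\colon V(G)\to\mathbb{R}^d$ by $m(w)=0$ for $w\in V(G_1)$ and $m(w)=\alpha(p(w))$ for $w\in V(G_2)$; the two definitions agree on $V(G_1\cap G_2)$ by the choice of $\alpha$, so $m$ is well-defined. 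Every edge of $G_1$ is satisfied because $m$ vanishes on its endpoints, and every edge of $G_2$ is satisfied because $\alpha$ is an infinitesimal isometry, so $m\in Z(G)$. Finally $(m(u)-m(v))\cdot(p(u)-p(v))=-\alpha(p(v))\cdot(p(u)-p(v))$, and the set $\{\alpha(p(v)):\alpha\text{ fixes } p(V(G_1\cap G_2))\}$ is the tangent space at $p(v)$ to the orbit of $p(v)$ under the rotational stabiliser of $p(V(G_1\cap G_2))$; for $v\notin V(G_1\cap G_2)$ and generic $p$ this orbit is a non-degenerate sphere whose tangent space is not orthogonal to the generic vector $p(u)-p(v)$. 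Some $\alpha$ thus makes the inner product nonzero, proving $r_d(G+uv)=r_d(G)+1$.

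The main subtlety lies in part \ref{it:intbridge:indep}: equating $Z(G_1\cap G_2)$ with the restricted Euclidean isometries is immediate from the definition of $d$-rigidity when $|V(G_1\cap G_2)|\geq d$, but requires a brief separate check when $|V(G_1\cap G_2)|<d$, since $d$-rigidity then forces $G_1\cap G_2=K_k$ on $k$ generic (hence affinely independent) points, and one has to verify that $\dim Z(K_k)=dk-\binom{k}{2}$ indeed matches the dimension of the restriction of $\mathbb{R}^d$-isometries to those $k$ points, so that the fiber-product count still goes through.
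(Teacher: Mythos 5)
The paper does not prove this lemma at all---it is quoted verbatim from Whiteley \cite[Lemma 11.1.9]{Wlong}---so there is no in-paper argument to compare against. Your proof is correct and is essentially the standard argument for this gluing lemma: part (1) via the fiber-product (equalizer) description of $\ker R_d(G,p)$ together with surjectivity of the restriction maps onto the trivial motions of the rigid intersection, and part (2) via an explicit flex built from an infinitesimal isometry fixing the at most $d-1$ shared vertices; you also correctly flag and resolve the small-intersection case $|V(G_1\cap G_2)|<d$ in part (1), where rigidity forces completeness and the dimension count $dk-\binom{k}{2}=\binom{d+1}{2}-\binom{d-k+1}{2}$ must be checked by hand.
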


\begin{proof}[Proof of Theorem \ref{thm:1highdeg}]
Lemma \ref{lem:max} gives the necessity.
Conversely, suppose $G$ is $d$-sparse and let $u$ be a vertex of minimum degree. Suppose that $G-u$ is not 2-connected. Then there exists a 2-vertex-separation $\{u,w\}\subset V$ in $G$. Let $V=V_1\cup V_2$ where $V_1\cap V_2=\{u,w\}$ and $G[V_i]$ is connected for $i=1,2$. Put $G_i=G[V_i]+uw$ for $i=1,2$. Since $d_G(u)\leq d+1$ and $\{u,w\}$ is a 2-vertex-separation of $G$, we have $d_{G_i}(u)\leq d+1$. 
If $G[V_i-u]$ is 2-connected, then since $G[V_i-u]$ is $d$-sparse and has minimum degree at most $d+1$, at most one arbitrary degree vertex $v$ and $d_{(G[V_i-u])}(t)\leq d+2$ for all $t\in V_i\setminus \{u,v\}$, by induction, $G[V_i-u]$ is $\mathcal{R}_d$-independent for $i=1,2$. If $G[V_i-u]$ is not 2-connected then $G[V_i-u]$ is constructed by gluing 2-connected components at a vertex. Each of these such components fulfills the degree conditions above and thus is $\mathcal{R}_d$-independent by induction. Hence, by Lemma \ref{lem:intbridge}(\ref{it:intbridge:indep}), $G[V_i-u]$ is $\mathcal{R}_d$-independent for $i=1,2$.
Since $u$ has degree at most $d+1$ we can use Lemma \ref{lem:01ext} to show that $G_i$ is $\mathcal{R}_d$-independent for $i=1,2$ unless $u$ has degree exactly $d+1$ and $u$ has exactly 1 neighbour in $V_i-u$ for some $1\leq i \leq 2$. In the former case, let $G'=G+uw$. Since $G_1\cap G_2=K_2$, which is $\mathcal{R}_d$-rigid, Lemma \ref{lem:intbridge}(\ref{it:intbridge:indep}) implies that $G'$ and hence $G$ are $\mathcal{R}_d$-independent. So we may now assume that $u$ has degree exactly $d+1$ and $u$ has exactly 1 neighbour in $V_2-u$. Then we can apply a 1-reduction at $u$, adding an edge $xy$ with $x\in N(u)\cap V_1-u$ and $y\in N(u) \cap V_2-u$ to form a new graph $H$. Lemma \ref{lem:intbridge}(\ref{it:intbridge:rank}) and induction imply that $G-u+xy$ is $\mathcal{R}_d$-independent and Lemma \ref{lem:01ext} implies that $G$ is $\mathcal{R}_d$-independent.

Hence we may assume $G-u$ is 2-connected. Since $G$ is 2-connected, $d(u)\geq 2$ and so $G-u$ has minimum degree at most $d+1$. Since $G-u$ is evidently $d$-sparse, if $d(u)\leq d$ then it follows by induction that $G-u$ is $\mathcal{R}_d$-independent and hence from Lemma \ref{lem:01ext} that $G$ is $\mathcal{R}_d$-independent. Thus $d(u)=d+1$. By the hypotheses, $|\{x\in N(u):d_G(x)\geq d+3\}|\leq 1$. Hence Lemma \ref{lem:reduclem} implies that there exist $x,y\in N(u)$ such that $G-u+xy$ is $d$-sparse. Since the degree hypotheses hold for $G-u+xy$, by induction $G-u+xy$ is $\mathcal{R}_d$-independent and hence Lemma \ref{lem:01ext} implies that $G$ is $\mathcal{R}_d$-independent.
\end{proof}

We conclude this section with two more results that utilise Lemma \ref{lem:intbridge}. The first follows from Theorem \ref{thm:bounded}.

\begin{corollary}\label{cor:circuit}
Let $G$ be connected, $(d+2)$-regular, $d$-sparse and not $\mathcal{R}_d$-independent. Then $G$ is a $\mathcal{R}_d$-circuit. 
\end{corollary}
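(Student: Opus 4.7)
The plan is to verify the circuit definition edge-by-edge: fix any $e=uv\in E(G)$ and show that $G-e$ is $\mathcal{R}_d$-independent. Removing $e$ preserves $d$-sparsity, lowers both $d_G(u)$ and $d_G(v)$ to $d+1$, and leaves every other vertex with degree $d+2$. Hence $G-e$ has minimum degree at most $d+1$ and maximum degree at most $d+2$. If $G-e$ is connected, Theorem \ref{thm:bounded} applies directly and gives that $G-e$ is $\mathcal{R}_d$-independent, which is what we want.

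The content of the argument is in ruling out the possibility that $e$ is a bridge. Suppose it were; then write $G-e=G_1\cup G_2$ with $u\in V(G_1)$, $v\in V(G_2)$, and $V(G_1)\cap V(G_2)=\emptyset$. Each $G_i$ is connected, inherits $d$-sparsity from $G$, has exactly one vertex of degree $d+1$ (the endpoint of $e$ lying in $G_i$), and every other vertex of degree $d+2$, so Theorem \ref{thm:bounded} applies to each $G_i$, yielding $\mathcal{R}_d$-independence. Since $|V(G_1)\cap V(G_2)|=0\leq d-1$, Lemma \ref{lem:intbridge}(\ref{it:intbridge:rank}) then gives
\[
r_d(G)=r_d(G_1\cup G_2)+1=|E(G_1)|+|E(G_2)|+1=|E(G)|,
\]
so $G$ itself would be $\mathcal{R}_d$-independent, contradicting the hypothesis. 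Thus no edge of $G$ is a bridge, $G-e$ is always connected, and the corollary follows.

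The only point requiring care is the bridge case, where one must confirm that the two components each slot cleanly into the hypotheses of Theorem \ref{thm:bounded} (a single bridge only drops the degree of its own two endpoints, one in each component) and that the intersection bound in Lemma \ref{lem:intbridge}(\ref{it:intbridge:rank}) is available, which it is for $d\geq 1$. Both checks are automatic, so the corollary reduces to a short case split on the connectivity of $G-e$.
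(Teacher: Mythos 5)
Your proof is correct and follows essentially the same route as the paper: split on whether $e$ is a bridge, apply Theorem \ref{thm:bounded} to $G-e$ (or to the two components) and use Lemma \ref{lem:intbridge} to derive a contradiction in the bridge case. Your invocation of Lemma \ref{lem:intbridge}(\ref{it:intbridge:rank}) to reattach the bridge is in fact slightly cleaner than the paper's citation of part (\ref{it:intbridge:indep}) there, and your direct conclusion that $G$ is a circuit (dependent, with every $G-e$ independent) avoids the paper's slightly roundabout final step about the unique circuit $C$.
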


\begin{proof}
Let $e=uv \in E(G)$. Suppose $G-e$ is disconnected and let $H_1,H_2$ be the connected components of $G-e$. Since $G$ is $d$-sparse and $(d+2)$-regular, $H_1$ and $H_2$ are $d$-sparse with minimum degree at most $d+1$ and maximum degree at most $d+2$. Theorem \ref{thm:bounded} now implies that $H_1$ and $H_2$ are $\mathcal{R}_d$-independent. Then Lemma \ref{lem:intbridge} (\ref{it:intbridge:indep}) implies that $G$ is $\mathcal{R}_d$-independent. Hence we may suppose that $G-e$ is connected for all $e\in E(G)$. Then $G-e$ is $\mathcal{R}_d$-independent by Theorem \ref{thm:bounded}, so $G$ contains a unique $\mathcal{R}_d$-circuit $C$ and $C$ is connected. Suppose $C \subsetneq G$. Then $C$ has maximum degree at most $d+2$ and minimum degree at most $d+1$ and $C$ is $d$-sparse. This contradicts Theorem \ref{thm:bounded}. Hence $G$ is a $\mathcal{R}_d$-circuit.
\end{proof}

Theorem \ref{thm:main} will imply that no graph satisfies the hypothesis of the corollary when $d=3$. We have already observed that when $d\geq 4$ the corresponding class of graphs is known to be non-empty.

Let $G=(V,E)$ and take $X,Y\subset V$. We say that $d(X,Y)$ denotes the number of edges $e\in E$ with $e=xy$, $x\in X\setminus Y$ and $y\in Y\setminus X$. 

\begin{lemma}\label{lem:connected}
Let $G=(V,E)$ be a $\mathcal{R}_d$-circuit. Then $G-e$ is 2-connected for all $e\in E$.
\end{lemma}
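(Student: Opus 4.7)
The plan is to proceed in two steps: first show that $G$ itself is 2-connected, and then deduce that no $G-e$ can have a cut vertex either. Throughout I implicitly assume $d\geq 2$, which is the setting relevant to this paper (for $d=1$ the conclusion fails since a cycle minus an edge is a path).

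\textbf{Step 1: $G$ is 2-connected.} Suppose for contradiction that $G$ has a cut vertex $v$ and write $G=G_1\cup G_2$ with $V(G_1)\cap V(G_2)=\{v\}$. For a generic realisation $p$, an infinitesimal flex of $G$ is exactly a pair of infinitesimal flexes $(\mu_1,\mu_2)$ of $G_1$ and $G_2$ agreeing at $v$. Since the $d$-dimensional space of translations lies in each $\ker R_d(G_i)$ and maps onto $\mathbb{R}^d$ under $\mu_i\mapsto \mu_i(v)$, the restriction maps are both surjective, so a fibre-product dimension count yields $\dim\ker R_d(G)=\dim\ker R_d(G_1)+\dim\ker R_d(G_2)-d$. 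Combining this with rank-nullity and $|V(G)|=|V(G_1)|+|V(G_2)|-1$ gives $r_d(G)=r_d(G_1)+r_d(G_2)$, so $|E(G)|-r_d(G)=\sum_i(|E(G_i)|-r_d(G_i))$. Since $G$ is dependent, some $G_i$ already contains a dependency; then removing any edge from the opposite side preserves this dependency, contradicting the circuit property of $G$.

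\textbf{Step 2: $G-e$ is 2-connected.} Fix $e=xy\in E(G)$. By Step~1, $G-e$ is connected, so suppose $v$ is a cut vertex of $G-e$ and write $G-e=H_1\cup H_2$ with $V(H_1)\cap V(H_2)=\{v\}$. If $\{x,y\}\subseteq V(H_i)$ for some $i$, or if $v\in\{x,y\}$, then $v$ would still be a cut vertex of $G$, contradicting Step~1. Hence $e$ has one endpoint in $V(H_1)\setminus\{v\}$ and the other in $V(H_2)\setminus\{v\}$. Since $|V(H_1)\cap V(H_2)|=1\leq d-1$, Lemma~\ref{lem:intbridge}(\ref{it:intbridge:rank}) applies to the decomposition $G-e=H_1\cup H_2$ with the added edge $xy$, giving
\[
r_d(G)=r_d\bigl((G-e)+xy\bigr)=r_d(G-e)+1=|E(G)|.
\]
Thus $G$ is $\mathcal{R}_d$-independent, contradicting that $G$ is an $\mathcal{R}_d$-circuit.

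\textbf{Main obstacle.} The substantive content is Step~1: establishing the rank-additivity identity at a cut vertex. Step~2 is a direct application of the already-stated Lemma~\ref{lem:intbridge}(\ref{it:intbridge:rank}) after a short case analysis ruling out that $e$ be internal to one side. The Step~1 identity is standard rigidity-theoretic folklore, but it requires the right algebraic formulation — identifying the flex space of $G$ as a fibre product and using translations to certify surjectivity of the restriction-at-$v$ maps — and this is the step most likely to demand careful writing in a complete proof.
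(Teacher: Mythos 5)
Your proof is correct and follows the same two-step structure as the paper: first show $G$ is 2-connected, then handle a hypothetical cut vertex of $G-e$ via Lemma \ref{lem:intbridge}(\ref{it:intbridge:rank}). Your Step 2 is essentially identical to the paper's (the paper leaves implicit the observation, which you make explicit, that $e$ must straddle the cut since otherwise $v$ would already separate $G$). The one place you diverge is Step 1, which you identify as the main obstacle and prove from scratch via a fibre-product/kernel-dimension count. The paper dispatches this in one line: writing $G=G_1\cup G_2$ with $G_1\cap G_2=K_1$, it notes that $K_1$ is $\mathcal{R}_d$-rigid and that $G_1,G_2$, being proper subgraphs of a circuit, are $\mathcal{R}_d$-independent, so Lemma \ref{lem:intbridge}(\ref{it:intbridge:indep}) forces $G$ to be independent --- a contradiction. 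Your rank-additivity identity $r_d(G)=r_d(G_1)+r_d(G_2)$ is exactly the content of that lemma in the special case of a one-vertex intersection, so you have in effect reproved a tool that was already on the table; the argument you give is sound (translations do certify surjectivity of evaluation at $v$, and the corank is then additive), but in the context of this paper it is unnecessary work. A small additional point in your favour: you flag the requirement $d\geq 2$ needed for $|V(G_1)\cap V(G_2)|=1\leq d-1$ in Step 2, which the paper does not mention.
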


\begin{proof}
Suppose first that $G$ is not 2-connected. 
Since $G$ is a $\mathcal{R}_d$-circuit it is connected, so we may write $G=G_1\cup G_2$ where $G_1\cap G_2=K_1$. Lemma \ref{lem:intbridge} (\ref{it:intbridge:indep}) now contradicts the hypothesis that $G$ is a $\mathcal{R}_d$-circuit.
Hence $G$ is 2-connected. 
Suppose there exists $e\in E$ so that $G-e$ is not 2-connected. Then $G-e$ contains a cut-vertex $x$ and $G-e=G_1\cup G_2$ where $V(G_1)\cap V(G_2)=\{x\}$ and $d(V(G_1),V(G_2))=0$. Lemma \ref{lem:intbridge} part \ref{it:intbridge:rank} implies that $r_d(G-e+e)=r_d(G-e)+1$, contradicting the fact that $G$ is a $\mathcal{R}_d$-circuit.
\end{proof}

\section{Contractions in $3$-sparse graphs}
\label{sec:contract}

For the rest of the paper we focus on the case when $d=3$.

Given a 5-regular, $3$-sparse graph we now seek to determine when we can apply an edge contraction or spider contraction. For brevity we will say a \emph{vertex-pair} is a pair of vertices that are non-adjacent.
We say that an edge (resp. vertex-pair) is \emph{reducible} if it is in at least one and at most two triangles (resp. has at least one common neighbour and at most three common neighbours). Secondly, in a 3-sparse graph, we say that a reducible edge (resp. vertex-pair) is \emph{admissible} if the result of an edge contraction (resp. spider contraction) is 3-sparse.

\subsection{Reducibility}

Let $K_{6,6}^-$ denote the unique graph obtained from $K_{6,6}$ by deleting a perfect matching. Note that this graph is 5-regular, 3-sparse, triangle-free and every vertex-pair has either no common neighbours or four common neighbours. The next lemma shows that this graph is unique in this sense.

For a pair of vertices $a,b$ let $N_{a,b}:=N(a)\cap N(b)$ be the set of common neighbours.

\begin{lemma}\label{lem:reducible}
Suppose $G$ is connected, 5-regular, 3-sparse and distinct from $K_{6,6}^-$. Then $G$ has a reducible edge or vertex-pair.
\end{lemma}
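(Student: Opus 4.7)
The plan is to prove the contrapositive: assume $G$ has no reducible edge and no reducible vertex-pair, and deduce $G = K_{6,6}^-$. The hypothesis means every edge of $G$ lies in $0$, $3$, or $4$ triangles, and every non-adjacent pair has $0$, $4$, or $5$ common neighbours.

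The first step is to show $G$ must be triangle-free. If some edge $uv$ lies in $4$ triangles then $S := N(u)\setminus\{v\} = N(v)\setminus\{u\}$ has size $4$, and applying the hypothesis to each $us_i$ gives $|N(s_i)\cap S|\ge 2$, hence $|E(G[S])|\ge 4$; but $3$-sparsity on the $6$-set $\{u,v\}\cup S$ leaves room for only $\le 3$ internal edges in $S$, a contradiction. If $uv$ lies in exactly $3$ triangles, with $N(u)\cap N(v) = \{s_1,s_2,s_3\}$ and unique extra neighbours $u'$ of $u$ and $v'$ of $v$, I would combine the triangle counts at the edges $us_i$, $vs_i$ with the hypothesis applied to the non-adjacent pairs $(u,v')$ and $(v,u')$ to derive lower bounds on $p := |\{i : s_iu'\in E\}|$ and $q := |\{i : s_iv'\in E\}|$ that conflict with the $3$-sparsity bound of $15$ on $\{u,v,s_1,s_2,s_3,u',v'\}$, after splitting on $|E(G[S])|\in\{2,3\}$.

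Now $G$ is triangle-free. Fix $v$ with $N(v) = \{u_1,\ldots,u_5\}$; the $u_i$'s are pairwise non-adjacent and each pair shares $v$, so the hypothesis yields $|N(u_i)\cap N(u_j)|\in\{4,5\}$. Setting $X_i := N(u_i)\setminus\{v\}$ gives five $4$-sets with pairwise intersection of size $3$ or $4$. Writing $X_1 = \{a,b,c,d\}$ and $X_i = (X_1\setminus\{z_i\})\cup\{w_i\}$ with $z_i\in X_1$, $w_i\notin X_1$, for $i\ge 2$, the condition $|X_i\cap X_j|\ge 3$ becomes $z_i = z_j$ or $w_i = w_j$ for every $i\ne j\ge 2$. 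A short partition-cover argument on $\{2,3,4,5\}$ then forces either all the $z_i$'s equal or all the $w_i$'s equal, giving two structural cases: $(A)$ $|\bigcap_i X_i|\ge 3$, or $(B)$ $|\bigcup_i X_i| = 5$ with each $X_i$ a $4$-subset of the union.

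In Case $(A)$, every $x\in\bigcap_i X_i$ is adjacent to all $u_i$'s, so $N(x) = \{u_1,\ldots,u_5\}$; for any $y\in\bigcup_i X_i\setminus\bigcap_i X_i$, the non-adjacent pair $\{x,y\}$ has $d_y := |\{i : y\in X_i\}|$ common neighbours, forcing $d_y\in\{4,5\}$, and $d_y=5$ is excluded since $y$ is not in every $X_i$. The identity $\sum_y d_y = 20$ then yields a non-integer count of extras when $|\bigcap X_i|=3$, and the remaining subcase $|\bigcap X_i|=4$ (all $X_i$ equal) produces an induced $K_{5,5}$ on $10$ vertices with $25 > 3\cdot 10 - 6$ edges; so $(A)$ is impossible. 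In Case $(B)$ the same counting plus the hypothesis on pairs within $W := \bigcup X_i$ forces each element of $W$ to be omitted by exactly one $X_i$, so one may label $W = \{w_1,\ldots,w_5\}$ with $u_i\sim w_j\iff i\ne j$. Each $w_j$ has exactly $4$ neighbours here and needs one further neighbour $w_j'$; triangle-freeness and the known adjacencies force $w_j'\notin\{v,u_1,\ldots,u_5,w_1,\ldots,w_5\}$. Applying the hypothesis to the non-adjacent pair $\{w_j,w_k\}$, whose common neighbour count is $3 + [w_j' = w_k']$, forces $w_j' = w_k'$ for all $j\ne k$, giving a single vertex $w^*$ with $N(w^*) = W$. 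The $12$ vertices $\{v,u_1,\ldots,u_5,w_1,\ldots,w_5,w^*\}$ are now closed under $N_G$, so by connectedness and $5$-regularity they are all of $V(G)$ and the induced edge set is exactly $K_{6,6}^-$. I expect the main obstacles to be the technical exclusion of the $3$-triangle case and the set-system classification driving the $(A)/(B)$ dichotomy.
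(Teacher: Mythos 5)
Your proposal is correct, and while it follows the same top-level skeleton as the paper (contrapositive; first force $G$ triangle-free; then rule out a $K_{5,5}$-type configuration and reconstruct $K_{6,6}^-$), the execution of both main phases is genuinely different. For triangle-freeness the paper runs a single vertex-by-vertex chase ending in a $7$-vertex set with $16>15$ edges, whereas you split on whether the edge lies in $3$ or $4$ triangles and use clean degree-sum counts against sparsity of the $6$-set $\{u,v\}\cup S$ and the $7$-set $\{u,v,s_1,s_2,s_3,u',v'\}$; I checked the $3$-triangle case and it closes as you predict (one small correction: $3$-sparsity of the $5$-set $\{u,v\}\cup S$ already gives $|E(G[S])|\le 2$, and the constraints $2|E(G[S])|+p\ge 6$, $p\le 3$ then force $|E(G[S])|=2$, so the split you propose is on $\{2\}$ only, after which $p+[u'v'\in E]\ge 3$ and $q+[u'v'\in E]\ge 3$ push the edge count to at least $16$). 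In the triangle-free phase the paper cases on whether some pair has exactly $4$ versus $5$ common neighbours and does two long chases; your set-system classification of the five punctured neighbourhoods $X_i$ (pairwise intersections of size $\ge 3$ forcing either a common $3$-core or containment in a $5$-set) is a tidier organizing principle, and your counting identity $\sum_y d_y=20$ disposes of case (A) more transparently than the paper's $K_{5,5}$ chase. The dichotomy itself is true and elementary (if not all $X_i$ coincide, fix $X_1\ne X_2$ and show any $X_j\not\subseteq X_1\cup X_2$ must contain $X_1\cap X_2$, after which every other $X_k$ does too), but you should write it out carefully, including the degenerate case where some $X_i$ equals $X_1$ so that $z_i,w_i$ are undefined. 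What your route buys is brevity and verifiability in the middle of the argument; what the paper's route buys is that its chases produce explicit partial embeddings of $K_{6,6}^-$ that are reused pictorially in Lemma \ref{lem:outside2}.
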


\begin{proof}
Let $G$ be a connected, 3-sparse, 5-regular graph and suppose that $G$ does not have a reducible edge or vertex-pair.
Assume every edge contained in a triangle is in at least three triangles.
Suppose first that $G$ has an edge $e=uv$ contained in a triangle. Then $e$ is contained in at least three triangles, say $\{u,v,a\}$, $\{u,v,b\}$ and $\{u,v,c\}$. Let $d$ denote the final neighbour of $u$. 
Since $f=au$ is an edge in a triangle and $d(u)=5$, we may assume $ab \in E$. The edge $g=uc$ is contained in a triangle, thus must add at least two of $ac$, $bc$, $dc$ so, without loss of generality, we may assume $ac \in E$. Since $K_5$ is not 3-sparse, $bc \notin E$. Therefore $cd \in E$ (otherwise $g$ is reducible). 

Since $d(c)=5$, there is $w \in V$ such that $cw \in E$. The edge $h=cd$ is in a triangle, which implies two of $ad$, $vd$, $dw$ are in $E$. Without loss of generality $vd \in E$. Again since $K_5$ is not 3-sparse we have $ad \notin E$ and thus $dw \in E$. Since $d(u)=5$, $\{u,w\}$ is a vertex-pair with $c,d \in N_{u,w}$ we must have $|N_{u,w}| \geq 4$. Since $N(u)=\{a,b,c,d,v\}$ and $vw\notin E$, we have $wa, wb \in E$.
However $i(\{a,b,c,d,u,v,w\})\geq 16>3 \cdot 7-6 = 15 $, contradicting the assumption that $G$ is 3-sparse.

Hence we may assume $G$ is triangle free. If $G$ has no 4-cycles then, since $G$ is connected, it is easy to see that there exists a vertex-pair with exactly one common neighbour.
Thus we may assume $G$ has a 4-cycle. This implies, since $G$ is triangle free, that we have a vertex-pair $\{u,v\}$ with at least two common neighbours. 

Assume now that every vertex-pair with a common neighbour has exactly 5 common neighbours.
Let $\{u,v\}$ be a vertex-pair with exactly five common neighbours, say $N_{u,v} = \{a,b,c,d,e\}$. 
Since $G$ is triangle free, $\{a,b\}$ is a vertex-pair 
and since $u,v \in N_{a,b}$ we have $w,x,y\in V$ with $aw,bw,ax,bx,ay,by \in E$.
Notice that $\{a,c\}$ is a vertex-pair and $u,v\in N_{a,c}$. since $d(a)=5$ we have $cw, cx, cy \in E$. Similarly for the vertex-pairs $\{a,d\}$ and $\{a,e\}$. This gives a subgraph isomorphic to $K_{5,5}$ contrary to 3-sparsity.
   
Hence we may assume that there exists a vertex-pair $\{u,v\}$ with exactly four common neighbours, $N_{u,v}=\{a,b,c,d\}$.
Suppose every other vertex-pair has at least four common neighbours. Since $G$ is 5-regular, there exists $w,x \in V$ such that $uw, vx \in E$. Consider the vertex-pair $\{a,b\}$. We have $u,v \in N_{a,b}$ and since $G$ is triangle-free there exists $y,z\in V$ such that $ay, by, az, bz \in E$. 
The vertex-pair $\{a,c\}$ has two common neighbours so without loss of generality we may assume $yc \in E$. 
Next the vertex-pair $\{a,x\}$ has common neighbour $v$. Since $ux\notin E$, there exists $p\in V$ such that $ap,xp \in E$ and $xy,xz \in E$. 
Similarly the vertex-pair $\{a,w\}$ has a common neighbour and $wv\notin E$, thus $wy, wz, wp \in E$. 
Next the vertex-pair $\{a,d\}$ has common neighbours $u,v$ and hence we have $dz,dp \in E$. 
The vertex-pair $\{c,d\}$ also has common neighbours $u,v$. 
Since $yd,zc\notin E$ we have $pc\in E$ and there exists $t\in V$ such that $tc,td \in E$. 
Now $\{u,t\}$ is a vertex-pair with $c,d\in N_{u,t}$. Since $at\notin E$ we have $bt,wt\in E$.

Finally consider the vertex-pair $\{d,x\}$ and note that $p,v,z \in N_{d,x}$. Since $dy,xu\notin E$ and $\{d,x\}$ has at least four common neighbours we have $xt\in E$. Now the partition $\{\{a,b,c,d,w,x \},\\ \{u,v,p,y,z,t\}\}$ and the missing edges $at,bp,cz,dy,wv,xu$ together with the hypothesis that $G$ is 5-regular implies that $G=K_{6,6}^-$, contradicting our assumption.
\end{proof}

In Subsection \ref{subsec:structure} we will see that not all reducible edges or vertex-pairs are admissible. Hence we next establish a variant of Lemma \ref{lem:reducible} which uses the following concept. 
We say that a subset $X\subseteq V$ is a \emph{core} of $G$ if $G[X]$ has minimum degree at least 3 and every vertex $w\in V \setminus X$ has at most two neighbours in $X$. 
When $X\subsetneq V$, we say $X$ is a \emph{proper core}.

\begin{lemma}\label{lem:outside2}
Suppose $G=(V,E)$ is connected, 5-regular, 3-sparse and distinct from
$K_{6,6}^-$. 
Let $X\subsetneq V$ be a proper core. 
Then there exists a reducible edge $xy$ or a reducible vertex-pair $\{x,y\}$ with $x\in X$ and $y\in V\setminus X$.
\end{lemma}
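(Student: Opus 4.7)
My plan is to proceed by contradiction: suppose that every reducible edge and every reducible non-adjacent pair of $G$ has both endpoints in $X$ or both endpoints in $Y:=V\setminus X$. The core hypothesis forces $|N(y)\cap X|\leq 2$ and hence $|N(y)\cap Y|\geq 3$ for every $y\in Y$, and connectedness of $G$ together with $Y\neq\emptyset$ produces at least one crossing edge. Two consequences of the core bound are then crucial: (a) a crossing edge $xy$ with $x\in X$ and $y\in Y$ has at most one triangle whose third vertex lies in $X$, and (b) a non-adjacent crossing pair $\{x,y\}$ with $x\in X$ and $y\in Y$ has at most two common neighbours in $X$. Under the contradiction hypothesis these upgrade to: every crossing edge lies in $0$, $3$, or $4$ triangles, with at least two third vertices in $Y$ whenever it lies in any triangle; and every non-adjacent crossing pair has $0$, $4$, or $5$ common neighbours, with at least two of them in $Y$ whenever it has any. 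Crucially, the ``at least three triangles'' conclusion is self-propagating along crossing edges: if $xyz$ is a triangle with $x\in X$ and $y,z\in Y$, then $xz$ is itself a crossing edge already containing the triangle $xyz$, hence is also forced into three or four triangles.

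I then follow the dichotomy of Lemma \ref{lem:reducible}. In the triangle-rich branch, I pick a crossing edge $xy$ lying in a triangle; by (a) together with the hypothesis I may take two common neighbours $a,b\in Y$, and the self-propagation along the crossing edges $xa$, $xb$, $ya$, $yb$ reproduces the seven-vertex configuration from the corresponding case of Lemma \ref{lem:reducible} in which seven vertices induce at least sixteen edges, contradicting $3$-sparsity. In the triangle-free branch every crossing edge satisfies $N(x)\cap N(y)=\emptyset$; fixing such $x,y$ and any $z\in (N(y)\cap Y)\setminus\{x\}$ gives a non-adjacent crossing pair $\{x,z\}$ with common neighbour $y$, hence by (b) with $\geq 4$ common neighbours, $\geq 2$ of them in $Y$. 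Iterating this propagation exactly as in the common-neighbour analysis of Lemma \ref{lem:reducible}, either some crossing pair is forced to have five common neighbours (yielding an induced $K_{5,5}$ that violates $3$-sparsity) or the complete chain of deductions forces $G=K_{6,6}^-$, contradicting the hypothesis of the lemma.

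The main difficulty I anticipate lies in the triangle-free branch, where Lemma \ref{lem:reducible} concludes with an explicit twelve-vertex case analysis culminating in $K_{6,6}^-$. In the present setting each newly-constructed vertex must additionally be placed in $X$ or in $Y$ in a way consistent with the bound $|N(\cdot)\cap X|\leq 2$, and I expect that at each step either the core bound fixes the side of the next vertex or, before the construction can complete, it forces a non-adjacent crossing pair with exactly three common neighbours---a reducible crossing pair, contradicting our standing hypothesis. The proof will therefore be a label-tracked re-enactment of Lemma \ref{lem:reducible}, with the core inequality acting both as a guide for vertex placement and as the short-circuit that prevents the construction from ever reaching the $K_{6,6}^-$ endpoint.
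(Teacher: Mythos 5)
Your opening observations are sound and match the paper's: the core bound forces any crossing edge in a triangle, and any crossing pair with few common neighbours, to be reducible. But two things go wrong.

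First, your accounting of the triangle branch is off. For a crossing edge $xy$ with $x\in X$ and $y\in Y$, the core condition bounds \emph{both} sides: $y$ has at most two neighbours in $X$ (so at most one third vertex of a triangle on $xy$ lies in $X$), and $x$ has minimum degree $3$ in $G[X]$, hence at most two neighbours in $Y$ (so at most one third vertex lies in $Y$). A crossing edge therefore lies in at most \emph{two} triangles, and your claimed branch of ``$3$ or $4$ triangles with at least two third vertices in $Y$'' is empty --- the correct (and much stronger) conclusion, which is what the paper draws, is that under your standing hypothesis every crossing edge lies in \emph{no} triangle. The same double-sided count shows a non-adjacent crossing pair has at most four common neighbours (at most two on each side), so your ``five common neighbours, yielding $K_{5,5}$'' alternative is likewise vacuous. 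These slips are not fatal since the branches you describe cannot occur, but the seven-vertex argument you propose to run inside the first one is not a valid derivation from anything.

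Second, and decisively, the triangle-free branch --- which is the entire substance of the lemma --- is not carried out. You explicitly write that you ``expect'' and ``anticipate'' that a label-tracked re-enactment of Lemma \ref{lem:reducible} will work, but you do not perform it. The paper's proof at this point is not a relabelling of Lemma \ref{lem:reducible}: it is a fresh construction that starts from a non-reducible crossing pair $\{u,w\}$ whose four common neighbours are forced to split two--two across the partition (two in $X$, two in $V\setminus X$), then repeatedly uses the core inequality to pin down the side and the remaining adjacencies of each of roughly a dozen vertices ($r,s,t,y,a,b,c,d,e,\dots$), closing with the identification $G=K_{6,6}^-$ and $G[X]\cong K_{3,3}$. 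Whether your proposed short-circuit (``before the construction completes it forces a reducible crossing pair'') actually materialises is exactly the content of that case analysis; in the paper it does not short-circuit early but runs to the full $K_{6,6}^-$ configuration. As written, your proposal is a plan for a proof rather than a proof.
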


\begin{proof}
Suppose $uv$ is an edge with $u\in V\setminus X$ and $v\in X$.
Since $X$ is a core, $v$ (resp. $u$) has at least three neighbours in $X$ (resp $V/X$), and by 5-regularity at most 2 neighbours in $V/X$ (resp. $X$).
It is now easy to see that if $uv$ is in at least three triangles we obtain a contradiction to 5-regularity. Hence we may assume that $uv$ is in no triangles of $G$ (otherwise $uv$ is reducible).  By the same argument every edge with one end-vertex in $X$ and one in $V\setminus X$ is contained in no triangles of $G$.

Let $w\in X$ be a neighbour of $v$. Now $\{u,w\}$ is a vertex-pair with $v\in N_{u,w}$. Hence if $\{u,w\}$ is not reducible then they have at least four common neighbours. Since $u$ has at most one neighbour in $X-v$ and $w$ has at most two neighbours in $V\setminus X$ we have $v,y,s,t \in N_{u,w}$ where $y\in X$ and $s,t\in V \setminus X$.
Let $r$ be the unique element in $(N(w)\setminus \{v,y\})\cap X$. Since $sw$ is in no triangles, $\{r,s\}$ is a vertex-pair with a common neighbour $w$. Hence $\{r,s\}$ has at least four common neighbours.
Since $s$ has at most one neighbour in $X-w$ and $r$ has at most two neighbours in $V \setminus X$ we have $w,a,b,c \in N_{r,s}$ where $a\in X$ and $b,c\in V\setminus X$ (note that if $ru\in E$ then $X$ is not a proper core and if $rt\in E$ then $r,t,w$ induce a triangle). See Figure \ref{fig:lem35}(a).

\begin{figure}[ht]
\begin{center}
\begin{tikzpicture}[scale=0.5]

\filldraw (-10,-2) circle (2pt) node[anchor=east]{$w$};
\filldraw (-10,-4) circle (2pt) node[anchor=east]{$a$};
\filldraw (-10,2) circle (2pt) node[anchor=east]{$u$};
\filldraw (-10,4) circle (2pt) node[anchor=east]{$c$};
\filldraw (-10,6) circle (2pt) node[anchor=east]{$b$};

\filldraw (-6,-2) circle (2pt) node[anchor=west]{$v$};
\filldraw (-6,-4) circle (2pt) node[anchor=west]{$y$};
\filldraw (-6,-6) circle (2pt) node[anchor=west]{$r$};
\filldraw (-6,2) circle (2pt) node[anchor=west]{$t$};
\filldraw (-6,4) circle (2pt) node[anchor=west]{$s$};

\draw (-8,-4) circle (125pt); 
\filldraw (-8,-7) circle (0pt) node[anchor=north]{$X$};
\filldraw (-8,-9) circle (0pt) node[anchor=north]{(a)};

\draw[black]
(-6,-2) -- (-10,-2);

\draw[black]
(-6,-2) -- (-10,2);

\draw[black]
(-6,-4) -- (-10,2);

\draw[black]
(-6,-4) -- (-10,-2);

\draw[black]
(-6,-6) -- (-10,-4);

\draw[black]
(-6,-6) -- (-10,-2);

\draw[black]
(-6,-6) -- (-10,4);

\draw[black]
(-6,-6) -- (-10,6);

\draw[black]
(-6,2) -- (-10,2);

\draw[black]
(-6,2) -- (-10,-2);

\draw[black]
(-6,4) -- (-10,6);

\draw[black]
(-6,4) -- (-10,4);

\draw[black]
(-6,4) -- (-10,2);

\draw[black]
(-6,4) -- (-10,-2);

\draw[black]
(-6,4) -- (-10,-4);

\filldraw (3,-2) circle (2pt) node[anchor=east]{$w$};
\filldraw (3,-4) circle (2pt) node[anchor=east]{$a$};
\filldraw (3,-6) circle (2pt) node[anchor=east]{$e$};
\filldraw (3,2) circle (2pt) node[anchor=east]{$u$};
\filldraw (3,4) circle (2pt) node[anchor=east]{$c$};
\filldraw (3,6) circle (2pt) node[anchor=east]{$b$};

\filldraw (7,-2) circle (2pt) node[anchor=west]{$v$};
\filldraw (7,-4) circle (2pt) node[anchor=west]{$y$};
\filldraw (7,-6) circle (2pt) node[anchor=west]{$r$};
\filldraw (7,2) circle (2pt) node[anchor=west]{$t$};
\filldraw (7,4) circle (2pt) node[anchor=west]{$s$};
\filldraw (7,6) circle (2pt) node[anchor=west]{$d$};

\draw (5,-4) circle (125pt); 
\filldraw (5,-7) circle (0pt) node[anchor=north]{$X$};
\filldraw (5,-9) circle (0pt) node[anchor=north]{(b)};

\draw[black]
(7,-2) -- (3,-2);

\draw[black]
(7,-2) -- (3,4);

\draw[black]
(7,-2) -- (3,2);

\draw[black]
(7,-2) -- (3,-4);

\draw[black]
(7,-2) -- (3,-6);

\draw[black]
(7,-4) -- (3,6);

\draw[black]
(7,-4) -- (3,2);

\draw[black]
(7,-4) -- (3,-2);

\draw[black]
(7,-4) -- (3,-4);

\draw[black]
(7,-4) -- (3,-6);

\draw[black]
(7,-6) -- (3,-6);

\draw[black]
(7,-6) -- (3,-4);

\draw[black]
(7,-6) -- (3,-2);

\draw[black]
(7,-6) -- (3,4);

\draw[black]
(7,-6) -- (3,6);

\draw[black]
(7,2) -- (3,6);

\draw[black]
(7,2) -- (3,4);

\draw[black]
(7,2) -- (3,2);

\draw[black]
(7,2) -- (3,-2);

\draw[black]
(7,2) -- (3,-6);

\draw[black]
(7,4) -- (3,6);

\draw[black]
(7,4) -- (3,4);

\draw[black]
(7,4) -- (3,2);

\draw[black]
(7,4) -- (3,-2);

\draw[black]
(7,4) -- (3,-4);

\draw[black]
(7,6) -- (3,6);

\draw[black]
(7,6) -- (3,4);

\draw[black]
(7,6) -- (3,2);

\draw[black]
(7,6) -- (3,-4);

\draw[black]
(7,6) -- (3,-6);
\end{tikzpicture}  
\caption{An illustration of the proof of Lemma \ref{lem:outside2}. In (b) we find $K_{6,6}^-$ and $G[X]=K_{3,3}$. In fact any proper core in $K_{6,6}^-$ induces a copy of $K_{3,3}$.} \label{fig:lem35}
\end{center}
\end{figure}
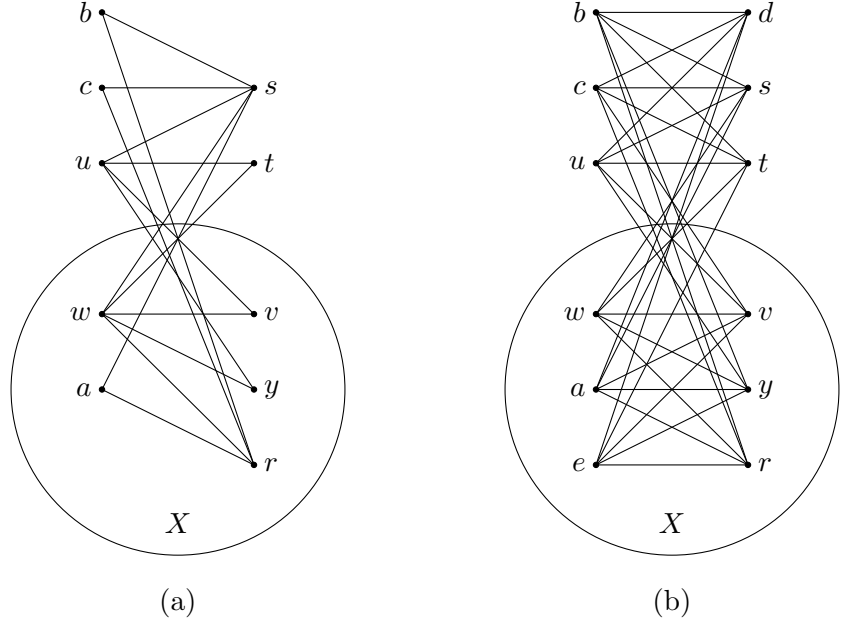

Now $\{w,b\}$ is a vertex-pair with two common neighbours and $b$ can be adjacent to at most one of $v,y$, which are equivalent, thus $by,bt\in E$. Similarly $\{w,c\}$ is a vertex-pair with two common neighbours and $ct\in E$. Note $ab \in E$ would induce a triangle, hence $\{a,b\}$ is a vertex-pair. We obtain contradictions if $b$ has another neighbour in $X$ or if either $bc \in E$ or $bu\in E$, therefore let $d \in V\setminus X$ with $db \in E$. Now $\{s,y\}$ is a vertex-pair with $N_{s,y}\supset \{b,u,w\}$. Since $y$ already has two neighbours in $V\setminus X$, its remaining two neighbours must be in $X$. Thus, for $\{s,y\}$ to have four common neighbours, we have $ay\in E$. Triangles are created if $yr$ or $yv$ exists, so we must add a new vertex $e\in X$ such that $ye\in E$. We have another vertex-pair $\{y,d\}$ and both $y$ and $w$ have degree 5, thus $da,du,de\in E$. If $rd\in E$ then we have a triangle so $\{r,d\}$ is a vertex-pair with $N_{r,d}\supset \{a,b\}$. Since $r$ needs one more neighbour in $X$ and $d$ cannot have another neighbour in $X$, we have $dc,re\in E$. 

Hence $\{c,e\}$ is a vertex-pair so $et\in E$ and $e$ needs another neighbour in $X$. If this is a vertex not previously labelled, say $f$, then $\{f,t\}$ is a vertex-pair but $t$ and $f$ cannot have four common neighbours.
So $cv,ev\in E$ since $ca\in E$ creates a triangle. 
The only remaining vertices without degree 5 are $a$ and $v$ and the vertex-pair $\{s,v\}$ gives that $av\in E$. See Figure \ref{fig:lem35}(b).
Thus $G$ is the graph $K_{6,6}^-$ with the partition $\{d,s,y,v,t,r\}$ and $\{e,a,b,c,u,w\}$ and $X=\{v,w,r,y,a,e\}$. 
\end{proof}

\subsection{Special sets in 5-regular graphs}

First we consider special sets which arise as `blockers' to our contraction operations.
Let $G=(V,E)$ and take $X\subset V$. If $i(X)=3|X|-k$ then we will call $X$ a \emph{$k$-set}. Note that in a 3-sparse graph a $6$-set induces a 3-tight subgraph and hence $k\geq 6$.

\begin{lemma}\label{lem:small}
    Let $G=(V,E)$ be a connected, 3-sparse, 5-regular graph. Let $X\subseteq V$ be a $k$-set. Then $|X|\leq 2k$ with equality if and only if $X=V$. Moreover $d(X,V \setminus X)=2k-|X|$.
\end{lemma}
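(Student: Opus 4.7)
The plan is to prove everything by a single edge-counting identity that relates $|X|$, $i(X)$, and $d(X,V\setminus X)$ using 5-regularity, after which the inequality and the equality case fall out immediately.

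First I would sum degrees over $X$. Since $G$ is 5-regular,
\[
5|X| \;=\; \sum_{v\in X} d_G(v) \;=\; 2\,i(X) + d(X, V\setminus X),
\]
because each edge with both endpoints in $X$ contributes $2$ to the left-hand sum while each edge with exactly one endpoint in $X$ contributes $1$. Substituting the defining relation $i(X)=3|X|-k$ of a $k$-set gives
\[
5|X| \;=\; 2(3|X|-k) + d(X,V\setminus X) \;=\; 6|X| - 2k + d(X,V\setminus X),
\]
and rearranging yields $d(X,V\setminus X) = 2k - |X|$, which is the "moreover" claim.

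Next I would read off the inequality. Since $d(X,V\setminus X)\geq 0$, we immediately obtain $|X|\leq 2k$. Observe also that $X\neq\emptyset$: otherwise $i(X)=0=3|X|$, forcing $k=0$, but any $k$-set with $|X|\geq 3$ in a 3-sparse graph has $k\geq 6$, and the cases $|X|\in\{1,2\}$ give $k\leq 1$, so $|X|\leq 2k$ is automatic and $|X|=2k$ only if $k\geq 1$, forcing $X$ nonempty.

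Finally, for the equality case: $|X|=2k$ holds if and only if $d(X,V\setminus X)=0$, i.e. there are no edges between $X$ and $V\setminus X$. Since $G$ is connected and $X$ is nonempty, this forces $V\setminus X=\emptyset$, i.e.\ $X=V$. Conversely, if $X=V$ then trivially $d(X,V\setminus X)=0$, so $|X|=2k$ by the identity above. There is no real obstacle here; the only thing to be slightly careful about is handling the degenerate small-$|X|$ regime when invoking nonemptiness, which is easily dispatched as above.
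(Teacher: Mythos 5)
Your proof is correct and takes essentially the same route as the paper's: both rest on the degree-sum identity $5|X| = 2i(X) + d(X, V\setminus X)$, with nonnegativity of the cut giving $|X|\leq 2k$ and connectedness forcing $X=V$ in the equality case. (One small slip in your aside: $|X|\in\{1,2\}$ gives $k=3$ and $k\geq 5$ respectively, not $k\leq 1$, but this does not affect the argument since $|X|<2k$ still holds in those cases.)
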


\begin{proof}
    Since $G$ is connected and 5-regular, and $X\subseteq V$, we have $i(X)\leq \frac{5}{2}|X|$. Since $X$ is a $k$-set it follows that $6|X|-2k\leq 5|X|$. Moreover if $X\subsetneq V$ then $i(X)< \frac{5}{2}|X|$ and the same argument completes the first conclusion. For the second, 
    $$5|X|=2i(X)+d(X,V \setminus X)=6|X|-2k+d(X,V \setminus X).$$
    Thus $d(X,V \setminus X)=2k-|X|$.
\end{proof}

It follows immediately from Lemma \ref{lem:small} that if $|X|\geq k$, then $d(X,V \setminus X)\leq k$.

\begin{lemma}\label{lem:uniondeg}
Let $G=(V,E)$ be 3-sparse and take $X\subset V$ with $|X|\geq 4$, such that $X$ is a $k$-set for some $k\in \{6,7,8,9\}$. Then $d_{G[X]}(v)\geq 9-k$ for all $v\in X$. 
\end{lemma}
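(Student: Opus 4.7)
The plan is to argue by contradiction: suppose some vertex $v\in X$ has $d_{G[X]}(v)\leq 8-k$, and derive a violation of $3$-sparsity on the smaller set $X\setminus\{v\}$.

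First I would note that removing $v$ from $X$ destroys exactly $d_{G[X]}(v)$ edges of $G[X]$, so
\[
i(X\setminus\{v\})=i(X)-d_{G[X]}(v)\geq (3|X|-k)-(8-k)=3|X|-8.
\]
Since $|X|\geq 4$, the set $X\setminus\{v\}$ has at least $3$ vertices, so the $3$-sparsity bound $i(X\setminus\{v\})\leq 3(|X|-1)-6=3|X|-9$ applies. Combining these two inequalities yields $3|X|-8\leq 3|X|-9$, which is the desired contradiction. Hence $d_{G[X]}(v)\geq 9-k$ for every $v\in X$.

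There is essentially no obstacle here: the hypothesis $|X|\geq 4$ is exactly what is needed so that Maxwell's sparsity bound (Lemma~\ref{lem:max} applied via the definition of $3$-sparsity) can be invoked on $X\setminus\{v\}$. The restriction $k\leq 9$ is used only implicitly to make the conclusion non-vacuous (otherwise $9-k\leq 0$ and there is nothing to prove).
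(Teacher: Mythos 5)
Your proof is correct and is essentially the same as the paper's: both contract the argument to the observation that $i(X\setminus\{v\})=i(X)-d_{G[X]}(v)$ would exceed $3|X\setminus\{v\}|-6$ if $d_{G[X]}(v)\leq 8-k$, contradicting $3$-sparsity since $|X\setminus\{v\}|\geq 3$. The only cosmetic difference is that the paper phrases the bound as a strict inequality $i(X-v)>3|X-v|-6$ while you use the integer-shifted form $i(X\setminus\{v\})\geq 3|X|-8$; these are equivalent.
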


\begin{proof}
Suppose $d_{G[X]}(v)< 9-k$. Since $X$ is a $k$-set we have $i(X-v)>3|X|-k -(9-k)=3|X-v|-6$, contradicting the 3-sparsity of $G$ unless $|X-v|=2$ (which contradicts the hypothesis that $|X|\geq 4$).
\end{proof}

\begin{lemma}\label{lem:7sets}
Let $G=(V,E)$ be 5-regular and 3-sparse and suppose the only 6-sets in $G$ induce copies of $K_3$.
Then every 7-set $X\subset V$, with $|X|\geq 5$ has $d_{G[X]}(v)\geq 3$ for all $v\in X$. 
\end{lemma}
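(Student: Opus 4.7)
The plan is to argue by contradiction, combining the general degree bound of Lemma \ref{lem:uniondeg} with the hypothesis on $6$-sets to squeeze out the case $d_{G[X]}(v)=2$.

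First, applying Lemma \ref{lem:uniondeg} to $X$ with $k=7$ gives $d_{G[X]}(v)\geq 2$ for every $v\in X$ (using $|X|\geq 5\geq 4$). Hence, supposing the conclusion fails, there exists $v\in X$ with $d_{G[X]}(v)=2$ exactly.

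Next, I compute the edge count in $X-v$. Since $X$ is a $7$-set, $i(X)=3|X|-7$, and removing $v$ deletes exactly its two neighbours in $G[X]$, so
$$i(X-v)=i(X)-2=3|X|-9=3|X-v|-6.$$
Therefore $X-v$ is a $6$-set.

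By the hypothesis that every $6$-set in $G$ induces a copy of $K_3$, we must have $|X-v|=3$ (since $K_3$ has three vertices), and so $|X|=4$. This contradicts $|X|\geq 5$, completing the proof. There is no real obstacle here: the whole argument is the elementary observation that reducing a $7$-set by a vertex of induced degree $2$ produces a $6$-set, which the hypothesis forces to be tiny.
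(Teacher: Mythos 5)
Your proof is correct and follows essentially the same route as the paper: rule out $d_{G[X]}(v)\leq 1$ by $3$-sparsity (you do this via Lemma \ref{lem:uniondeg}, which is the same computation), then observe that $d_{G[X]}(v)=2$ forces $X-v$ to be a $6$-set on at least $4$ vertices, contradicting the hypothesis that every $6$-set induces a $K_3$. The only nitpick is the phrasing ``deletes exactly its two neighbours''---you mean the two incident edges---but the displayed computation is right.
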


\begin{proof}
Suppose $v\in X$ has $d_{G[X]}(v)<3$ then either we contradict the 3-sparsity of $G$ or $v$ has exactly two neighbours in $X$ and $X-v$ is a 6-set on at least 4 vertices.
\end{proof}

\begin{lemma}\label{lem:no_triangle_intersect}
Let $G=(V,E)$ be connected, 5-regular, 3-sparse and suppose that every 6-set in $G$ induces a copy of $K_3$. Suppose $X,Y\subset V$ are 7- or 8-sets on at least 6 vertices such that $G[X\cap Y]$ is isomorphic to $K_3$. Then $G$ contains a subgraph isomorphic to $K_4-e$.
\end{lemma}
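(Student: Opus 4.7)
The plan is to double-count the edges leaving the triangle $T := X\cap Y$ and combine that with the standing hypothesis that every 6-set of $G$ induces a $K_3$. Write $D_X := X\setminus Y$ and $D_Y := Y\setminus X$, and suppose $D_X$ is a $k_X'$-set and $D_Y$ is a $k_Y'$-set. Since $|X|,|Y|\ge 6$ we have $|D_X|,|D_Y|\ge 3$, and so 3-sparsity of $G$ gives $k_X',k_Y'\ge 6$. Decomposing $i(X) = i(T) + i(D_X) + e(T,D_X)$ with $i(T)=3$, a short calculation gives
\[
 e(T,D_X)=k_X'-k_X+6,\qquad e(T,D_Y)=k_Y'-k_Y+6.
\]

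Next I would observe that since $G$ is $5$-regular and each of the three vertices of $T$ uses two of its five edges inside the triangle, exactly nine edges of $G$ leave $T$; these fall into $D_X$, $D_Y$ or $V\setminus(X\cup Y)$, so $e(T,D_X)+e(T,D_Y)\le 9$. Substituting the two identities above yields the key inequality
\[
 k_X'+k_Y'\le k_X+k_Y-3.
\]

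I would then split into cases according to $(k_X,k_Y)\in\{(7,7),(7,8),(8,8)\}$, with $(8,7)$ symmetric. In the $(7,7)$ case the bound becomes $k_X'+k_Y'\le 11$, incompatible with $k_X',k_Y'\ge 6$; hence this configuration does not occur and the conclusion is vacuous. In each remaining case the inequality combined with $k_X',k_Y'\ge 6$ forces at least one of $k_X',k_Y'$ to equal $6$; say $k_X'=6$. By the standing hypothesis $G[D_X]\cong K_3$, so $|D_X|=3$, and the identity above gives $e(T,D_X)=k_X'-k_X+6\in\{4,5\}$. Distributing at least four edges between the three-element sets $T$ and $D_X$, pigeonhole supplies some $d\in D_X$ with at least two neighbours in $T$. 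Together with the triangle on $T$, the four vertices $\{a,b,c,d\}$ span at least five edges and therefore induce a copy of $K_4-e$.

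The argument is essentially a double count plus a finite case check, so I do not anticipate a genuine obstacle; the only place requiring slight care is confirming that 3-sparsity may be applied to $D_X$ and $D_Y$ (so that they each are $k'$-sets with $k'\ge 6$), which follows at once from $|X|,|Y|\ge 6$ and $|T|=3$. The role of the 6-set hypothesis is exactly to collapse a 6-set conclusion, $i(D_X)=3|D_X|-6$, into the structural statement $|D_X|=3$ and $G[D_X]=K_3$, which is what powers the final pigeonhole step.
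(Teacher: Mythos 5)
Your proof is correct and takes essentially the same route as the paper's: both arguments count the nine edges leaving the triangle $T$ using 5-regularity, invoke the 6-set hypothesis to force one of $X\setminus Y$, $Y\setminus X$ to induce a $K_3$, and finish by pigeonholing at least four edges between two disjoint triangles to produce $K_4-e$. The paper merely organises the count by selecting the side $Z$ with $d(T,Z\setminus T)\leq 4$ instead of summing the identity over both sides and running a case check on $(k_X,k_Y)$, but the content is identical.
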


\begin{proof}
Let $T=X\cap Y$. Then Lemma \ref{lem:small} implies that $d(T,V\setminus T)=9$. Hence, for some $Z\in \{X,Y\}$, we have $d(T,Z\setminus T)\leq 4$.
If $Z\setminus T$ is not a 6-set then
$$i(Z) = i(Z\setminus T)+i(T)+d(T,Z\setminus T)\leq 3|Z\setminus T|-7+3+4=3|Z|-9,$$
which contradicts the hypotheses on $X$ and $Y$.
Hence $Z\setminus T$ is a 6-set inducing a copy of $K_3$ and $|Z|=6$. Now
$$i(Z) = i(Z\setminus T)+i(T)+d(T,Z\setminus T)=6+d(T,Z\setminus T)\leq 10.$$
Hence $Z$ is an 8-set and
$d(T,Z\setminus T)=4$. Thus $G[Z]$ consists of two vertex-disjoint triangles joined by 4 edges. The conclusion follows.
\end{proof}

Let $G=(V,E)$ be a connected graph and take $k\in \mathbb{N}$. A \emph{$k$-edge cutset} is a subset $S\subset E$ of size $|S|=k$ such that $G-S$ is disconnected. A $k$-edge cutset is \emph{trivial} if one component of $G-S$ is isomorphic to $K_1$. $G$ is \emph{essentially $k$-edge connected} if the only $j$-edge cutsets with $j<k$ are trivial.

\begin{lemma}\label{lem:ess-ec}
Let $G=(V,E)$ be a 3-sparse graph with $X\subset V$ and $|X|\geq 5$. Suppose that there is no 6-set in $G$. Moreover suppose that $i(X)=3|X|-k$ with $7 \leq k\leq 11$. Then $G[X]$ is essentially $(12-k)$-edge-connected. 
\end{lemma}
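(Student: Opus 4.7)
The plan is to argue by contradiction. Suppose $G[X]$ admits a non-trivial edge cutset $S$ with $|S|=j\le 11-k$. Pick any component $A$ of $G[X]-S$ and set $B:=X\setminus A$; the non-triviality of $S$ forces $|A|\ge 2$, and since $B$ is the union of the remaining components we also have $|B|\ge 2$. Because every edge of $S$ crosses the bipartition $(A,B)$, we record the identity
\[
i(X)=i(A)+i(B)+j.
\]

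The key ingredient is to convert the "no 6-set" hypothesis into a uniform edge bound on subsets. Combined with 3-sparsity it gives $i(Y)\le 3|Y|-7$ for every $Y\subseteq V$ with $|Y|\ge 3$, and trivially $i(Y)\le 1$ for $|Y|=2$. Applying this to $A$ and $B$ in the identity above should pinch $j$ from below by at least $12-k$, contradicting $j\le 11-k$.

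The remaining work is a short case split on $|A|,|B|$. If both are at least $3$ then $i(A)+i(B)\le 3|X|-14$, and the identity forces $j\ge 14-k$, a contradiction. If exactly one of them, say $A$, has size $2$, then $|B|=|X|-2\ge 3$ by the hypothesis $|X|\ge 5$, and $i(A)+i(B)\le 1+(3|B|-7)=3|X|-12$, so $j\ge 12-k$, again contradicting $j\le 11-k$. The last case $|A|=|B|=2$ is excluded directly by $|X|\ge 5$. There is no real obstacle; the content is just the observation that ruling out 6-sets upgrades the 3-sparsity bound $3|Y|-6$ to $3|Y|-7$ on each side of the cut, which is exactly the slack needed to produce the target edge-connectivity $12-k$.
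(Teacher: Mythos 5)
Your proof is correct and follows essentially the same route as the paper: contradiction via the decomposition $i(X)=i(A)+i(B)+d(A,B)$, upgrading $3|Y|-6$ to $3|Y|-7$ using the absence of 6-sets, and the same case split on $|A|,|B|$. One tiny imprecision: if $G[X]-S$ has more than two components, not every edge of $S$ need cross the bipartition $(A,B)$, so your displayed identity should be the inequality $i(X)\leq i(A)+i(B)+j$ (equivalently $d(A,B)\leq j$), which is the direction your argument actually uses, so nothing breaks.
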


\begin{proof}
Suppose, for a contradiction, that there exists $S\subset E(G[X])$ with $|S|\leq 11-k$ such that $G[X]-S$ has two components $G[X_1]$ and $G[X_2]$, and $|X_i|>1$ for $i=1,2$.
By 3-sparsity we have $i(X_i)\leq 3|X_i|-5$ with equality if and only if $|X_i|=2$ for $i=1,2$ and if $|X_i|\geq 3$ then we have $i(X_i)\leq 3|X_i|-7$.
If $|X_1|=|X_2|=2$, then $|X|=4$, contradicting our assumption.
Hence suppose without loss of generality that $|X_2|\geq 3$. 
Then
\begin{eqnarray*}
        i(X) &=& i(X_1)+i(X_2)+d(X_1,X_2)\\
        &\leq& 3|X_1|+3|X_2|-12+11-k\\
        &=& 3|X|-1-k,
\end{eqnarray*}
a contradiction. 
\end{proof}

\begin{lemma}\label{lem:8set}
Let $G=(V,E)$ be a 5-regular, 3-sparse graph with no 6-set. For $Y,Z\subset V$ with $|Y|,|Z|\geq 6$, suppose $Y$ is a $j$-set and $Z$ is a $k$-set for $j,k\in \{7,8\}$.
If $Y$ and $Z$ both have minimum degree 3 and $Y\cap Z=\{a,b\}$ with $ab\in E$, then $Y$ and $Z$ are both 8-sets.
\end{lemma}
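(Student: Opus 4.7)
The plan is to pin down the neighbourhoods of $a$ and $b$ using the degree hypotheses, and then show that $Y\setminus Z$ (respectively $Z\setminus Y$) would be a 6-set on at least 4 vertices if $j=7$ (respectively $k=7$), contradicting the standing no-6-set hypothesis.

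First, I would exploit that $a$ and $b$ have $G$-degree exactly 5 and that $ab\in E$. Since $Y\cap Z=\{a,b\}$, we have $Y\setminus Z = Y\setminus\{a,b\}$ and $Z\setminus Y = Z\setminus\{a,b\}$. The minimum degree 3 condition in $G[Y]$ forces $a$ to have at least two neighbours in $Y\setminus Z$, and the same condition in $G[Z]$ forces at least two neighbours in $Z\setminus Y$. Together with the edge $ab$, this already accounts for all five neighbours of $a$, so equality must hold throughout: $a$ has exactly two neighbours in $Y\setminus Z$ and exactly two neighbours in $Z\setminus Y$. The identical argument applies to $b$.

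Next, I would partition the edges of $G[Y]$ according to whether they lie inside $\{a,b\}$, cross between $\{a,b\}$ and $Y\setminus Z$, or lie inside $Y\setminus Z$. The first block contributes $1$ edge (namely $ab$), the second contributes $2+2=4$ edges by the previous paragraph, so
$$i(Y\setminus Z) \;=\; i(Y) - 5 \;=\; 3|Y| - j - 5 \;=\; 3|Y\setminus Z| + 1 - j.$$
If $j=7$, this gives $i(Y\setminus Z) = 3|Y\setminus Z| - 6$, i.e.\ $Y\setminus Z$ is a 6-set of size $|Y|-2 \geq 4$, contradicting the hypothesis. Hence $j=8$, and the same argument applied to $Z$ yields $k=8$.

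There is no real obstacle here: the statement is essentially forced once 5-regularity and the minimum degree condition pin down the distribution of the off-edge neighbours of $a$ and $b$. The only point to verify is that $|Y\setminus Z|\geq 4$, so that the derived 6-set is genuinely large enough to violate the hypothesis; this is immediate from $|Y|\geq 6$, and symmetrically for $Z$.
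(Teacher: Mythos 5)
Your proof is correct and follows essentially the same route as the paper: the paper likewise uses 5-regularity together with the minimum degree 3 condition in both $G[Y]$ and $G[Z]$ to force $a$ and $b$ to have degree exactly 3 in $G[X]$ for a putative 7-set $X\in\{Y,Z\}$, and then observes that $X\setminus\{a,b\}$ is a 6-set, contradicting the hypothesis. Your write-up simply spells out the edge count $i(X\setminus\{a,b\})=i(X)-5$ and the size bound that the paper leaves implicit.
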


\begin{proof}
Suppose $X\in \{Y,Z\}$ is a 7-set. Then $a,b$ both have degree $3$ in $G[X]$ and thus  $X\setminus \{a,b\}$ is a 6-set. 
This contradicts the hypothesis that $G$ has no 6-set.
\end{proof}

We will also need the following elementary observation which follows from Mantel's theorem.

\begin{lemma}\label{lem:trifree}
Let $G=(V,E)$ be a graph and let $X\subset V$ be a 7-set with $|X|=n$ and $Y\subset V$ be an 8-set with $|Y|=n'$. If $G[X]$ is triangle-free then $n\leq 3$ or $n\geq 9$ and if $G[Y]$ is triangle-free then $n'\leq 4$ or $n'\geq 8$.   
\end{lemma}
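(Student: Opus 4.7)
The plan is to apply Mantel's theorem directly and reduce to a quadratic inequality in the vertex count. Recall Mantel's theorem says that a triangle-free graph on $m$ vertices has at most $\lfloor m^2/4\rfloor$ edges.

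For the 7-set $X$ we have $i(X)=3n-7$, so triangle-freeness of $G[X]$ forces
\[
3n-7 \leq \tfrac{n^2}{4}, \qquad \text{equivalently} \qquad n^2 - 12n + 28 \geq 0.
\]
The roots of $n^2 - 12n + 28 = 0$ are $n = 6 \pm 2\sqrt{2}$, so the inequality holds iff $n \leq 6-2\sqrt{2}$ or $n \geq 6+2\sqrt{2}$. Since $2\sqrt{2} \approx 2.83$, the permissible integer values are exactly $n\leq 3$ or $n\geq 9$.

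For the 8-set $Y$ we have $i(Y)=3n'-8$, and Mantel gives
\[
3n'-8 \leq \tfrac{n'^2}{4}, \qquad \text{equivalently} \qquad n'^2 - 12n' + 32 \geq 0.
\]
This factorises as $(n'-4)(n'-8)\geq 0$, yielding $n'\leq 4$ or $n'\geq 8$.

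The entire argument is essentially a one-line application of Mantel's theorem followed by solving a quadratic, so there is no real obstacle. The only minor point worth mentioning is that, strictly speaking, Mantel's bound is $\lfloor m^2/4\rfloor$ rather than $m^2/4$, but since we are comparing against an integer on the left this makes no difference to the conclusion; one could alternatively just verify the forbidden integer values $n\in\{4,5,6,7,8\}$ and $n'\in\{5,6,7\}$ by hand, checking that in each case $3n-7$ (respectively $3n'-8$) exceeds $\lfloor n^2/4\rfloor$ (respectively $\lfloor n'^2/4\rfloor$).
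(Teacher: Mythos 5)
Your proof is correct and matches the paper's approach exactly: the paper states this lemma without proof as an ``elementary observation which follows from Mantel's theorem,'' and your calculation (bounding $3n-7$ and $3n'-8$ by the Mantel bound and solving the resulting quadratics) is precisely the computation being left to the reader. The remark about $\lfloor m^2/4\rfloor$ versus $m^2/4$ is a nice touch but, as you note, immaterial since the inequality $3n-7\leq\lfloor n^2/4\rfloor$ implies $3n-7\leq n^2/4$ in the direction you need.
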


\subsection{Intersections and unions}

\begin{lemma}\label{lem:unionof66}
Let $G=(V,E)$ be 3-sparse, 5-regular and take $X,Y\subset V$ with $|X|,|Y|\geq 4$ such that $X\cap Y \neq \emptyset$ and $X$, $Y$ are $6$-sets. Then $|X\cap Y|\geq 3$, $X\cup Y$ and $X\cap Y$ are both 6-sets and $d(X,Y)=0$.
\end{lemma}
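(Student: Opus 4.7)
My plan is to use the standard submodular identity
\[
i(X) + i(Y) + d(X, Y) = i(X \cup Y) + i(X \cap Y),
\]
which is obtained by double-counting edges of $G[X \cup Y]$ and holds in any graph, combined with a degree argument exploiting 5-regularity.

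Once $|X \cap Y| \geq 3$ is established, the conclusion follows cleanly. By 3-sparsity applied to $X \cup Y$ and $X \cap Y$ one has $i(X \cup Y) \leq 3|X \cup Y| - 6$ and $i(X \cap Y) \leq 3|X \cap Y| - 6$. Substituting $i(X) = 3|X|-6$ and $i(Y) = 3|Y|-6$, together with $|X| + |Y| = |X \cup Y| + |X \cap Y|$, into the identity forces $d(X, Y) \leq 0$; hence $d(X, Y) = 0$ and both 3-sparsity bounds must be tight, making $X \cup Y$ and $X \cap Y$ both 6-sets.

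The heart of the proof is establishing $|X \cap Y| \geq 3$. By Lemma \ref{lem:uniondeg}, since $X$ and $Y$ are 6-sets on at least 4 vertices, every vertex has degree at least $3$ in $G[X]$ (respectively $G[Y]$). If $|X \cap Y| = 1$, say $X \cap Y = \{v\}$, then $v$ has $\geq 3$ neighbours in each of the disjoint sets $X \setminus \{v\}$ and $Y \setminus \{v\}$, so $d_G(v) \geq 6$, contradicting 5-regularity. If $|X \cap Y| = 2$, say $\{u, v\}$, the analogous degree pressure first forces $uv \in E$ (otherwise $u$ would have at least $6$ distinct neighbours), and then pins down that each of $u, v$ has exactly $2$ neighbours in $X \setminus Y$ and $2$ in $Y \setminus X$, with all five neighbours lying in $X \cup Y$. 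Expanding $i(X) = i(\{u,v\}) + d(\{u,v\}, X \setminus Y) + i(X \setminus Y)$ then yields $i(X \setminus Y) = 3|X \setminus Y| - 5$, which violates 3-sparsity as soon as $|X \setminus Y| \geq 3$.

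The expected main obstacle is the remaining borderline subcase $|X \cap Y| = 2$ with $|X \setminus Y| = 2$ (so $|X|=4$), in which $G[X] \cong K_4$ and, by symmetry, $G[Y] \cong K_4$ share the edge $uv$; purely local sparsity and degree checks leave this configuration intact. Ruling it out requires a finer global argument, for instance by analysing how the $8$ boundary edges leaving $X \cup Y$ (via Lemma \ref{lem:small}) are distributed among the vertices of $V \setminus (X \cup Y)$, or by applying 3-sparsity to a larger judiciously-chosen subset built from the $K_4$-structure.
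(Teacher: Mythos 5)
Your argument is essentially the paper's proof: the same inclusion--exclusion identity disposes of the case $|X\cap Y|\geq 3$, the same degree bound (Lemma \ref{lem:uniondeg}: every vertex of a $6$-set on at least $4$ vertices has degree at least $3$ in it) rules out $|X\cap Y|=1$ and $|X\cap Y|=\{u,v\}$ with $uv\notin E$, and your count $i(X\setminus\{u,v\})=3|X\setminus\{u,v\}|-5$ is exactly the paper's final step. Up to the borderline case you flag, your proof is correct and coincides with the paper's.

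The case you could not close, $|X|=|Y|=4$ with $X\cap Y$ an edge (two copies of $K_4$ glued along an edge), is in fact not closed by the paper either: the paper's claimed contradiction from $i(X\setminus\{a,b\})=3|X\setminus\{a,b\}|-5$ is only a contradiction to $3$-sparsity when $|X\setminus\{a,b\}|\geq 3$, and when $|X\setminus\{a,b\}|=2$ it merely says the two remaining vertices are adjacent. Moreover, you should not expect a ``finer global argument'' to rescue this subcase: nothing in $5$-regularity or $3$-sparsity forbids two $K_4$'s sharing an edge --- their union is a $7$-set with $8$ edges leaving it, and those $8$ edges can be distributed to a $3$-sparse completion on $12$ vertices (attach $\{c,d\}$ and $\{e,f\}$ appropriately to a suitable $11$-edge subgraph of $K_6$). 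So the statement appears to fail in this boundary case, and the honest repair is to strengthen the hypothesis to (at least one of) $|X|,|Y|\geq 5$. This is harmless for the paper: in its sole application, inside Claim \ref{claim:6set}, both $6$-sets have at least $5$ vertices, and there your argument, like the paper's, is complete.
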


\begin{proof}
If $|X\cap Y|=1$ or $X\cap Y=\{a,b\}$ and $ab\notin E$ then Lemma \ref{lem:uniondeg} contradicts the fact that $G$ is 5-regular. If $X\cap Y=\{a,b\}$ and $ab\in E$ then Lemma \ref{lem:uniondeg} implies that $d_{G[X]}(v)=3=d_{G[Y]}(v)$ for each $v\in \{a,b\}$.
It follows that 
$$i(X\setminus \{a,b\})=i(X)-5=3|X|-6-5=3|X\setminus \{a,b\}|-5,$$ contradicting the 3-sparsity of $G$.
Hence $|X\cap Y|\geq 3$ and we have 
\begin{eqnarray*}
3|X\cup Y|-6 + 3|X\cap Y|-6 &\geq& i(X\cup Y)+i(X\cap Y) \\ &=& i(X)+i(Y)+d(X,Y) \\ &=& 3|X|-6+3|Y|-6+d(X,Y) \\ &=& 3|X\cup Y|+3|X\cap Y|-12+d(X,Y).
\end{eqnarray*}
If $|X\cap Y|\geq 3$ then $d(X,Y)=0$, equality holds throughout and $X\cup Y$ and $X\cap Y$ are both 6-sets.
\end{proof}

\begin{lemma}\label{lem:unionof77}
    Let $G=(V,E)$ be 3-sparse, 5-regular and take $X,Y \subset V$ such that $X \cap Y \neq \emptyset$ and $X$, $Y$ are both $7$-sets. 
    \begin{enumerate}
        \item If $X\cap Y = \{a,b\}$ with $ab \in E$, then $d(X,Y) \leq 3$ and $i(X\cup Y)= 3|X\cup Y|-9+d(X,Y)$, and
        \item if $|X\cap Y|\geq 3$, then $d(X,Y) \leq 2$, $X\cup Y$ is an $m$-set and $X \cap Y$ is an $n$-set where $m,n \geq 6$ and such that $m+n=14-d(X,Y)$.
    \end{enumerate}
Moreover, if $G[X]$ and $G[Y]$ have minimum degree 3 then either 1 or 2 holds.
\end{lemma}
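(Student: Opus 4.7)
The plan is to derive everything from one edge-counting identity and then split into cases according to $|X \cap Y|$. By partitioning the edges of $G[X \cup Y]$ according to whether each endpoint lies in $X \cap Y$, $X \setminus Y$, or $Y \setminus X$, one obtains
\[
i(X \cup Y) + i(X \cap Y) = i(X) + i(Y) + d(X, Y).
\]
Substituting the 7-set values $i(X) = 3|X| - 7$, $i(Y) = 3|Y| - 7$ and using $|X| + |Y| = |X \cup Y| + |X \cap Y|$ gives the working identity
\[
i(X \cup Y) + i(X \cap Y) = 3|X \cup Y| + 3|X \cap Y| - 14 + d(X,Y). \quad (\star)
\]

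For Case 2, the hypothesis $|X \cap Y| \geq 3$ allows 3-sparsity to be applied to both $X \cup Y$ and $X \cap Y$, yielding $i(X \cup Y) \leq 3|X \cup Y| - 6$ and $i(X \cap Y) \leq 3|X \cap Y| - 6$. Summing these bounds and comparing with $(\star)$ forces $d(X,Y) \leq 2$. Writing $i(X \cup Y) = 3|X \cup Y| - m$ and $i(X \cap Y) = 3|X \cap Y| - n$, 3-sparsity yields $m, n \geq 6$, and the relation $m + n = 14 - d(X,Y)$ is just $(\star)$ rearranged. For Case 1, substituting $i(X \cap Y) = 1$ and $|X \cap Y| = 2$ into $(\star)$ gives $i(X \cup Y) = 3|X \cup Y| - 9 + d(X,Y)$, and 3-sparsity applied to $X \cup Y$ then yields $d(X,Y) \leq 3$.

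For the moreover clause, the two remaining intersection patterns to rule out are $|X \cap Y| = 1$ and $|X \cap Y| = 2$ with non-adjacent vertices. In each case, pick any $a \in X \cap Y$; the hypothesis that $G[X]$ has minimum degree 3 forces $a$ to have at least 3 neighbours in $X$, and none of these can lie in $X \cap Y$ (either because $X \cap Y = \{a\}$, or because the other element of $X \cap Y$ is non-adjacent to $a$). Hence $a$ has at least 3 neighbours in $X \setminus Y$, and symmetrically at least 3 in $Y \setminus X$, giving $d_G(a) \geq 6$ and contradicting 5-regularity.

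The argument is essentially routine edge-counting once $(\star)$ is established; the only care needed is to ensure the sparsity bound $i(Z) \leq 3|Z| - 6$ is applied only when $|Z| \geq 3$, which holds since $i(X) = 3|X| - 7 \geq 0$ forces $|X| \geq 3$ (and similarly for $Y$), so $|X \cup Y| \geq 3$. I expect no serious obstacle; the slight subtlety is only in correctly identifying which intersection patterns the minimum-degree hypothesis eliminates.
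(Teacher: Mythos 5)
Your proposal is correct and follows essentially the same route as the paper: the same inclusion--exclusion identity $i(X\cup Y)+i(X\cap Y)=i(X)+i(Y)+d(X,Y)$, the same substitution of the 7-set values, and the same sparsity bounds in each case, with the ``moreover'' clause handled by the same degree-counting contradiction to 5-regularity (which the paper states without spelling out). No issues.
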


\begin{proof}
If $X\cap Y=\{a,b\}$ with $ab\in E$, then we have
    \begin{eqnarray*}
        i(X\cup Y) &=& i(X)+i(Y)+d(X,Y)-i(X\cap Y)\\
        &=& 3|X|-7+3|Y|-7+d(X,Y)-1\\
        &=& 3|X\cup Y|+3|X\cap Y|-14-1 +d(X,Y)\\
        &=& 3|X\cup Y|+6-15+d(X,Y)\\
        &=& 3|X\cup Y|-9+d(X,Y).
    \end{eqnarray*}
    By the 3-sparsity of $G$, we have $9-d(X,Y)\geq 6$ and $d(X,Y)\leq 3$. 
    
    If $|X\cap Y|\geq 3$ we have
    \begin{eqnarray*}
    3|X\cup Y|-6 + 3|X\cap Y|-6 &\geq& 
    i(X\cup Y)+i(X\cap Y) \\ &=& i(X)+i(Y)+d(X,Y) \\ &=& 
    3|X|-7+3|Y|-7+d(X,Y) \\ &=& 3|X\cup Y|+3|X\cap Y|-14+d(X,Y).
    \end{eqnarray*}
    Thus $d(X,Y)\leq 2$ and we can see that $X\cup Y$ is an $m$-set and $X \cap Y$ is an $n$-set $m,n \geq 6$ such that $m+n=14-d(X,Y)$. 

The final conclusion follows, because if $|X\cap Y|=1$ or $X\cap Y=\{a,b\}$ with $ab\notin E$ then we have a contradiction to the 5-regularity of $G$. 
\end{proof}

\begin{lemma}\label{lem:unionof88}
    Let $G=(V,E)$ be 3-sparse, 5-regular and take $X,Y \subset V$ such that $X \cap Y \neq \emptyset$ and $X$, $Y$ are both $8$-sets. 
    \begin{enumerate}
        \item If $X\cap Y = \{a,b\}$ with $ab \in E$, $d(X,Y) \leq 5$ and $i(X\cup Y)= 3|X\cup Y|-11+d(X,Y)$, and
        \item if $|X\cap Y|\geq 3$, $d(X,Y) \leq 4$ and $X\cup Y$ is an $m$-set, $X \cap Y$ is an $n$-set where $m,n \geq 6$ such that $m+n=16-d(X,Y)$.
    \end{enumerate}
    Moreover, if $G[X]$ and $G[Y]$ have minimum degree 3 then either 1 or 2 holds.
\end{lemma}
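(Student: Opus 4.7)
The plan is to mirror the proof of Lemma \ref{lem:unionof77} almost verbatim, substituting the 8-set defect. For part (1), I would expand
\begin{equation*}
i(X\cup Y) = i(X)+i(Y)+d(X,Y)-i(X\cap Y)
\end{equation*}
using $i(X)=3|X|-8$, $i(Y)=3|Y|-8$, $i(X\cap Y)=1$, and the identity $|X|+|Y|=|X\cup Y|+|X\cap Y|$ with $|X\cap Y|=2$. This yields the stated equality $i(X\cup Y)=3|X\cup Y|-11+d(X,Y)$, and then $3$-sparsity of $G$ on $X\cup Y$ forces $11-d(X,Y)\geq 6$, i.e.\ $d(X,Y)\leq 5$.

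For part (2), I would use the standard submodular-style computation: compare the upper bound $i(X\cup Y)+i(X\cap Y)\leq (3|X\cup Y|-6)+(3|X\cap Y|-6)$ from $3$-sparsity (valid since $|X\cap Y|\geq 3$) against the exact identity
\begin{equation*}
i(X\cup Y)+i(X\cap Y)=i(X)+i(Y)+d(X,Y)=3|X|+3|Y|-16+d(X,Y).
\end{equation*}
Rewriting $|X|+|Y|=|X\cup Y|+|X\cap Y|$ gives $d(X,Y)\leq 4$, and equality conditions in the above inequality force both $X\cup Y$ and $X\cap Y$ to be $k$-sets for some $k\geq 6$, with the arithmetic giving $m+n=16-d(X,Y)$.

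For the \textbf{moreover} clause, I would argue that the minimum degree 3 hypothesis on $G[X]$ and $G[Y]$ rules out the remaining small intersection cases. If $|X\cap Y|=1$ with $X\cap Y=\{v\}$, then $v$ has at least $3$ neighbours in $X$ and at least $3$ neighbours in $Y$, and these are disjoint, giving $d_G(v)\geq 6$, contradicting $5$-regularity. Similarly if $X\cap Y=\{a,b\}$ with $ab\notin E$, each of $a,b$ would need three neighbours in $X$ and three in $Y$ outside $\{a,b\}$, again contradicting $d_G(a)=5$. I expect no real obstacle here: the calculation in (1) and (2) is mechanical, and the only subtlety is making sure the minimum degree argument is applied consistently. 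The whole proof should parallel Lemma \ref{lem:unionof77} line by line, with the constants $7\mapsto 8$, $9\mapsto 11$, $14\mapsto 16$, $2\mapsto 4$, $3\mapsto 5$ updated accordingly.
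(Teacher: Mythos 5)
Your proposal is correct and follows essentially the same route as the paper: the paper proves part (1) by the same inclusion--exclusion identity plus 3-sparsity, part (2) by the same submodular comparison, and disposes of the remaining small-intersection cases in the moreover clause via exactly your 5-regularity argument (which it inherits verbatim from the proof of Lemma \ref{lem:unionof77}). No gaps.
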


In the proof, and in the proof of the lemma that follows, the final conclusion has an identical proof to the previous result and hence is omitted.

\begin{proof}
    If $X\cap Y=\{a,b\}$ with $ab\in E$, then we have $i(X\cup Y)=3|X\cup Y|-11+d(X,Y).$
    By the 3-sparsity of $G$, we have $11-d(X,Y)\geq 6$ and hence $d(X,Y)\leq 5$. 
    If $|X\cap Y|\geq 3$ we have $3|X\cup Y|-6 + 3|X\cap Y|-6 \geq 3|X\cup Y|+3|X\cap Y|-16+d(X,Y).$
    Thus $d(X,Y)\leq 4$ and we can see that $X\cup Y$ is an $m$-set and $X \cap Y$ is an $n$-set where $m,n \geq 6$ such that $m+n=16-d(X,Y)$. 
\end{proof}

\begin{lemma}\label{lem:unionof78}
Let $G=(V,E)$ be 3-sparse, 5-regular and take $X,Y\subset V$ such that $X\cap Y \neq \emptyset$, $X$ is a $7$-set and $Y$ is an $8$-set.
\begin{enumerate}
    \item If $X \cap Y = \{a,b\}$ with $ab \in E$, then $d(X,Y)\leq 4$ and $X\cup Y$ is a $(10-d(X,Y))$-set, and
    \item if $|X\cap Y|\geq 3$, then $d(X,Y)\leq 3$ and $X\cup Y$ is an $m$-set, $X\cap Y$ is an $n$-set where $m,n\geq 6$ such that $m+n=15-d(X,Y)$.
\end{enumerate}
Moreover, if $G[X]$ and $G[Y]$ have minimum degree 3 then either 1 or 2 holds.
\end{lemma}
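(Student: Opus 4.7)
The proof follows the exact template used for Lemmas for $77$ and $88$ (the previous two results), so the plan is to run a straightforward double-counting argument in each case and then dispose of the minimum-degree hypothesis at the end.

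For Case 1, assume $X \cap Y = \{a,b\}$ with $ab \in E$, so $i(X \cap Y) = 1$. I apply inclusion-exclusion for induced edge counts:
\begin{align*}
    i(X \cup Y) &= i(X) + i(Y) + d(X,Y) - i(X \cap Y)\\
    &= (3|X| - 7) + (3|Y| - 8) + d(X,Y) - 1\\
    &= 3|X \cup Y| + 3|X \cap Y| - 16 + d(X,Y)\\
    &= 3|X \cup Y| - 10 + d(X,Y),
\end{align*}
so $X \cup Y$ is a $(10 - d(X,Y))$-set. The $3$-sparsity of $G$ forces $10 - d(X,Y) \geq 6$, i.e.\ $d(X,Y) \leq 4$.

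For Case 2, assume $|X \cap Y| \geq 3$. Applying $3$-sparsity to both $X \cup Y$ and $X \cap Y$ and then the same inclusion-exclusion:
\begin{align*}
    3|X \cup Y| - 6 + 3|X \cap Y| - 6 &\geq i(X \cup Y) + i(X \cap Y)\\
    &= i(X) + i(Y) + d(X,Y)\\
    &= 3|X| - 7 + 3|Y| - 8 + d(X,Y)\\
    &= 3|X \cup Y| + 3|X \cap Y| - 15 + d(X,Y).
\end{align*}
Hence $d(X,Y) \leq 3$, and equality throughout the $3$-sparsity bounds shows that $X \cup Y$ is an $m$-set and $X \cap Y$ is an $n$-set with $m + n = 15 - d(X,Y)$; both $m,n \geq 6$ by $3$-sparsity.

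For the moreover statement, I need to rule out $|X \cap Y| = 1$ and $|X \cap Y| = 2$ with the two vertices non-adjacent, using $5$-regularity and the minimum-degree-$3$ assumption on $G[X]$ and $G[Y]$. In the first case, the unique vertex of $X \cap Y$ has at least three neighbours in $X$ and at least three in $Y$, whose only possible overlap is empty, giving degree $\geq 6$. In the second case, with $X \cap Y = \{a,b\}$ and $ab \notin E$, the vertex $a$ has three neighbours in $X \setminus \{b\}$ and three in $Y \setminus \{b\}$, which are disjoint (no common neighbour can lie in $X \cap Y$ since $b$ is the only other such vertex and $ab \notin E$), again forcing $d(a) \geq 6$. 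Either way we contradict $5$-regularity, so one of the two enumerated cases must hold. No step here is a real obstacle --- the whole argument is bookkeeping parallel to Lemmas \ref{lem:unionof77} and \ref{lem:unionof88}.
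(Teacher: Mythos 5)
Your proposal is correct and follows essentially the same route as the paper: the same inclusion--exclusion computations give $d(X,Y)\leq 4$ and the $(10-d(X,Y))$-set in case 1 and $d(X,Y)\leq 3$ with $m+n=15-d(X,Y)$ in case 2, and your $5$-regularity argument for the ``moreover'' clause is exactly the one the paper invokes by reference to the proof of Lemma \ref{lem:unionof77}. The only nitpick is the phrase ``equality throughout the $3$-sparsity bounds'' in case 2 --- no equality is needed there, since $m$ and $n$ are simply defined by the edge counts and the identity $m+n=15-d(X,Y)$ already follows from the inclusion--exclusion line of your display.
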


\begin{proof}
    If $X\cap Y=\{a,b\}$ with $ab\in E$, then we have $i(X\cup Y)=3|X\cup Y|-10+d(X,Y).$
This implies that $X \cup Y$ is a $(10-d(X,Y))$-set and, by 3-sparsity of $G$, $d(X,Y)\leq 4$. 
    If $|X\cap Y|\geq 3$ we have $3|X\cup Y|-6 + 3|X\cap Y|-6 \geq 3|X\cup Y|+3|X\cap Y|-15+d(X,Y).$
Thus $d(X,Y)\leq 3$ and $X\cup Y$ is an $m$-set, $X\cap Y$ is an $n$-set $n,m\geq 6$ such that $n+m=15-d(X,Y)$.
\end{proof}

\subsection{Admissible reductions}
\label{subsec:structure}

\begin{lemma}\label{lem:nonadmedge}
Let $G=(V,E)$ be a 3-sparse graph containing a reducible edge $e=uv$ and let $X \subset V$ such that $u,v \in X$. Then:
    \begin{enumerate}
        \item if $e$ is in exactly one triangle $u,v,w$ of $G$ then $e$ is non-admissible if and only if $|X|\geq 6$ and either (a) $X$ is a 6-set or (b)  $w \notin X$ and $X$ is a 7-set.
        \item if $e$ is in exactly two triangles $u,v,w$ and $u,v,x$ of $G$ then $e$ is non-admissible if and only if $|X|\geq 6$ and either (a) at most one of $w,x$ is in $X$ and $X$ is a 6-set or (b) $w,x \notin X$ and $X$ is a 7-set.
    \end{enumerate}
\end{lemma}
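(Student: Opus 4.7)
The plan is to analyse directly how contracting $e=uv$ changes induced edge counts, and to translate the failure of $3$-sparsity of the contracted graph $G'=G/e$ into a condition on subsets $X\subseteq V$ containing both $u$ and $v$. Let $w$ denote the new vertex of $G'$. For $X'\subseteq V(G')$ with $w\notin X'$ one has $G'[X']=G[X']$, so $3$-sparsity transfers trivially at such $X'$, and I therefore restrict attention to $X'\ni w$. Set $X:=(X'\setminus\{w\})\cup\{u,v\}$, so $|X|=|X'|+1$ and $u,v\in X$, and write $s:=|N_G(u)\cap N_G(v)\cap(X\setminus\{u,v\})|$ for the number of common neighbours of $u,v$ lying inside $X$. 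A direct edge-count, using that the contraction destroys $uv$ and identifies each pair $u\alpha,v\alpha$ (for $\alpha$ a common neighbour in $X$) into a single edge, yields the key identity
\[
i_{G'}(X')\;=\;i_G(X)-1-s.
\]

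Non-admissibility of $e$ witnessed by $X'$ is then equivalent to $i_{G'}(X')>3|X'|-6$, i.e.\ to
\[
i_G(X)\;\ge\;3|X|-7+s.
\]
Combined with the $3$-sparsity bound $i_G(X)\le 3|X|-6$ this leaves only three possibilities: $s=0$ with $X$ a $6$- or $7$-set, or $s=1$ with $X$ a $6$-set, while $s\ge 2$ is ruled out. Translating back: in part $(1)$, where $w$ is the unique common neighbour of $u,v$, one has $s=0\iff w\notin X$ and $s\le 1$ automatically, so the cases $s=0$ and $s=1$ match alternatives $(b)$ and $(a)$ respectively. In part $(2)$, where the common neighbours are $w$ and $x$, one has $s=|\{w,x\}\cap X|$, so $s=0\iff w,x\notin X$ and $s\le 1\iff$ at most one of $w,x$ lies in $X$, again matching $(b)$ and $(a)$.

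It remains to justify the lower bound $|X|\ge 6$ (equivalently, to rule out $|X|\in\{4,5\}$). The idea is that for every $\alpha\in X\setminus(\{u,v\}\cup(N(u)\cap N(v)))$ at most one of the edges $u\alpha,v\alpha$ lies in $E$, since $\alpha$ is not a common neighbour of $u,v$. Combining this with the reducibility bound $|N(u)\cap N(v)|\le 2$ yields an upper estimate on $i_G(X)$ that strictly fails $i_G(X)\ge 3|X|-7+s$ for $|X|=4,5$ in every subcase. The main bookkeeping obstacle is simply keeping track of the value of $s$ and of the position of each common neighbour of $u,v$ relative to $X$ when enumerating cases, but all the inequalities are elementary edge counts once the identity $i_{G'}(X')=i_G(X)-1-s$ is in place.
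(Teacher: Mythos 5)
Your proof is correct and follows essentially the same route as the paper's: a direct count of how contracting $uv$ changes induced edge numbers, combined with the $3$-sparsity of $G$ and an elementary check that sets of size $4$ or $5$ cannot carry enough edges. Your exact identity $i_{G'}(X')=i_G(X)-1-s$ for the full preimage $X=(X'\setminus\{u'\})\cup\{u,v\}$ is a cleaner packaging that renders unnecessary the paper's separate case of a lifted set containing $u$ but not $v$ (which the paper handles by passing to $Y\cup v$), since every potential violating set in $G/e$ containing the contracted vertex corresponds bijectively to a set of $G$ containing both $u$ and $v$.
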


\begin{proof}
   In both cases the sufficiency is clear. For the necessity we prove the contrapositive. Let $u'$ be the vertex we obtain by contracting $e=uv$ and let $G'=(V',E')$ be the graph we obtain after the contraction. Let $Y' \subset V'$ and let $Y \subset V$ be the corresponding set in $G$. In each case above we want to show $i(Y')\leq 3|Y'|-6$ for all $Y' \subset V'$ when the respective subcases fail. In both cases if $u' \notin Y'$ then $Y' \subset V$ and $i_{G'}(Y')=i_G(Y)\leq 3|Y|-6$. Thus consider $u' \in Y'$, which implies either $u,v \in Y$ or without loss of generality $u \in Y, v \notin Y$. Let $u,v \in Y$. In both cases of our lemma, by our assumption, either $i(Y)\leq 3|Y|-8$, $i(Y)\leq 3|Y|-7$ or (exclusively in case 2) $i(Y)\leq 3|Y|-6$, which imply in their respective cases that $i(Y')\leq 3|Y'|-6$. Thus let $u\in Y, v \notin Y$. Let $Y$ be an $l$-set and let $v$ have $k$ neighbours $t_1,\dots ,t_k$ in $Y$ unique from $w$ (or $w,x$ in case 2) and non-adjacent to $u$. If $k<l-5$, then we have $i(Y')\leq 3|Y'|-6$, since
    $$i(Y')=3|Y|-l+k=3|Y'|-l+k<3|Y'|-l+l-5=3|Y'|-5.$$ 
    If $k\geq l-5$, then we consider $Y\cup v$. If no other vertex in the triangle(s) is contained in $Y$, we obtain $i_G(Y\cup v) \geq 3|Y\cup v|-7$. If exactly one other vertex in the triangle(s) excluding $v$ is contained in $Y$, we obtain $i_G(Y\cup v) \geq 3|Y\cup v|-6$. Exclusively in case 2, if $w,x\in Y$, we obtain $i_G(Y\cup v) \geq 3|Y\cup v|-5$. Therefore if $k\geq l-5$ we always have a contradiction to our assumptions. Since the 3-sparsity inequality is only required for sets of size at least 3, $|X|\geq 4$. If $|X|=4$ or $5$ then $e$ exceeds the number of triangles allowed in our assumptions. Thus $|X| \geq 6$.
\end{proof} 

\begin{lemma}\label{lem:nonadmpair}
    Let $G=(V,E)$ be a 3-sparse graph containing a reducible vertex-pair $\{u,v\}$ and let $X \subset V$ such that $u,v \in X$. Then:
    \begin{enumerate}
        \item If $\{u,v\}$ has exactly one common neighbour $a$ then $\{u,v\}$ is non-admissible if and only if $|X|\geq 6$ and either (a) $X$ is a 6-set,
        (b) $X$ is a 7-set or (c) $a\notin X$ and $X$ is an 8-set.
        \item If $\{u,v\}$ has exactly two common neighbours $a,b$ then $\{u,v\}$ is non-admissible if and only if $|X|\geq 6$ and either (a) $X$ is a 6-set, (b) at most one of $a,b$ is in $X$ and $X$ is a 7-set or (c) $a,b \notin X$ and $X$ is an 8-set.
        \item If $\{u,v\}$ has exactly three common neighbours $a,b,c$ then $\{u,v\}$ is non-admissible if and only if $|X|\geq 6$ and either (a) $X$ is a 6-set, (b) at most one of $a,b,c$ is in $X$ and $X$ is a 7-set or (c) $a,b,c \notin X$ and $X$ is an 8-set.
    \end{enumerate}
\end{lemma}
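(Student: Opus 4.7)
The plan is to follow the template of the proof of Lemma \ref{lem:nonadmedge}: in each of the three cases the sufficiency will be an immediate edge count on $X' := (X \setminus \{u,v\}) \cup \{u'\}$, where $u'$ denotes the merged vertex produced by the spider contraction, so the substance of the argument lies in the necessity direction, which I would establish by proving the contrapositive.

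Let $G' = (V', E')$ denote the result of the spider contraction of $\{u,v\}$, and for any $Y' \subset V'$ with $u' \in Y'$ set $Y := (Y' \setminus \{u'\}) \cup \{u,v\}$. The first step is to verify the edge-counting identity
\begin{equation*}
i_{G'}(Y') \;=\; i_G(Y) - c_Y, \qquad c_Y := |N_{u,v} \cap Y|,
\end{equation*}
which holds because each common neighbour of $\{u,v\}$ lying in $Y$ contributes two edges (namely $uw$ and $vw$) to $i_G(Y)$ but only one edge ($u'w$) to $i_{G'}(Y')$, while $uv \notin E$ contributes nothing. Since $|Y'| = |Y| - 1$, the failure of 3-sparsity at $Y'$ is equivalent to $i_G(Y) > 3|Y| - 9 + c_Y$; writing $Y$ as a $k$-set this collapses to the single \emph{blocker inequality}
\begin{equation*}
k + c_Y \;\leq\; 8.
\end{equation*}

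The body of the proof will then be a short case split on $c := |N_{u,v}| \in \{1,2,3\}$ together with $c_Y \in \{0,\ldots,c\}$. Under the constraint $k \geq 6$, the only $(k, c_Y)$-pairs satisfying $k + c_Y \leq 8$ are $(6,c_Y)$, $(7, c_Y \leq 1)$ and $(8, 0)$. Translating each bound on $c_Y$ into a statement about how many of the common neighbours of $\{u,v\}$ may lie in $X$ reproduces precisely the subcases (a), (b), (c) listed in each of the three cases of the statement, with the possible values of $c_Y$ in case $j$ naturally restricted to $\{0, \ldots, j\}$.

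Finally, the size bound $|X| \geq 6$ will be verified by a short separate enumeration. Because $uv \notin E$, a set $X \supset \{u,v\}$ of size $|X| \in \{3,4,5\}$ carries at most $\binom{|X|}{2}-1$ edges, and checking the handful of configurations shows that every $X$ forcing $k \leq 8$ simultaneously forces enough common neighbours of $\{u,v\}$ into $X$ that $k + c_Y \geq 9$; in particular no $X$ of size at most $5$ can be a blocker. I expect the only real obstacle to be the bookkeeping of matching $(k,c_Y)$-pairs to the nine subcases of the statement; beyond the identity displayed above, there are no new conceptual ingredients.
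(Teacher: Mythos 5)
Your overall strategy coincides with the paper's: prove necessity by the contrapositive, comparing $i_{G'}(Y')$ with $i_G(Y)$ for the preimage $Y=(Y'\setminus\{u'\})\cup\{u,v\}$ of a set $Y'\ni u'$, and dispose of small sets by a separate count. Your formulation is in fact tidier, since working only with the full preimage (the paper additionally treats preimages containing just one of $u,v$, which your identity shows is not needed) reduces everything to $i_{G'}(Y')=i_G(Y)-c_Y$ and the single inequality $k+c_Y\le 8$; the necessity direction and the $|X|\ge 6$ enumeration go through exactly as you describe.

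The gap is in your middle paragraph and it affects sufficiency. The pair $(k,c_Y)=(6,3)$ does \emph{not} satisfy $k+c_Y\le 8$, so listing $(6,c_Y)$ unconditionally among the blocker pairs is an arithmetic slip, and your inequality does not ``reproduce precisely'' subcase 3(a): a 6-set $X$ containing $u,v$ and all three common neighbours gives $i_{G'}(X')=3|X|-6-3=3|X'|-6$, i.e.\ no violation, so the ``immediate edge count'' cannot show that such an $X$ forces non-admissibility. Indeed it does not force it: take the triangular bipyramid with apexes $u,v$ over the triangle $abc$ together with a sixth vertex $w$ joined to $u,a,b$. This graph is 3-sparse, its vertex set is a 6-set containing $u,v$ and all of $N_{u,v}=\{a,b,c\}$, yet the contraction of $\{u,v\}$ is again 3-sparse. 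So your blocker inequality is the correct characterisation, and subcase 3(a) as stated is slightly too generous (it should read ``at most two of $a,b,c$ are in $X$ and $X$ is a 6-set''); the paper's own proof also just declares sufficiency ``clear'' and overlooks this. Since every later application of the lemma invokes only the necessity direction, which your argument (and the over-generous form of 3(a)) does deliver, nothing downstream is affected -- but you should either correct 3(a) or record the discrepancy rather than asserting an exact match between your inequality and the stated conditions.
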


\begin{proof}
  In all three cases sufficiency is clear. For the necessity we prove the contrapositive. Let $u'$ be the vertex we obtain by contracting $e=uv$ and let $G'=(V',E')$ be the graph we obtain after the contraction. Let $Y'\subset V'$ with corresponding set $Y \subset V$. In each of the above cases we want to show $i(Y')\leq 3|Y'|-6$ for all $Y' \subset V'$ when the relevant subcases fail. In each of the cases if $u' \notin Y'$ then $Y' \subset V$ and $i_{G'}(Y')=i_G(Y)\leq 3|Y|-6$. Thus consider $u' \in Y'$, which implies either $u,v \in Y$ or without loss of generality $u \in Y,v \notin Y$. Let $u,v \in Y$. In all three cases our assumptions imply $i(Y')\leq 3|Y'|-6$. Thus let $u\in Y, v \notin Y$. Let $Y$ be an $l$-set and let $v$ have neighbours $t_1,\dots,t_k$ in $Y$. If $k<l-5$, then we have $i(Y')\leq 3|Y'|-6$. If $k\geq l-5$, then we consider $Y\cup v$ for each possible $Y$ not containing $v$ and obtain a contradiction to our assumptions. Since the 3-sparsity inequality is only required for sets of size at least 3, $|X|\geq 4$. If $|X|=4$ or $5$ either we contradict that $\{u,v\}$ is a vertex-pair or our assumption on the common neighbours of $\{u,v\}$. Thus $|X| \geq 6$.
\end{proof}

It will sometimes be convenient to refer to one of the sets arising in Lemmas \ref{lem:nonadmedge} and \ref{lem:nonadmpair} without specifying precisely which one. Hence we will say that $X\subset V$ is a \emph{blocker} if it satisfies at least one of the options in those two lemmas.

\section{Main result}
\label{sec:proof}

In this section we put our results together to prove Theorem \ref{thm:main}.
The proof is long so we split it across two subsections. We first prove several technical results about particular cases and then prove the theorem itself.

\subsection{Admissible sub-structures}

We start by ruling out certain sub-structures in the proof of the main theorem. The first is the main step in showing that we do not need to worry about 5-regular, 3-sparse graphs that contain a subgraph isomorphic to $K_4$.

\begin{proposition}\label{prop:k4admiss}
    Let $G=(V,E)$ be a 5-regular, 3-sparse graph in which every 6-set induces a complete graph. Suppose $G$ contains a subgraph isomorphic to $K_4$. Then $G$ has an admissible edge.
\end{proposition}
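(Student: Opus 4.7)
The plan is to exploit the fact that the $K_4$'s vertex set $K = \{a,b,c,d\}$ is itself a 6-set. Under the hypothesis that every 6-set induces a complete graph, the identity $\binom{n}{2} = 3n-6$ (valid only for $n\in\{3,4\}$) forces every 6-set in $G$ to have size at most 4. In particular no vertex $x \notin K$ can be adjacent to three vertices of $K$, because then $K \cup \{x\}$ would be a 6-set of size 5. So every external vertex has at most two neighbours in $K$, and by 5-regularity each vertex of $K$ has exactly two external neighbours; in particular, the 8 edges from $K$ to the exterior are distributed over at least four distinct external vertices.

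Next I would classify all possible blockers of a reducible edge $e = uv$ of $K$. Every such edge lies in exactly two triangles (the ones through the two opposite $K$-vertices), so Lemma \ref{lem:nonadmedge}(2) applies; its first option is vacuous under the size-4 bound on 6-sets, so any blocker must be a 7-set $X$ with $X \cap K = \{u,v\}$ and $|X| \geq 6$. Writing $X' = X \setminus \{u,v\}$, the identity $i(X) = 3|X|-7$ combined with $|N(u) \cap X'|,\,|N(v) \cap X'| \leq 2$ and the 3-sparsity of $X'$ forces both bounds to be tight and $X'$ to be a 6-set. Since 6-sets have size at most 4, this pins down $|X|=6$, $X' \cong K_4$, and all four external neighbours of $u$ and $v$ lying inside $X'$. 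Reducibility then forces $u,v$ to share no external neighbour, so $X'$ is precisely the set of four distinct external neighbours of $u,v$, forming a $K_4$.

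The remainder is a counting/degree argument. Each $v \in K$ has at most two non-reducible incident $K$-edges, because each of its two external neighbours can be a common neighbour of $v$ with at most one other vertex of $K$; hence every $v \in K$ is incident to at least one reducible edge. I split into two cases. First, suppose some $v$ has two or three reducible incident edges $vu_i$, all non-admissible: then an external neighbour $v_1$ of $v$ lies in every corresponding $K_4$-blocker, so $N(v_1)$ contains $\{v,v_2\} \cup \bigcup_i N_{\mathrm{ext}}(u_i)$. Analysing how these 2-element external sets can overlap without creating a vertex with $\geq 3$ neighbours in $K$ shows that either $\deg(v_1) \geq 6$, or a shared external vertex has degree $\geq 6$, or one is forced into the configuration $N_{\mathrm{ext}}(u_1) = N_{\mathrm{ext}}(u_2) = \{x,y\}$; in that last configuration the non-reducibility of the remaining $K$-edge $vu_3$ supplies an extra edge from $\{v_1,v_2\}$ to $u_3$, so the 8-vertex set $Y = K \cup \{v_1,v_2,x,y\}$ has $i(Y) \geq 19 > 18 = 3|Y|-6$, violating 3-sparsity. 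Secondly, if every $v \in K$ has exactly one reducible incident edge, then the reducible edges form a perfect matching $\{ab,cd\}$; non-admissibility of both forces all eight edges from $K$ to externals to hit exactly four distinct external vertices $\{p,q,r,s\}$ that form a $K_4$, and then $K \cup \{p,q,r,s\}$ carries $6+6+8=20$ edges on $8$ vertices, again violating 3-sparsity.

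The hard part will be the special subcase where a vertex $v \in K$ has two reducible incident edges $vu_1, vu_2$ and $u_1,u_2$ share both of their external neighbours: the degree of $v_1$ is exactly 5 in that configuration, so the naive degree bound on a single external vertex is not enough on its own, and one must bring in the third $K$-edge at $v$ (which is non-reducible, producing a common external neighbour of $v$ and $u_3$) to supply the extra edge needed to make 3-sparsity of $K \cup X'$ fail.
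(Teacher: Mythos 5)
Your proof is correct, and it takes a genuinely different route from the paper's. The paper first splits on whether some external vertex has two neighbours in the $K_4$, then works with edges running from the $K_4$ to such a vertex, repeatedly invoking the 7-set union/intersection machinery (Lemma \ref{lem:unionof77} together with Lemma \ref{lem:7sets}) to rule out the various blockers. You instead stay entirely inside the $K_4$: the observation that a complete $k$-set satisfies $\binom{n}{2}=3n-6$, hence has at most $4$ vertices, kills option (a) of Lemma \ref{lem:nonadmedge}(2) outright and, via the tightness chain $3|X|-7=i(X)\le i(X')+2+2+1\le 3|X|-7$, pins every blocker of a reducible $K_4$-edge down to a $6$-vertex $7$-set whose external part is exactly the $K_4$ on the four external neighbours of the edge's endpoints. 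From there the finish is elementary degree and edge counting (the $\deg\ge 6$ contradictions, the $i(Y)\ge 19>18$ and $i(Y)=20>18$ violations), with no union/intersection lemmas at all. I checked the delicate points: the three-reducible-edges subcase always forces either a vertex with three neighbours in $K$ or a shared external vertex of degree $\ge 6$; the surviving configuration $N_{\mathrm{ext}}(u_1)=N_{\mathrm{ext}}(u_2)=\{x,y\}$ really does need the non-reducible third edge $vu_3$ to supply the nineteenth edge, as you flag; and in the perfect-matching case the non-reducibility of $ac,ad,bc,bd$ combined with the "at most two neighbours in $K$" bound does force the four external neighbours to be distinct and shared as you claim. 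Your argument is shorter and more self-contained than the paper's; what the paper's approach buys is that it does not lean as heavily on the very strong consequence that complete $6$-sets have at most four vertices, so its template transfers more readily to settings where that collapse is unavailable.
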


\begin{proof}
Suppose $X\subset V$ is a 6-set with $G[X]=K_4$. Since every 6-set is complete, no vertex in $V \setminus X$ has more than 2 neighbours in $X$. Let $X=\{a,b,c,d\}$ and suppose for a contradiction that every reducible edge in $G$ is non-admissible.

Suppose there exists a vertex $v$ with 2 neighbours in $X$ and, without loss of generality, put $N_{G[X]}(v)=\{a,b\}$. Let $r$ be the final neighbour of the vertex $a$. Then $av$ is a reducible edge in either one or two triangles.
Suppose $vr \in E$ (i.e. $av$ is in exactly two triangles). By Lemma \ref{lem:nonadmedge}, there exists a 7-set $A$ with $|A|\geq 6$ such that $v,a \in A$ and $b,r\notin A$ and $A$ has minimum degree 3 otherwise there exists a 6-set with at least five vertices contradicting our hypotheses. Thus $c,d\in A$. Now $A\cup b$ is a 6-set with at least five vertices, a contradiction. 

Suppose then that $vr\notin E$ (i.e. $av$ is in exactly one triangle). By Lemma \ref{lem:nonadmedge}, there exists a 7-set $B$ with $|B|\geq 6$ such that $a,v \in B$ and $b\notin B$. If $B$ has minimum degree less than 3 then there exists a 6-set in $B$ on at least five vertices, a contradiction. So we may assume $B$ has minimum degree 3. If $c,d\in B$ then $b$ has four neighbours in $B$ and $B\cup b$ is a 6-set, another contradiction. Hence, without loss of generality, $c\notin B$. Now $d\in B$, otherwise $a$ has at most two neighbours in $B$. Since $a$ has three neighbours in $B$ we must have $r\in B$.

Now consider the final neighbour $s$ of $b$. Suppose $s=r$ and $vs\notin E$. Thus $av$ is a reducible edge in exactly one triangle. By Lemma \ref{lem:nonadmedge}, there exists a 7-set $D$ with $|D|\geq 6$ such that $a,v \in D$ and $b\notin D$. If $D$ is not a proper core then there exists a 6-set in $D$ on at least five vertices, a contradiction. So we may assume $D$ is a proper core and hence at least two of $c,d,s$ are in $D$. Then $b$ has four neighbours in $D$ and $D\cup b$ is a 6-set, a contradiction.

Hence we may suppose $s \neq r$.  By symmetry with the case above when $vr\in E$, we may suppose $vs\notin E$. Again we may obtain a 7-set $F$ containing $v,b,s$ with minimum degree 3, $a\notin F$ and $|\{c,d\}\cap F|=1$. Note also that $s\notin B$ (resp. $r\notin F$) otherwise $b$ (resp. $a$) has four neighbours in $B$ (resp. $F$). 
Suppose that $d\in F$. Since $vd\notin E$, Lemma \ref{lem:unionof77} implies that $|B\cap F|\geq 3$. Since $G[B\cap F]$ is not complete, this implies $B\cap F$ is not a 6-set. Since $ab\in E$, $d(B,F)\geq 1$. Lemma \ref{lem:unionof77} now implies that $B\cup F$ is a 6-set, contradicting the hypothesis that every 6-set induces a complete graph.

So $d\notin F$ and $c\in F$. Then $F\cup \{a,d\}$ is a 7-set, $a,v,d\in B\cap (F\cup \{a,d\})$ and $vd\notin E$. So we may apply the argument from the preceding paragraph.
Hence no vertex in $V \setminus X$ has more than one neighbour in $X$ and every edge in $G[X]$ is a reducible edge in exactly two triangles. First consider the edge $ab$. By Lemma \ref{lem:nonadmedge}, there exists a 7-set $J$ with $|J|\geq 6$ such that $a,b \in J$ and $c,d \notin J$. We obtain similar 7-sets for each edge in $G[X]$. We may assume that each 7-set induces a graph with minimum degree at least 3 otherwise we contradict the hypothesis that every 6-set induces a complete graph. 

Let $K$ be the 7-set with $b,c\in K$ and $a,d \notin K$. Since both $J$ and $K$ have minimum degree at least 3, $|J\cap K|\geq 3$. In addition using the facts that $d(J,K)\geq 1$ and $J\cup K$ has at least five vertices, by Lemma \ref{lem:unionof77}, we deduce that $d(J,K)=1$, $J\cup K$ is a 7-set and $J\cap K$ is a 6-set which induces a copy of $K_3$. Thus each of $a,b,c,d$ has two unique neighbours forming a $K_3$, by our assumption that no vertex in $Z-X$ has more than one neighbour in $X$. 
Let $L$ be the 7-set with $a,d\in L$ and $b,c \notin L$. Since both $L$ and $J\cup K$ have minimum degree at least 3, $|L\cap (J\cup K)|\geq 3$. In addition using the facts that $d(L,J\cup K)\geq 2$ and $L\cup (J\cup K)$ has at least five vertices, by Lemma \ref{lem:unionof77}, we deduce that $d(L,J\cup K)=2$ and $L\cup (J
\cup K)$ is a 6-set contradicting the hypothesis that every 6-set induces a complete graph. 
\end{proof}

We next establish two results that guarantee that certain intersecting blockers imply either $\mathcal{R}_3$-independence or that the graph is small. 

\begin{proposition}\label{prop:smallenough}
Let $G$ be a connected, 5-regular, 3-sparse graph which contains no 6-set. Suppose that every reducible vertex-pair is non-admissible. Let $Y$ be a a proper core which is a $j$-set for $j\in \{7,8\}$ and let $\{u,w\}$ be a reducible vertex-pair with $u\in Y$ and $w\notin Y$. Suppose that $Z$ is a blocker for $\{u,w\}$ and $G[Z]$ has minimum degree 3. If $Y\cup Z=\overline{Y\cup Z}=V$ then either $G$ is $\mathcal{R}_3$-independent or $|V|\leq 14$.   
\end{proposition}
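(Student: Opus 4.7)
The plan is a counting argument using $Y\cup Z=V$ together with the 5-regularity of $G$, followed by case analysis on $Y\cap Z$ via the union lemmas of Subsection \ref{subsec:structure}.

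Since $G$ is 5-regular, $i(V)=\tfrac{5}{2}|V|$, so $V$ is a $(|V|/2)$-set. Writing $Y\cap Z$ as an $n$-set, inclusion--exclusion for $i(V)=i(Y\cup Z)$ gives
\[
|V|=2(j+k)-2n-2d(Y,Z).
\]
By Lemma \ref{lem:nonadmpair} applied to $Z$ and the no-6-set hypothesis, $k\in\{7,8\}$, so $j+k\leq 16$. Since $G[Y]$ (a core) and $G[Z]$ both have minimum degree $3$, the ``moreover'' clauses of Lemmas \ref{lem:unionof77}, \ref{lem:unionof78} and \ref{lem:unionof88} apply, giving either $|Y\cap Z|=2$ with $ab\in E$, or $|Y\cap Z|\geq 3$.

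In the case $|Y\cap Z|\geq 3$, the no-6-set hypothesis forces $n\geq 7$ and $m:=|V|/2\geq 7$; the union lemmas also give $m+n=(j+k)-d(Y,Z)$. A short enumeration of the resulting tuples $(j,k,n,d(Y,Z))$ yields $|V|\leq 14$ except for a few configurations---roughly $j+k\in\{15,16\}$, $n=7$, $d(Y,Z)$ small---leaving $|V|\in\{16,18\}$. In the case $|Y\cap Z|=\{a,b\}$ with $ab\in E$, Lemma \ref{lem:8set} forces $j=k=8$, so $|V|=22-2d(Y,Z)$; using Lemma \ref{lem:small} (which gives $d(Y,V\setminus Y)=16-|Y|$), the proper-core property of $Y$ (which bounds $d(Y,V\setminus Y)$ above by $2(|V|-|Y|)$), and the fact that $\{a,b\}$ contributes at most $8$ cross edges, I would aim to force $d(Y,Z)\geq 4$ (hence $|V|\leq 14$) except in a few exceptional configurations.

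In all residual configurations with $|V|>14$, I would argue $\mathcal{R}_3$-independence. Theorem \ref{thm:bounded} applied to the 2-connected blocks of $G[Y]$ and $G[Z]$ (each has minimum degree at most $4$ and maximum degree at most $5$), combined with Lemma \ref{lem:intbridge}(\ref{it:intbridge:indep}) at cut-vertices, yields that $G[Y]$ and $G[Z]$ are both $\mathcal{R}_3$-independent. Then when $|Y\cap Z|\geq 3$ I would bridge via Lemma \ref{lem:intbridge}(\ref{it:intbridge:indep}) after possibly invoking Lemma \ref{lem:isosub} on a suitable minimally rigid induced subgraph of $G$ to rigidify the intersection; when $|Y\cap Z|=2$ I would iterate Lemma \ref{lem:intbridge}(\ref{it:intbridge:rank}) to add the cross-edges between $Y\setminus Z$ and $Z\setminus Y$ to the independent union. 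The main obstacle is handling these residual $|V|>14$ sub-cases, since $G[Y\cap Z]$ is not $\mathcal{R}_3$-rigid (no 6-set exists), so the bridging argument is delicate and likely requires the full strength of the non-admissibility hypothesis together with structural properties of maximal blockers to rule out these configurations.
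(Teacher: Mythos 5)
Your counting framework is essentially the paper's: reduce to showing $Y\cup Z$ is a $7$-set (equivalently, via Lemma \ref{lem:small}, $|V|\leq 14$), and run a case analysis on $Y\cap Z$ through Lemmas \ref{lem:unionof77}--\ref{lem:unionof78}. But the proposal has a genuine gap exactly where you flag "the main obstacle": the residual configurations with $|V|\in\{16,18,20\}$ are the entire content of the proposition, and the tools you propose for them cannot close them. Iterating Lemma \ref{lem:intbridge}(\ref{it:intbridge:rank}) only accounts for \emph{one} cross-edge (the lemma adds a single edge to a union whose intersection has at most $d-1$ vertices; after that the hypothesis is no longer available), so it handles $d(Y,Z)\leq 1$ but not $d(Y,Z)\geq 2$. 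And Lemma \ref{lem:isosub} cannot "rigidify the intersection": a minimally $\mathcal{R}_3$-rigid induced subgraph on $n\geq 3$ vertices is precisely a $6$-set, which the hypotheses exclude. So your bridging strategy collapses in exactly the cases that remain.

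The paper closes these cases with two ideas absent from your proposal. First, it argues in the contrapositive: assuming $G$ is not $\mathcal{R}_3$-independent, Corollary \ref{cor:circuit} makes $G$ an $\mathcal{R}_3$-circuit, so a \emph{single} application of Lemma \ref{lem:intbridge}(\ref{it:intbridge:indep}) or (\ref{it:intbridge:rank}) already yields a contradiction (one does not need to build up full independence of $G$); this disposes of $Y\cap Z=K_2$ with $d(Y,Z)\leq 1$, and of the subcases where a common neighbour of $\{u,w\}$ lets one enlarge $Z$ to make the intersection a $K_2$ with one cross-edge. Second, since $G$ has no $6$-set it is triangle-free, so Lemma \ref{lem:trifree} (Mantel) forces $|Y|,|Z|$ to be large; combined with the \emph{exact} count $|V|=2k$ from Lemma \ref{lem:small} this contradicts $|V|=16$ or $18$ in most remaining configurations. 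The last few subcases (e.g.\ $k=8$ with $|Y\cap Z|\geq 3$) are killed by degree counting against $5$-regularity and the essential edge-connectivity bound of Lemma \ref{lem:ess-ec}. Without these ingredients your argument does not terminate, so as written the proposal is an outline of the easy part plus an accurate description of what is missing, rather than a proof.
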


\begin{proof}
Suppose $Y\cup Z=\overline{Y\cup Z}=V$ and $G$ is not $\mathcal{R}_3$-independent. We will show that $|V|\leq 14$. First note that $G$ is an $\mathcal{R}_3$-circuit by Corollary \ref{cor:circuit}. Let $Y\cup Z$ be a $k$-set. Since $G$ contains no 6-set we have $k\geq 7$. 
The conclusion of the proposition follows from Lemma \ref{lem:small} when $k=7$. We complete the proof by showing this equality must hold. Suppose $k>7$.
Since $Y$ is a proper core and $G[Z]$ has minimum degree 3, $|Y\cap Z|\geq 2$. Let $Z$ be a $\ell$-set and $d:=d(Y,Z)$. We split the remainder of the proof into two cases based on $|Y\cap Z|$.\\

\noindent \textbf{Case 1. $Y\cap Z=\{a,b\}$ with $ab\in E$.}\\ 

Note first that if $d=0$ then, since $Y\cup Z=V$, we contradict the assumption that $G$ is an $\mathcal{R}_3$-circuit using Lemma \ref{lem:intbridge} (1).  Hence $d\geq 1$.
Also, since $Y\cap Z$ induces a copy of $K_2$, we can apply Lemma \ref{lem:intbridge} (1) and (2), to contradict the hypothesis that $G$ is an $\mathcal{R}_3$-circuit since $Y\cup Z =V$. Hence we may assume that $d\geq 2$. Furthermore Lemma \ref{lem:8set} implies that $j=8=\ell$.
Lemmas \ref{lem:unionof77}, \ref{lem:unionof88} and \ref{lem:unionof78} and the hypothesis that $G$ contains no 6-set now imply that $j=\ell=8$ and $k=11-d$ where $d\in \{2,3\}$.

Suppose first that $k=9$ and $d=2$. Then Lemma \ref{lem:small} implies that $|V|=18$. However, since $j=8=\ell$, Lemma \ref{lem:trifree} implies that $|V|\geq 20$, a contradiction.
Similarly when $k=8$, Lemma \ref{lem:small} implies that $|V|=16$ and we obtain the same contradiction.  
\\

\noindent \textbf{Case 2. $|Y\cap Z|\geq 3$.}\\

Lemmas \ref{lem:unionof77}, \ref{lem:unionof88} and \ref{lem:unionof78} and the hypothesis that $G$ contains no 6-set imply that either: $\{j,\ell\}=\{7,8\}$, $k=8$ and $d=0$; or $j=\ell=8$ and $k\leq 9-d$ with $d\in \{0,1\}$.
Suppose $k=9$. Then $Y\cap Z$ is a 7-set and $d=0$. Now $\{u,w\}$ must have at least one common neighbour not in $Z$, contradicting our assumptions that $d(Y,Z)=0$ and $Y\cup Z=V$.

Hence $k=8$. Then either: $Y$ and $Z$ are 8-sets, $d=0$ and $Y\cap Z$ is an 8-set; $Y$ and $Z$ are 8-sets, $d=1$ and $Y\cap Z$ is an 7-set; or $Y$ is a 7-set and $Z$ is an 8-set or vice-versa, $d=0$ and $Y\cap Z$ is an 7-set. 
If $Y$ and $Z$ are 8-sets, $d=0$ and $Y\cap Z$ is an 8-set, then we contradict the assumption that $Z$ is blocking a reducible vertex-pair. 
Now suppose $Y$ and $Z$ are 8-sets, $d=1$ and $Y\cap Z$ is an 7-set. Let $Y'=Y\setminus (Y\cap Z)$, then, by Lemma \ref{lem:ess-ec}, $d(Y',Y\cap Z)\geq 4$. Hence either $Y'$ is a copy of $K_2$ and $d(Y',Y\cap Z)= 4$ or $Y'$ is a 7-set and $d(Y',Y\cap Z)= 6$. The former contradicts the 5-regularity of $G$, since $Y\cup Z=V$. Suppose the latter, then, by Lemma \ref{lem:small}, $d(Y\cap Z, V\setminus (Y\cap Z))\leq 11$. Hence $d(Y\cap Z, Z\setminus(Y\cap Z))\leq 5$ and we contradict the 5-regularity of $G$.

Finally suppose $Y$ is a 7-set, $Z$ is an 8-set or vice-versa, $d=0$ and $Y\cap Z$ is an 7-set. Then, if $Z$ is an 8-set, $d\geq 1$, contradicting our assumption. Hence $Z$ is a 7-set and the pair $\{u,w\}$ with $u\in Y\cap Z$ and $w\in Z\setminus Y$ has exactly one common neighbour. By Lemma \ref{lem:small}, $d(Y\cap Z, V\setminus (Y\cap Z))\leq 11$. Let $Y'=Y\setminus (Y\cap Z)$ and $Z'=Z\setminus (Y\cap Z)$. By Claim \ref{claim:6set}, $Y'$ and $Z'$ both have at least two vertices. If one of $Y'$ and $Z'$ is a copy of $K_2$, then we contradict the 5-regularity of $G$. Hence $i(Y')\leq 3|Y'|-7$ and $i(Y')\leq 3|Y'|-7$, however then we contradict the fact that $d(Y\cap Z, V\setminus (Y\cap Z))\leq 11$.
\end{proof}

For the second we will apply the following lemma which will also be utilised in the proof of our main result. For $X\subset V$, let $\overline{X}$ denote the \emph{(core-)closure of $X$}, this is the set obtained from $X$ by recursively adding $w\in V-X$ if $w$ has at least 3 neighbours in $X$ until every vertex of $V-X$ not added to $X$ has at most 2 neighbours in $\overline{X}$. Note that the (core-)closure of a set is a core of the graph if and only if $G[X]$ has minimum degree 3.

\begin{lemma}\label{lem:union_singleton}
Let $G$ be a 5-regular, 3-sparse graph on at least 16 vertices which contains no 6-set. Let $Y\subset V$, with $|Y|\geq 6$, be a proper core that is also a $j$-set for $j\in \{7,8\}$ and let $\{u,w\}$ be a vertex-pair with $u\in Y$ and $w\notin Y$. Further let $Z\subset V$, with $|Z|\geq 6$, be a blocking $k$-set for $\{u,w\}$ with $k\in \{7,8\}$, such that no vertex in $V-Z$ has more than two neighbours in $Z$ and such that $Y\cap Z=\{u\}$. Then:
\begin{enumerate}
\item[(a)] $d_{G[Z]}(u)=2$, $Z$ is an 8-set and $\emptyset \neq N_{u,w}\subset Y$;
\item[(b)] $Y\cup Z$ is a $(j+5-d(Y,Z))$-set with $d(Y,Z)\leq j-2$;
\end{enumerate}
Furthermore suppose that $Y\cup Z\subsetneq \overline{Y\cup Z} = V$. Then:
\begin{enumerate}
\item[(c)] if $j=7$ and $d(Y,Z)=1$ then $G$ contains an admissible vertex-pair; and
\item[(d)] if $j=8$ and $d(Y,Z)\in \{1,2\}$ then $G$ contains an admissible vertex-pair.
\end{enumerate}
\end{lemma}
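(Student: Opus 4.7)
The central observation is that $Y$ is a proper core, so $d_{G[Y]}(u)\geq 3$; this forces $u$ to have at most two neighbours outside $Y$, and since $Y\cap Z=\{u\}$ these are exactly the neighbours of $u$ in $Z$, yielding $d_{G[Z]}(u)\leq 2$. If $k=7$, Lemma~\ref{lem:7sets} (whose premise on $6$-sets is vacuously satisfied under our no-$6$-set hypothesis) applied to $Z$ gives $d_{G[Z]}(u)\geq 3$, a contradiction. Hence $k=8$, i.e.\ $Z$ is an $8$-set. If additionally $d_{G[Z]}(u)\leq 1$ then $Z\setminus\{u\}$ would be a $6$-set on at least five vertices, again contradicting the hypothesis, so $d_{G[Z]}(u)=2$. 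Consequently all five neighbours of $u$ lie inside $Y\cup Z$ (three in $Y$ and two in $Z\setminus Y$), and since $Z$ is an $8$-set blocker, Lemma~\ref{lem:nonadmpair} forbids any common neighbour of $\{u,w\}$ from lying in $Z$; therefore $N_{u,w}\subset Y$. Nonemptiness follows from the fact that $\{u,w\}$ is reducible, which any blocker requires.

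\textbf{Plan for (b).} A short inclusion--exclusion suffices: $i(Y\cap Z)=0$ since $|Y\cap Z|=1$, so $i(Y\cup Z)=i(Y)+i(Z)+d(Y,Z)=(3|Y|-j)+(3|Z|-8)+d(Y,Z)$ using $k=8$ from part (a); substituting $|Y\cup Z|=|Y|+|Z|-1$ shows $Y\cup Z$ is a $(j+5-d(Y,Z))$-set. Requiring $Y\cup Z$ to contain no $6$-set forces this constant to be at least $7$, yielding $d(Y,Z)\leq j-2$.

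\textbf{Plan for (c) and (d).} Assume now $Y\cup Z\subsetneq \overline{Y\cup Z}=V$. The first vertex added in the closure process supplies $\alpha\in V\setminus(Y\cup Z)$ with at least three neighbours in $Y\cup Z$; by the proper-core property of $Y$ and the hypothesis on $Z$, this $\alpha$ has at most two neighbours in each of $Y$ and $Z$, hence has neighbours in both $Y\setminus Z$ and $Z\setminus Y$. Since $N(u)\subset Y\cup Z$ by part (a), $u\alpha\notin E$, so $\{u,\alpha\}$ is a vertex-pair; I will nominate as the admissible candidate either $\{u,\alpha\}$ or another reducible pair obtained by applying Lemma~\ref{lem:outside2} to the core $Y$, with the cross-edge count $d(Y,Z)\leq j-2$ from part (b) ensuring enough reducible pairs exist between $Y$ and $V\setminus Y$. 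The main obstacle, and what drives the case split between (c) and (d), is to rule out every potential blocker $X$ from Lemma~\ref{lem:nonadmpair}: for each such $X$ I plan to combine the sparsity of $Y\cup Z$ from part (b) (an $11$-set in case (c) and in case (d) with $d(Y,Z)=2$; a $12$-set in case (d) with $d(Y,Z)=1$), the union lemmas \ref{lem:unionof77}, \ref{lem:unionof88}, \ref{lem:unionof78} applied to $X$ intersected with $Y$ or $Z$, and the no-$6$-set hypothesis to force $X$ into a configuration that violates either $5$-regularity, the proper-core property of $Y$, or the size bound from Lemma~\ref{lem:small}. The separation by $d(Y,Z)$ reflects precisely how many cross-edges between $Y\setminus Z$ and $Z\setminus Y$ are accounted for by common neighbours of $\{u,w\}$ (bounded below by $|N_{u,w}|$ via part (a)), and this is what makes the delicate subcase analysis of blockers tractable.
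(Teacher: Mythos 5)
Your arguments for parts (a) and (b) are correct and essentially identical to the paper's: the proper-core property of $Y$ plus $5$-regularity bounds $d_{G[Z]}(u)\leq 2$, the no-$6$-set hypothesis (via Lemma \ref{lem:7sets} and the set $Z-u$) forces $Z$ to be an $8$-set with $d_{G[Z]}(u)=2$, Lemma \ref{lem:nonadmpair} pushes all common neighbours of $\{u,w\}$ out of $Z$ and hence into $Y$, and the inclusion--exclusion count for $i(Y\cup Z)$ gives (b). No complaints there.

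Parts (c) and (d), however, are where essentially all of the difficulty of this lemma lives, and you have not proved them: you state that you ``plan to'' rule out every potential blocker $X$ by combining the union lemmas with the no-$6$-set hypothesis, but you never carry out that analysis, and it is not routine. Concretely, two things are missing. First, your nominated candidate pair $\{u,\alpha\}$ (with $\alpha$ the first vertex absorbed by the closure) need not even be reducible, since nothing guarantees $\alpha$ is adjacent to a neighbour of $u$; the paper instead works with the pair $\{t,y\}$ where $t\in N(u)\cap Y$ and $y\in N(u)\cap Z$ are chosen to avoid the $d(Y,Z)$ cross-edges, which is what makes reducibility of the candidate provable (at most one common neighbour of $t,y$ besides $u$ when $d(Y,Z)=1$). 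Second, the heart of the proof is the contradiction derived from a hypothetical blocker $X$ for that pair: one must split on whether $X$ is a $7$- or $8$-set and whether $u\in X$, repeatedly intersect $X\cup Y$ with $Z-u$, and in part (d) additionally handle the subcases where $X\cup Y\cup Z$ is a $9$-, $10$- or $11$-set, using Lemma \ref{lem:trifree} to get lower bounds on $|V|$ that clash with Lemma \ref{lem:small}. None of these steps appears in your write-up, and the final contradiction in each branch is with the specific hypothesis $\overline{Y\cup Z}=V$ (via $\overline{X\cup Y\cup Z}\subsetneq V$), a mechanism your plan does not identify. As it stands, (c) and (d) are asserted rather than proved.
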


\begin{proof}
(a) Since $G$ is 5-regular and $Y$ is a proper core we have $d_{G[Z]}(u)=2$. It follows that $Z$ is an 8-set since $G$ contains no 6-set. Since $Z$ is an 8-set Lemma \ref{lem:nonadmpair} implies that there is a common neighbour and no common neighbour of $u$ and $w$ is in $Z$. Hence, as all neighbours of $u$ are in $Y\cup Z$, we have $\emptyset \neq N_{u,w}\subset Y$.

(b) Now, using (a), 
\begin{eqnarray*}
        i(Y\cup Z) &=& i(Y)+i(Z)+d(Y,Z)-i(Y\cap Z)\\
        &=& 3|Y|-j+3|Z|-8+d(Y,Z)\\
        &=& 3|Y\cup Z|+3|Y\cap Z|-j-8+d(Y,Z)\\
        &=& 3|Y\cup Z|+3-j-8+d(Y,Z)\\
        &=& 3|Y\cup Z|-j-5+d(Y,Z).
    \end{eqnarray*}
Hence $Y\cup Z$ is a $(j+5-d(Y,Z))$-set with $d(Y,Z)\leq j-2$ (since $G$ contains no 6-set).

(c) For the final two parts of the lemma we let $\{r,s,t\}=N(u)\cap Y$ and $\{x,y\}=N(u)\cap Z$. Suppose that $j=7$ and $d(Y,Z)=1$. Since $N_{u,w}\subset Y$ by (a), without loss of generality, we may assume $rw\in E$. Since $d(Y,Z)=1$, $\{t,y\}$ is a vertex-pair and since $Y$ is a proper core and $y$ has three neighbours in $Z-u$, $\{t,y\}$ is reducible. Lemma \ref{lem:nonadmpair} implies that there exists a blocker $X$ with $t,y\in X$ and $X$ is a 7- or 8-set. Suppose $X$ is a 7-set and $u\notin X$. Observe that $t$ has at most one neighbour outside of $Y$. By Lemmas \ref{lem:8set} and \ref{lem:7sets} we now have $|X\cap (Y-u)|\geq 3$ and hence $|X\cap Y|\geq 3$. 
Now Lemma \ref{lem:unionof77} implies that $X\cup Y$ is an $m$-set and $X\cap Y$ is an $n$-set with $m+n\leq 13$, since $uy\in E$. Hence we contradict our hypothesis that $G$ contains no 6-set. 

Hence suppose $u\in X$. By Lemma \ref{lem:7sets} $G[X]$ has minimum degree 3 and hence Lemma \ref{lem:8set} implies that $|X\cap Y|\geq 3$. Lemma \ref{lem:unionof77} implies that $X\cup Y$ is an $m$-set and $X\cap Y$ is an $n$-set with $m+n\leq 14$. It follows that $m=7$ and $n=7$. We now consider the 7-sets $X\cup Y$ and $Z-u$. Since $y$ has at most one neighbour outside of $Z$, we can again apply Lemmas \ref{lem:8set} and \ref{lem:7sets} to deduce that $|(X\cup Y)\cap (Z-u)|\geq 3$. Now Lemma \ref{lem:unionof77} implies that $(X\cup Y)\cup (Z-u)$ is a $m$-set and $(X\cup Y)\cap (Z-u)$ is a $n$-set with $m+n\leq 14$. Our hypothesis that $G$ has no 6-set implies $X\cup Y\cup Z$ is a 7-set. Since $|V|\geq 16$, Lemma \ref{lem:small} implies that $X\cup Y\cup Z\subsetneq V$. However $G$ has no 6-set so it follows that $\overline{X\cup Y\cup Z}\subsetneq V$ and this contradicts the hypothesis that $\overline{Y\cup Z} = V$.

Similarly if $X$ is an 8-set, $t$ has at least four neighbours in $Y$ so $|X\cap Y|\geq 2$. Again if $|X\cap Y|=2$ then we contradict our hypothesis. So $|X\cap Y|\geq 3$. Since $uy\in E$, Lemma \ref{lem:unionof78} now implies that $X\cup Y$ is a $m$-set and $X\cap Y$ is a $n$-set where $m+n\leq 14$.
Hence $X\cup Y$ is 7-set. Then we can consider the 7-sets $X\cup Y$ and $Z-u$ to obtain a contradiction to our hypothesis.

(d) Now suppose that $j=8$ and $d(Y,Z)=1$. (The case when $d(Y,Z)=2$ can be proved similarly, noting that the second edge either simplifies the proof or it is incident to $t$ or $y$ but in that case we can use the vertex-pair $\{s,x\}$ instead.)

We may suppose that $\{t,y\}$ is reducible and non-admissible. Lemma \ref{lem:nonadmpair} implies that there exists a blocker $X$ with $|X|\geq 6$ and $t,y\in X$, such that $X$ is an $\ell$-set with $\ell \in \{7,8\}$. Suppose $\ell=7$ and $u\notin X$. Since $t$ has at least two neighbours in $Y-u$ and at least three neighbours in $X$, we have $|X\cap (Y-u)|\geq 2$. Lemmas \ref{lem:8set} and \ref{lem:7sets} imply that $|X\cap (Y-u)|\geq 3$ and hence $|X\cap Y|\geq 3$. Now, since $uy\in E$, Lemma \ref{lem:unionof78} implies that $X\cup Y$ is a $m$-set and $X\cap Y$ is a $n$-set where $m+n\leq 14$. It follows that $m=7$ and $n=7$. Now consider the 7-sets $X\cup Y$ and $Z-u$. We can now use the same argument deployed in part (c) to contradict the hypothesis that $\overline{Y\cup Z} = V$.

So suppose $u\in X$.  By Lemmas \ref{lem:8set}, \ref{lem:7sets} and our hypothesis $|X\cap Y|\geq 3$. 
Lemma \ref{lem:unionof78} now implies that $X\cup Y$ is a $m$-set, $X\cap Y$ is a $n$-set and $m+n\leq 15$. By our hypothesis $X\cup Y$ is not a 9-set and if $X\cup Y$ is a 7-set, then we obtain the same contradiction as previously. Hence $m=8$ and $n=7$. Now consider $X\cup Y$ and $Z-u$. Again $|(X\cup Y)\cap (Z-u)|\geq 3$. By Lemma \ref{lem:unionof78}, we have that $(X\cup Y)\cup (Z-u)$ is an $m$-set and $(X\cup Y)\cap (Z-u)$ is an $n$-set with $m+n\leq 15$. Hence either $m=7$ or $m=8$. That is $X\cup Y \cup Z$ is a 7- or 8-set. If $X\cup Y \cup Z$ is a 7-set, then we again contradict the hypothesis that $\overline{Y\cup Z} = V$. Similarly if $X\cup Y \cup Z$ is an 8-set and $X\cup Y\cup Z\neq \overline{X\cup Y\cup Z}$ then we contradict the hypothesis that $\overline{Y\cup Z} = V$ using the hypothesis that $G$ contains no 6-set. Hence we have $X\cup Y\cup Z= \overline{X\cup Y\cup Z}=V=\overline{Y\cup Z}\neq Y\cup Z$ and $|V|=16$. However Lemma \ref{lem:trifree} implies that $|Z-u|\geq 9$ and $|Y-u|\geq 8$, which gives a contradiction. 

Hence we may suppose that $\ell=8$ and $u\notin X$. Then, similarly, $|X\cap (Z-u)|\geq 3$ and hence $|X\cap Z|\geq 3$. Since $ut\in E$ with $u\notin X$ and $t\notin Z$, we have $d(X,Z)\geq 1$. Lemma \ref{lem:unionof88} now implies that $X\cup Z$ is an $m$-set and $X\cap Z$ is an n-set with $m+n\leq 15$. Hence either $m=7$ or $m=8$. If $X\cup Z$ is a 7-set, then $|(X\cup Z)\cap Y|\geq 3 $ and Lemma \ref{lem:unionof78} implies that $(X\cup Z)\cup Y$ is a $m$-set and $(X\cup Z)\cap Y$ is a $n$-set where $m+n\leq 15$. If $X\cup Y \cup Z$ is a 7-set, then we again contradict the hypothesis that $\overline{Y\cup Z} = V$. Similarly if $X\cup Y \cup Z$ is an 8-set and $X\cup Y\cup Z\neq \overline{X\cup Y\cup Z}$ then we contradict the hypothesis that $\overline{Y\cup Z} = V$ using the hypothesis that $G$ contains no 6-set. 

Hence we have $X\cup Y\cup Z= \overline{X\cup Y\cup Z}=V=\overline{Y\cup Z}\neq Y\cup Z$ and $|V|=16$. Then we can apply the same argument as in the previous paragraph. 
Hence suppose $X\cup Z$ is an 8-set. Then $|(X\cup Z)\cap Y|\geq 2$. If $|(X\cup Z)\cap Y|=2$, then $(X\cup Z)\cap Y=\{u,t\}$ and $d(X\cup Z, Y)\leq 1$. That is $X\cup Y\cup Z$ is a 10-set or an 11-set. 
If $X\cup Y \cup Z \neq \overline{X\cup Y\cup Z}$ and $X\cup Y \cup Z$ is a 10-set, then, since $|V|\geq 16$ and we must add a degree 5 vertex to obtain $V$, $V$ is an 8-set with $|V|=16$. However Lemma \ref{lem:trifree} implies that $|V|\geq 20$, giving a contradiction. 
If $X\cup Y \cup Z \neq \overline{X\cup Y\cup Z}$ and $X\cup Y \cup Z$ is a 11-set, then, since we must add at least one degree 5 vertex to obtain the closure $|V|\leq 18$. However, by Lemma \ref{lem:trifree}, $V$ has at least 20 vertices, giving a contradiction.

If $X\cup Y\cup Z=\overline{X\cup Y\cup Z}=V$ and $X\cup Y\cup Z$ is a 10-set, then $V$ is a 10-set and $|V|=20$.  Since $(X\cup Z)\cap Y=\{u,t\}$, $X-t$ is a 7-set. We also have that $X\cap Z$ and $Z-u$ are 7-sets. By Lemma \ref{lem:small}, $|(X\cup Z)\setminus\{u,t\}|=9$. However by Lemma \ref{lem:trifree}, we obtain a contradiction. 

Hence suppose $X\cup Y\cup Z=\overline{X\cup Y\cup Z}=V$ and $X\cup Y\cup Z$ is an 11-set, then $V$ is a 11-set and $|V|=22$. Then $|Y\cup Z|\geq 20$, so $|X\setminus (Y\cup Z)|\leq 2$, which gives a contradiction to our assumption that $G$ has no 6-set. 
Hence $|(X\cup Z)\cap Y|\geq 3$. Then Lemma \ref{lem:unionof88} implies that $X\cup Y\cup Z$ is a 7-, 8- or 9-set. If $X\cup Y\cup Z$ is a 7-set we contradict our hypothesis that $G$ has at least 16 vertices. If $X\cup Y \cup Z$ is an 8-set then we can apply the same argument as in the previous paragraph. 
Hence suppose $X\cup Y\cup Z$ is a 9-set. If $X\cup Y \cup Z \neq \overline{X\cup Y\cup Z}$, then we contradict that $|V|\geq 16$, since we must have a degree 5 vertex outside of $X\cup Y\cup Z$ and hence $V$ is at most a 7-set. If $X\cup Y\cup Z=\overline{X\cup Y\cup Z}=V$, then $|V|=18$. However again, by Lemma \ref{lem:trifree}, $|V|\geq 20$, giving a contradiction. 
\end{proof}

\begin{proposition}\label{prop:smallenough2}
Let $G$ be a connected, 5-regular, 3-sparse graph which contains no 6-set. Suppose that every reducible vertex-pair is non-admissible. Let $Y$ be both a proper core and a $j$-set for $j\in \{7,8\}$ and let $\{u,w\}$ be a reducible vertex-pair with $u\in Y$ and $w\notin Y$. Suppose that $Z$ is a blocker for $\{u,w\}$, $|Y\cap Z|=1$ or $Y\cap Z=\{x,y\}$ and $xy\notin E$, and $G[Z]$ does not have minimum degree 3. If $Y\cup Z= \overline{Y\cup Z} = V$ then either $G$ is $\mathcal{R}_3$-independent or $|V|\leq 14$. 
\end{proposition}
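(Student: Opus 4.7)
My plan is as follows. Suppose for contradiction that $G$ is not $\mathcal{R}_3$-independent; then Corollary \ref{cor:circuit} gives that $G$ is an $\mathcal{R}_3$-circuit, so by Lemma \ref{lem:connected} it is 2-connected, and it suffices to show $|V|\le 14$. First I would observe that, since $|Z|\ge 6$, the graph $G$ has no 6-set, and $G[Z]$ lacks minimum degree 3, Lemma \ref{lem:7sets} forces $Z$ to be an 8-set (a 7-set on at least five vertices would have minimum degree 3 in the absence of 6-sets). A routine 3-sparsity calculation (any split of $Y$ or $Z$ into pieces of size at least 3 gives too few internal edges relative to $i(Y)=3|Y|-j$ or $i(Z)=3|Z|-8$, and the small-component cases are ruled out similarly) shows that $G[Y]$ and $G[Z]$ are both connected, have maximum degree at most 5, and minimum degree at most 4 (the last because $Y,Z\subsetneq V$ in a connected graph). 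Hence by Theorem \ref{thm:bounded}, both $G[Y]$ and $G[Z]$ are $\mathcal{R}_3$-independent.

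In the case $|Y\cap Z|=1$, say $Y\cap Z=\{u\}$: 2-connectivity forces $d(Y,Z)\ge 1$, else $u$ would be a cut vertex. The single-vertex intersection is $\mathcal{R}_3$-rigid, so I would apply Lemma \ref{lem:intbridge}(1) to deduce $G[Y]\cup G[Z]$ is $\mathcal{R}_3$-independent, then iterate Lemma \ref{lem:intbridge}(2) across the cross edges (redecomposing the graph at each step so that the shared vertex set stays of size at most 2) to conclude $r_3(G)=|E(G)|$, contradicting the circuit assumption.

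In the case $Y\cap Z=\{x,y\}$ with $xy\notin E$, inclusion-exclusion and Lemma \ref{lem:small} give $|V|=2(j+2-d(Y,Z))$. If $d(Y,Z)\ge j-5$, then $|V|\le 14$ and we are done; otherwise $d(Y,Z)\le j-6$, i.e.\ at most one cross edge when $j=7$ and at most two when $j=8$. I would then consider the augmented graph $H=G+xy$, whose induced subgraphs $H[Y]=G[Y]+xy$ and $H[Z]=G[Z]+xy$ remain 3-sparse (the no-6-set hypothesis bounds $i(X)\le 3|X|-7$ for $X\supseteq\{x,y\}$ with $|X|\ge 4$). The key degree check is $d_{G[Y]}(x)\le 4$: otherwise $x$ would have no neighbour in $Z\setminus Y$, forcing $d_{G[Z]}(x)=0$ and contradicting Lemma \ref{lem:uniondeg} applied to the 8-set $Z$; the symmetric bound holds for $y$ in $G[Y]$ and for $x,y$ in $G[Z]$. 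The only failure mode of the minimum-degree hypothesis of Theorem \ref{thm:bounded} applied to $H[Y]$ is that $H[Y]$ is 5-regular on $|Y|$ vertices; a degree count then forces $|Y|=2j-2$, and combining this with the identity $d(\{x,y\},Z\setminus Y)+d(Y,Z)=2j-|Y|=2$ forces $d(Y,Z)=0$ and $|Z|=8$. A further computation then yields that $G[Z\setminus\{x,y\}]$ carries 14 edges on 6 vertices, contradicting 3-sparsity. Therefore $H[Y]$ and $H[Z]$ are both $\mathcal{R}_3$-independent by Theorem \ref{thm:bounded}. Since $H[Y]\cap H[Z]$ contains the edge $xy$ (a rigid $K_2$), Lemma \ref{lem:intbridge}(1) combined with at most two applications of (2) across the cross edges would give that $H$ is $\mathcal{R}_3$-independent, and hence so is $G\subset H$, another contradiction.

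The main obstacles I anticipate are handling the iteration of Lemma \ref{lem:intbridge}(2) past a single cross edge --- made feasible by the at-most-two cross edges appearing in the critical subcases, via a careful redecomposition at each step --- and the delicate elimination of the 5-regular corner case for $H[Y]$ by an explicit edge-count on the residual induced subgraph $G[Z\setminus\{x,y\}]$.
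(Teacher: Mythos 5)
Your proposal has a genuine gap at its core step. In the case $Y\cap Z=\{u\}$ you propose to glue $G[Y]$ and $G[Z]$ with Lemma \ref{lem:intbridge}(1) and then ``iterate Lemma \ref{lem:intbridge}(2) across the cross edges, redecomposing so that the shared vertex set stays of size at most 2''. This iteration cannot be carried past two cross edges: once two cross edges $a_1b_1$, $a_2b_2$ with $\{a_1,a_2\}\subseteq Y\setminus\{u\}$, $\{b_1,b_2\}\subseteq Z\setminus\{u\}$ and distinct endpoints have been added, any decomposition $H_1\cup H_2$ of the current graph that covers both of them forces at least two of $b_1,b_2,a_1,a_2$ (in addition to $u$) into $V(H_1)\cap V(H_2)$, so the intersection has size at least $3>d-1$ and part (2) no longer applies. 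Since $Z$ is forced to be an $8$-set with $d_{G[Z]}(u)=2$, Lemma \ref{lem:union_singleton}(b) only bounds $d(Y,Z)$ by $j-2$, i.e.\ $d(Y,Z)$ can be $3$, $4$, $5$ (and Lemma \ref{lem:connected} gives $d(Y,Z)\geq 2$, so only $d(Y,Z)=2$ is handled by your gluing argument --- that subcase the paper also disposes of via Lemma \ref{lem:intbridge}(2), after first absorbing a common neighbour of $u,w$ into $Z$). The cases $d(Y,Z)\geq 3$ are exactly where the real work lies, and they cannot be settled by proving independence via gluing at all: the paper instead shows these configurations cannot exist, using $|V|=2k$ from Lemma \ref{lem:small} against the lower bounds on $|Y-u|$ and $|Z-u|$ from Lemma \ref{lem:trifree} (triangle-freeness following from the no-$6$-set hypothesis), plus, for $j=8$ and $d(Y,Z)=3$, a lengthy structural argument that produces a blocker $T$ for a second vertex-pair and contradicts Lemma \ref{lem:ess-ec}. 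None of this is present in your proposal.

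A secondary point: the case $Y\cap Z=\{x,y\}$ with $xy\notin E$, on which you spend most of your effort, is vacuous. Since $G[Z]$ fails to have minimum degree $3$ and $G$ has no $6$-set, $Z$ must be an $8$-set with a \emph{unique} vertex of degree less than $3$ (two such vertices would leave a $6$-set behind). But $x$ and $y$ each have at least three neighbours in the proper core $Y$ and are non-adjacent, so each has at most two neighbours in $Z$, giving two vertices of degree at most $2$ in $G[Z]$ --- a contradiction. So the only surviving case is $Y\cap Z=\{u\}$, and your argument there does not close.
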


\begin{proof}
Suppose $Y\cup Z= \overline{Y\cup Z} = V$ and $G$ is not $\mathcal{R}_3$-independent. We will show that $|V|\leq 14$. First note that $G$ is an $\mathcal{R}_3$-circuit by Corollary \ref{cor:circuit}. Let $Y\cup Z$ be a $k$-set. Since $G$ contains no 6-set we have $k\geq 7$. The conclusion of the proposition follows from Lemma \ref{lem:small} when $k=7$. We complete the proof by showing this equality must hold. Suppose $k>7$.
By our hypotheses, we may suppose $G[Z]$ has a unique vertex of degree less than 3, this vertex has degree 2 and $Z$ is an 8-set.
By the hypotheses $|Y\cap Z|=1$ or $Y\cap Z=\{x,y\}$ and $xy\notin E$.
If $Y\cap Z=\{x,y\}$ and $xy\notin E$, then we contradict 5-regularity since $Z$ has a unique vertex of degree 2. Hence we may assume $Y\cap Z=\{u\}$. 
Then Lemma \ref{lem:union_singleton} (b) implies that $k=j+5-d(Y,Z)$ with $d(Y,Z)\leq j-2$.
Since $|Y\cap Z|=1$, Lemma \ref{lem:connected} implies that $d(Y,Z)\geq 2$. So in each case we have a $k$-set with $k\leq 11$. Let $d:=d(Y,Z)$. We split the remainder of the proof into cases based on $j$.\\

\noindent \textbf{Case 1. $j=7$.}\\
Lemma \ref{lem:union_singleton} (b) implies that $2\leq d\leq 4$. Suppose first that $d=2$. Then let $a \in Y$ be a common neighbour of $\{u,w\}$, since $Y\cup Z = V$. Then $ua \in E$ and observe that $Y\cap (Z\cup \{a\})=\{u,a\}$ with $d(Y, Z\cup \{a\})= 1$. We may now apply Lemma \ref{lem:intbridge} (2) to deduce that $G$ is $\mathcal{R}_3$-independent, contradicting that $G$ is an $\mathcal{R}_3$-circuit. 

Next suppose $d=3$. By Lemma \ref{lem:union_singleton} (b), $Y\cup Z$ is a 9-set, which implies $|V|=18$ by Lemma \ref{lem:small}. By Lemma \ref{lem:trifree}, $|Z-u|\geq 9$ and $|Y-u|\geq 9$, hence $|Y\cup Z|\geq 20$, a contradiction.
Similarly when $d=4$, $|V|=16$ and again Lemma \ref{lem:trifree} implies that $|Y\cup Z|\geq 20$, a contradiction.
\\

\noindent \textbf{Case 2. $j=8$.}\\ 
Lemma \ref{lem:union_singleton} (b) implies that $2\leq d \leq 5$. First suppose $d=2$. Then $\{u,w\}$ has a common neighbour $a\in V-Z$. Since $Y\cup Z=V$, $a\in Y$. Then $au\in E$ and observe that $Y\cap (Z\cup \{a\})=\{u,a\}$ with $d(Y, Z\cup \{a\})=1$. We may now apply Lemma \ref{lem:intbridge} (2) to deduce that $G$ is $\mathcal{R}_3$-independent, contradicting that $G$ is an $\mathcal{R}_3$-circuit.

Now suppose $d=3$. Then again $\{u,w\}$ has a common neighbour $a\in Y$. Observe that $Y\cap (Z\cup \{a\})=\{u,a\}$ with $d(Y, Z\cup \{a\})\leq 2$ and $a$ has two neighbours in $Y\setminus \{u,a\}$, since $Y$ has minimum degree 3. If $a$ has another neighbour in $Z-u$, then $d(Y, Z\cup \{a\})=1$ and we can apply Lemma \ref{lem:intbridge} (2) to obtain a contradiction. Hence suppose $a$ has a third neighbour $t$ in $Y\setminus \{u,a\}$, so $d(Y, Z\cup \{a\})=2$, $Y\setminus \{u,a\}$ is an 8-set and $Z-u$ is a 7-set. Consider the pair $\{t,w\}$. By our hypothesis $tw\notin E$. Thus $\{t,w\}$ is a reducible vertex-pair and we obtain a blocker $T$ by Lemma \ref{lem:nonadmpair}, since every reducible vertex-pair is non-admissible. Suppose first that $T$ is a 7-set and $a\notin T$, then, by Lemma \ref{lem:ess-ec}, $u\in T$. We consider $d_{G[T]}(u)$ and in each case contradict Lemma \ref{lem:ess-ec}. We can apply the same argument if $a\in T$ and $u\notin T$. Hence suppose $a,u\in T$. By our hypothesis $d_{G[T]}(u)\geq 3$ and $d_{G[T]}(a)\geq 3$. If both $u,a$ have degree 3 in $T$, then we contradict our hypothesis. We contradict Lemma \ref{lem:ess-ec} if one of exactly one of $u,a$ has degree 3. If $d_{G[T]}(u)=d_{G[T]}(a)=4$, then again we contradict Lemma \ref{lem:ess-ec}. If exactly one of $u,a$ has degree 5 in $T$, then $T\setminus \{u,a\}$ is a 9-set and again we contradict Lemma \ref{lem:ess-ec}. Hence both $u,a$ have degree 5 in $T$ and $T\setminus \{u,a\}$ is a 10-set and we obtain a 6-set contradicting our hypothesis. Hence $T$ is an 8-set with $|T|\geq 6$, $t,w \in T$ and $a \notin T$. By Lemma \ref{lem:ess-ec}, $u\in T$. Since $a\notin T$, $d_{G[T]}(u)\leq 4$ and $T-u$ always contradicts Lemma \ref{lem:ess-ec}.

Next suppose $d=4$. Again $\{u,w\}$ has at least one common neighbour $a\in Y$. Observe that $Y\cap (Z\cup \{a\})=\{u,a\}$ with $d(Y, Z\cup \{a\})\leq 3$. Suppose $a$ has a second neighbour in $Z$, then $d(Y, Z\cup \{a\})=2$. Then Lemma \ref{lem:trifree} implies $|Z-u|\geq 9$ and $|Y-\{u,a\}|\geq 9$ and hence $|V|\geq 20$, which contradicts that $V$ is a 9-set, by Lemma \ref{lem:union_singleton} (b). 
Hence $a$ has exactly one neighbour in $Z$ and $d(Y, Z\cup \{a\})=3$. Again, by Lemma \ref{lem:trifree}, $|Z-u|\geq 9$ and, since $|Y-u|\geq 8$, we have $|Y-\{u,a\}|\geq 8$. Hence we contradict that $V$ is a 9-set. 

Finally suppose $d=5$. By Lemma \ref{lem:union_singleton} (b), $Y\cup Z$ is an 8-set, which implies $|V|=16$ by Lemma \ref{lem:small}. By Lemma \ref{lem:trifree}, $|Z-u|\geq 9$ and $|Y-u|\geq 8$, hence $|Y\cup Z|\geq 18$, a contradiction.
\end{proof}

\subsection{Proof of the main result}
\label{subsec:proof}

We need one computational result.

\begin{lemma}\label{lem:small_compute}
Let $G$ be connected, 5-regular and 3-sparse on 12 or 14 vertices. Then $G$ is $\mathcal{R}_3$-independent.
\end{lemma}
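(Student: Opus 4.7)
The proof is a finite computer verification. First observe that a 5-regular graph on $n$ vertices has exactly $5n/2$ edges, forcing $n$ to be even. On 12 vertices, 3-sparsity then forces exactly 30 edges, matching the bound $3|V|-6$ with equality, so every such graph is 3-tight; on 14 vertices, 5-regularity gives 35 edges, one short of the 3-tight bound. In both cases only finitely many isomorphism classes need to be inspected.

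The plan is to enumerate all connected 5-regular graphs on 12 and 14 vertices using standard software (for instance \texttt{genreg} by Meringer, or \texttt{nauty}'s \texttt{geng}), filter for 3-sparsity by checking $i(X)\leq 3|X|-6$ for all induced subgraphs, and for each surviving graph $G=(V,E)$ verify $\mathcal{R}_3$-independence by choosing a random $p:V\to \mathbb{Z}^3$, building the rigidity matrix $R_3(G,p)$, and computing its rank modulo a large prime. Since the locus $\{p:\rank R_3(G,p)=|E|\}$ is Zariski-open, exhibiting a single $p$ at which equality holds certifies $\mathcal{R}_3$-independence of $G$. As a sanity check, one notes that $K_{6,6}^-$ is among the 12-vertex candidates and its independence in $\mathcal{R}_3$ is already well documented in the literature.

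The main practical obstacle is the size of the enumeration on 14 vertices (several hundred thousand connected 5-regular graphs exist), but the 3-sparsity filter reduces the count dramatically and each rank test is a cheap linear-algebra computation. The workload can be trimmed further by first applying the vertex-split, spider-split, and isostatic-substitution lemmas (\cref{lem:vsplit,lem:isosub}) to certify independence of any graph obtainable from a smaller $\mathcal{R}_3$-independent graph, so that only the genuinely "irreducible" cases require a direct rank computation. Since both the enumeration and the rank checks are routine and deterministic, the resulting verification can be packaged as a short script whose output establishes the lemma.
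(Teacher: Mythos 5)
Your proposal is essentially the paper's own proof: a finite enumeration of connected 5-regular graphs on 12 and 14 vertices (via Nauty) followed by a rank computation of the rigidity matrix for each 3-sparse candidate; the paper uses a symbolic rank computation in PyRigi where you propose a randomized integer evaluation certified by Zariski-openness, but both are valid certificates of generic independence. The only slip is numerical: there are about 3.46 million connected 5-regular graphs on 14 vertices, not several hundred thousand, though this does not affect the argument.
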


\begin{proof}
It can be checked, for example using Nauty \cite{Nauty}, that there are 7848 connected, 5-regular graphs on 12 vertices and 3459383 connected, 5-regular graphs on 14 vertices\footnote{Some of these graphs are not 3-sparse, and some do not satisfy the conditions we can prove a minimal counterexample must have, but we did not calculate exactly how many in either case.}. One can compute the rank of the rigidity matrix with symbolic entries in, for example, PyRigi \cite{Pyrigi} to verify that each of these connected, 3-sparse and 5-regular graphs are $\mathcal{R}_3$-independent.    
\end{proof}

We now prove Theorem \ref{thm:main}.

\begin{proof}[Proof of Theorem \ref{thm:main}]
Lemma \ref{lem:max} gives the necessity.
For the sufficiency we suppose the theorem is false and let $G$ be a minimal counterexample. Corollary \ref{cor:circuit} implies that $G$ is a $\mathcal{R}_3$-circuit.
Since $G$ is 5-regular and $3$-sparse, we have $|V|\geq 12$. Lemma \ref{lem:small_compute} now implies that $|V|\geq 16$.

\begin{claim}\label{claim:nonadm}
Every reducible edge and vertex-pair in $G$ is non-admissible.
\end{claim}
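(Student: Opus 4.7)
The approach I would take is by contradiction: suppose $G$ contains an admissible reducible edge $e = uv$ or an admissible reducible vertex-pair $\{u, v\}$, and let $H$ be the graph obtained by the corresponding edge contraction or spider contraction, with merged vertex denoted $u'$. By the definition of admissibility, $H$ is $3$-sparse, so the only thing standing in the way of a contradiction is to certify that $H$ is $\mathcal{R}_3$-independent. The plan is to apply Theorem \ref{thm:1highdeg}.

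To that end, I first need to check the degree sequence of $H$. Since $G$ is $5$-regular, a direct computation gives $d_H(u') = 8 - |N_{u,v}|$ in the edge contraction case and $d_H(u') = 10 - |N_{u,v}|$ in the spider contraction case. Reducibility forces $|N_{u,v}| \in \{1,2\}$ for an edge and $|N_{u,v}| \in \{1,2,3\}$ for a vertex-pair, so $d_H(u') \in \{6,7,8,9\}$ in every case. Each common neighbour of $u$ and $v$ loses exactly one incident edge under the contraction and so has degree $4$ in $H$, while every other vertex keeps degree $5$. Since $|N_{u,v}| \geq 1$ by reducibility, $H$ has minimum degree at most $4$ and exactly one vertex, $u'$, of degree greater than $5$.

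To apply Theorem \ref{thm:1highdeg} I still need to verify $H$ is $2$-connected. Lemma \ref{lem:connected} applied to the $\mathcal{R}_3$-circuit $G$ asserts that $G - e'$ is $2$-connected for every edge $e'$, and a short argument (using $|V(G)| \geq 16$, so that for any candidate $2$-cut there exists an edge disjoint from it) upgrades this to the $3$-connectedness of $G$. A $2$-cut of $H$ avoiding $u'$ lifts directly to a $2$-cut of $G$, which is impossible; a $2$-cut of the form $\{u', y\}$ in $H$ corresponds to a $3$-cut $\{u,v,y\}$ in $G$. If $G[\{u,v,y\}]$ is a triangle (i.e.\ $y \in N_{u,v}$), then since this $K_3$ is $\mathcal{R}_3$-rigid and both sides of the cut are proper edge subgraphs of $G$ (hence $\mathcal{R}_3$-independent as $G$ is a circuit), Lemma \ref{lem:intbridge}(\ref{it:intbridge:indep}) gives that $G$ itself is $\mathcal{R}_3$-independent, a contradiction. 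The remaining configurations of $G[\{u,v,y\}]$ are ruled out by combining $3$-sparsity with the circuit property via Lemma \ref{lem:intbridge}.

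Once $H$ is known to be $2$-connected, Theorem \ref{thm:1highdeg} applies and yields that $H$ is $\mathcal{R}_3$-independent. Since $G$ is recovered from $H$ by a $3$-dimensional vertex split (from edge contraction) or spider split (from spider contraction), Lemma \ref{lem:vsplit} then implies $G$ is $\mathcal{R}_3$-independent, contradicting the assumption that $G$ is a counterexample. The main obstacle I expect is precisely the verification that $H$ is $2$-connected, in particular ruling out a $\{u', y\}$-cut; once this is handled, the rest of the argument is a direct assembly of Theorem \ref{thm:1highdeg} with Lemma \ref{lem:vsplit}.
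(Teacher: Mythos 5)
Your overall route is exactly the paper's: assume an admissible reducible edge or vertex-pair exists, note that the contracted graph $H$ is $3$-sparse with minimum degree $4$ and exactly one vertex of degree exceeding $5$, apply Theorem \ref{thm:1highdeg}, and lift back via Lemma \ref{lem:vsplit}. Your degree bookkeeping is correct and usefully expands the paper's ``it is easy to see''. (A minor point you and the paper both elide: when the edge lies in only one triangle, or the pair has fewer than three common neighbours, $G$ is a proper spanning subgraph of the vertex/spider split of $H$ rather than equal to it, so one also invokes that subgraphs of $\mathcal{R}_3$-independent graphs are $\mathcal{R}_3$-independent.)

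The gap is in the $2$-connectivity verification, which you rightly single out as the real content. First, your proposed upgrade of Lemma \ref{lem:connected} to $3$-connectedness of $G$ does not work: if $\{x,y\}$ is a $2$-vertex-cut of $G$ and $e'$ is an edge disjoint from it, then $\{x,y\}$ remains a $2$-vertex-cut of $G-e'$, but $2$-connectivity of $G-e'$ forbids only cut \emph{vertices}, so no contradiction is obtained. Indeed $\mathcal{R}_3$-circuits need not be $3$-connected --- the $2$-sum of two copies of $K_5$ mentioned after Theorem \ref{thm:1highdeg} is an $\mathcal{R}_3$-circuit with a non-adjacent $2$-vertex-cut --- so any such claim must use $5$-regularity, which your sketch does not. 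Second, you are aiming at the wrong target: Theorem \ref{thm:1highdeg} needs only that $H$ has no cut vertex. For a candidate cut vertex $y\neq u'$ this is immediate, since $H-y$ is a contraction of the connected graph $G-y$; the only genuine case is $y=u'$, i.e.\ showing $G-\{u,v\}$ is connected. When $uv\in E$ this follows from Lemma \ref{lem:intbridge}(\ref{it:intbridge:indep}) with $G_1\cap G_2=K_2$, much as you indicate. But when $\{u,v\}$ is a non-adjacent pair forming a $2$-cut, your appeal to ``combining $3$-sparsity with the circuit property via Lemma \ref{lem:intbridge}'' is not yet an argument: Lemma \ref{lem:intbridge}(\ref{it:intbridge:rank}) applied to such a separation yields $r_3(G+ab)=r_3(G)+1$, which is perfectly consistent with $G$ being a circuit. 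This residual case needs a genuine argument (e.g.\ using $5$-regularity and Lemma \ref{lem:connected} to pin down the degrees of $u,v$ into each side and then a rank or counting contradiction); as written it is a hole, albeit located precisely at the step the paper itself waves through.
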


\begin{proof}[Proof of Claim \ref{claim:nonadm}.]
Suppose $G/e$ (resp. $G/\{u,v\}$) is 3-sparse. Then by the definition of reducible it is easy to see that $G/e$ (resp. $G/\{u,v\}$) satisfies the hypotheses of 
Theorem \ref{thm:1highdeg} and hence $G/e$  (resp. $G/\{u,v\}$) is $\mathcal{R}_3$-independent. Therefore Lemma \ref{lem:vsplit} implies that $G$ is $\mathcal{R}_3$-independent, a contradiction.
\end{proof}

\begin{claim}\label{claim:6set}
Every 6-set $X\subsetneq V$ in $G$ induces a copy of $K_3$. 
\end{claim}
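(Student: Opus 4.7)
The plan is to argue by contradiction: suppose some 6-set $X \subsetneq V$ has $|X| \geq 4$, and choose such an $X$ of maximum cardinality. By Lemma \ref{lem:uniondeg} every vertex of $X$ has at least 3 neighbours inside $X$, so by 5-regularity every $z \in V \setminus X$ has at most 2 neighbours in $X$; thus $X$ is a proper core of $G$. Because $|V| \geq 16 > 12 = |V(K_{6,6}^-)|$, we have $G \neq K_{6,6}^-$, and so Lemma \ref{lem:outside2} supplies a reducible edge $uw$ or reducible vertex-pair $\{u,w\}$ with $u \in X$ and $w \in V \setminus X$. By Claim \ref{claim:nonadm} this reduction is non-admissible, and Lemma \ref{lem:nonadmedge} or Lemma \ref{lem:nonadmpair} then produces a blocker $Z$ with $\{u,w\} \subseteq Z$, $|Z| \geq 6$, and $Z$ a $k$-set for some $k \in \{6, 7, 8\}$.

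The case $k=6$ is immediate: since $|X|, |Z| \geq 4$ and $u \in X \cap Z$, Lemma \ref{lem:unionof66} gives that $X \cup Z$ is itself a 6-set, and since $w \in Z \setminus X$ this 6-set strictly contains $X$, contradicting the maximality of $X$. For $k \in \{7, 8\}$ the argument is a case analysis on the pair $(|X \cap Z|, d(X, Z))$ using the inclusion--exclusion identity
\[ i(X \cup Z) + i(X \cap Z) = 3|X| + 3|Z| - 6 - k + d(X, Z), \]
which, together with 3-sparsity applied to both $X \cup Z$ and $X \cap Z$ and the maximality of $X$ (forbidding $X \cup Z$ from being a 6-set), constrains the sparsity deficits of both $X \cup Z$ and $X \cap Z$ in each configuration. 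Many sub-cases (for instance, $k=7$ with $|X\cap Z|\geq 3$ and $d(X,Z)=1$, or $k=8$ with $d(X,Z)=2$) are ruled out immediately because the only consistent option would force $X \cup Z$ to be a 6-set strictly enlarging $X$.

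The main technical obstacle is the remaining configurations, in particular those with $|X \cap Z| \leq 2$, where $X \cup Z$ is not forced to be a 6-set and no direct contradiction with maximality is available. There the contradiction must be obtained by combining the detailed structure of the blocker (the forbidden positions of triangle vertices and common neighbours of the reducible structure, as specified by Lemmas \ref{lem:nonadmedge} and \ref{lem:nonadmpair}), the 5-regularity of $G$, and the core-closedness of $X$, in order to either force a violation of 5-regularity, produce a second 6-set that together with $X$ and Lemma \ref{lem:unionof66} contradicts maximality, or expose a $2$-vertex-separation of $G$. In the last case the separation contradicts, via Lemma \ref{lem:connected}, the $\mathcal{R}_3$-circuit property of $G$ supplied by Corollary \ref{cor:circuit}.
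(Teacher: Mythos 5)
Your reduction to a maximal 6-set $X$ with $|X|\geq 4$, and the disposal of the case where the blocker $Z$ is itself a 6-set via Lemma \ref{lem:unionof66}, are fine; but the proof stops exactly where the difficulty begins. For $k\in\{7,8\}$ you state that the remaining configurations, in particular those with $|X\cap Z|\leq 2$, ``must be obtained by combining'' the blocker structure, 5-regularity and core-closedness, without carrying out any of these arguments. This is not a deferral of routine bookkeeping: when $X\cap Z=\{u\}$ or $X\cap Z$ is a non-adjacent pair, inclusion--exclusion gives essentially no leverage, and almost all of the paper's machinery for intersecting 7- and 8-sets (Lemmas \ref{lem:7sets}, \ref{lem:8set}, \ref{lem:ess-ec}, \ref{lem:no_triangle_intersect} and the propositions of Section \ref{sec:proof}) is stated under the hypothesis that 6-sets are absent or induce triangles --- precisely what you are trying to prove --- so it is unavailable while the large 6-set $X$ is still standing. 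There is also a smaller error earlier: that every $z\in V\setminus X$ has at most two neighbours in $X$ does not follow from 5-regularity; it follows from the maximality of $X$ together with 3-sparsity (three neighbours would enlarge $X$ to a bigger 6-set, four would violate sparsity).

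The paper avoids this case analysis entirely and proves the claim by a different mechanism. If a 6-set $X$ is not complete one picks $t,a,b\in X$ with $at\in E$ and $ab\notin E$, and shows (using Lemma \ref{lem:unionof66} applied to two hypothetical violating 6-sets) that at least one of the edge swaps $G-at+ab$, $G-at+tc$ is 3-sparse; Theorem \ref{thm:1highdeg} then makes the swapped graph $\mathcal{R}_3$-independent, and the isostatic substitution principle (Lemma \ref{lem:isosub}) transfers independence back to $G$, contradicting that $G$ is a counterexample. Having shown every 6-set is complete, the only possibilities are $K_3$ and $K_4$, and the latter is excluded by Proposition \ref{prop:k4admiss} together with Claim \ref{claim:nonadm}. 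Neither Lemma \ref{lem:isosub} nor Proposition \ref{prop:k4admiss} appears in your argument, and without them (or a genuinely completed combinatorial case analysis in their place) the claim is not established.
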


\begin{proof}[Proof of Claim \ref{claim:6set}.]
We first show that every such 6-set in $G$ induces a complete graph.
Suppose that $X\subsetneq V$ is a 6-set and $G[X]$ is not complete. Then $|X|\geq 5$. Let $t,a,b\in X$ be chosen so that $at\in E$ and $ab\notin E$.
Suppose that $G'=G-at+ab$ is not 3-sparse.
Then there exists $X'\subset V$ with $i_{G'}(X')\geq 3|X'|-5$. Since $G$ is 3-sparse, $a,b\in X'$ and $t\notin X'$, then we contradict the 3-sparsity of $G$ except when $i_{G'}(X')= 3|X'|-5$ and $X'$ is a 6-set in $G$. Since $X'$ is a 6-set containing non-adjacent vertices $a,b$ we have $|X'|\geq 5$. Since $G$ is 3-sparse, $t$ has at most 3 neighbours in $X'$. Hence there exists $c\in X'\setminus \{a,b\}$ such that $ct\notin E$. 

Consider $G''=G-at+tc$. As above there exists a 6-set $X''$ with $c,t\in X''$ and $a\notin X''$. Since $|X''|\geq 5$, $c\in X'\cap X''$ and $d(X',X'')\geq 1$ (as $at\in E$), Lemma \ref{lem:unionof66} gives a contradiction.
Thus at least one of $G-at+ab$ and $G-at+tc$ is a connected 3-sparse graph and hence $\mathcal{R}_3$-independent by Theorem \ref{thm:1highdeg}. By Lemma \ref{lem:isosub}, $G$ is $\mathcal{R}_3$-independent, contradicting the assumption that $G$ is a counterexample to the theorem. Hence $X$ induces a complete graph.

Suppose next, for a contradiction, that one such set $X$ induces a copy of $K_4$. Then Proposition \ref{prop:k4admiss} implies that there is an admissible edge. This contradicts Claim \ref{claim:nonadm}.
\end{proof}

\begin{claim}\label{claim:nok4-e}
$G$ contains no subgraph isomorphic to $K_4-e$.
\end{claim}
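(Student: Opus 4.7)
Suppose for contradiction that $G$ contains a copy of $K_4-e$ on vertices $\{a,b,c,d\}$ with $ab\notin E$ and $ac,ad,bc,bd,cd\in E$. The strategy is to produce two blockers---one containing $\{a,b\}$ and excluding $\{c,d\}$, and one containing $\{c,d\}$ and excluding $\{a,b\}$---and to derive a contradiction from the four edges $ac,ad,bc,bd$ crossing between them via the union lemmas.

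Begin with the edge $cd$, which lies in exactly the triangles $\{a,c,d\}$ and $\{b,c,d\}$ when $|N_{c,d}|=2$. Then $cd$ is reducible, so by Claim~\ref{claim:nonadm} and Lemma~\ref{lem:nonadmedge}(2) there is a blocker $W$ with $c,d\in W$ and $|W|\geq 6$. The 6-set option is impossible: Claim~\ref{claim:6set} forces a proper 6-set to have size $3$, while $W=V$ with 5-regularity forces $|V|=12$, contradicting $|V|\geq 16$. Hence $W$ is a 7-set with $a,b\notin W$, and Lemma~\ref{lem:7sets} gives $G[W]$ minimum degree $3$. A parallel argument for the vertex-pair $\{a,b\}$ (whose common neighbours include $c,d$) using Lemma~\ref{lem:nonadmpair} yields a blocker $X$ with $a,b\in X$, $|X|\geq 6$, which is either a 7-set containing at most one of $c,d$, or an 8-set with $c,d\notin X$; the 6-set option is excluded because $ab\notin E$ precludes $X\subsetneq V$ being a $K_3$ and $|V|\geq 16$ rules out $X=V$, and $G[X]$ has minimum degree $3$ by Lemma~\ref{lem:7sets} or~\ref{lem:8set}.

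In the principal subcase $\{c,d\}\cap X=\emptyset$ and $\{a,b\}\cap W=\emptyset$, all four edges $ac,ad,bc,bd$ lie between $X\setminus W$ and $W\setminus X$, so $d(X,W)\geq 4$. But Lemma~\ref{lem:unionof77} (when $X$ is a 7-set) or Lemma~\ref{lem:unionof78} (when $X$ is an 8-set), together with the minimum-degree-$3$ hypotheses, bounds $d(X,W)\leq 3$ in every allowed intersection configuration, using Claim~\ref{claim:6set} and $|V|\geq 16$ to discard the large 6-set unions that would otherwise arise. This contradiction closes the principal subcase.

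The non-principal cases are handled by secondary blocker arguments. When $X$ is a 7-set containing exactly one of $c,d$, say $c$, only $ad,bd$ cross and $c$ joins $X\cap W$; Lemma~\ref{lem:unionof77} combined with Claim~\ref{claim:6set} and $|V|\geq 16$ then forces $|X\cap W|=2$ with an edge and $d(X,W)=2$, and a further reducibility analysis on an edge incident to $d$ (such as $ad$, reducible unless $|N_{a,d}|\geq 4$, in which case additional $K_4-e$ structure emerges and gives a second blocker by the same route) yields a further blocker to combine with $X$ or $W$ for the final contradiction. When $|N_{c,d}|\geq 3$ or $|N_{a,b}|\geq 4$, one first replaces $W$ or $X$ by a blocker for a non-adjacent reducible pair among the common neighbours---noting that any third common neighbour $t$ of $c,d$ must satisfy $at,bt\notin E$ (else $\{a,c,d,t\}$ or $\{b,c,d,t\}$ is a $K_4$, a proper 6-set of size~$4$ contradicting Claim~\ref{claim:6set}), and analogously that any four common neighbours of $a,b$ contain a non-adjacent pair---and then runs the principal argument. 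The main obstacle is the asymmetric blocker-intersection subcase, but the minimum-degree-$3$ conclusions of Lemmas~\ref{lem:7sets} and~\ref{lem:8set} together with Claim~\ref{claim:6set} restrict the possibilities enough to close it.
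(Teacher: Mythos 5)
There is a genuine gap at the heart of your ``principal subcase''. Your plan is to take a blocker $X\ni a,b$ with $c,d\notin X$ and a blocker $W\ni c,d$ with $a,b\notin W$, observe $d(X,W)\geq 4$ from the edges $ac,ad,bc,bd$, and contradict the bounds $d(X,W)\leq 3$ in Lemmas \ref{lem:unionof77} and \ref{lem:unionof78}. But every one of those union lemmas carries the hypothesis $X\cap W\neq\emptyset$, and nothing in your setup forces your two blockers to meet: one contains $\{a,b\}$ and excludes $\{c,d\}$, the other the reverse, and minimum degree $3$ only tells you that the remaining neighbours of $a,b$ lie in $X$ and those of $c,d$ lie in $W$ --- sets that can perfectly well be disjoint. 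When $X\cap W=\emptyset$ the count $i(X\cup W)=i(X)+i(W)+d(X,W)$ with $d(X,W)\geq 4$ only makes $X\cup W$ a $10$- or $11$-set, which is no contradiction at this stage of the proof (no maximal core is yet in play). A secondary version of the same problem: for an $8$-set blocker you cite Lemma \ref{lem:8set} for minimum degree $3$, but that lemma assumes minimum degree $3$ rather than proving it, so the ``Moreover'' clauses you rely on are not available for free in the $8$-set case. The paper avoids the disjointness trap entirely by anchoring both of its blockers at a common vertex: it works with the edge between the two degree-$3$ vertices of the $K_4-e$ (your $cd$); if that edge is reducible a single blocker already yields a $6$-set contradiction, and otherwise it produces two $7$-sets both containing one fixed vertex of the configuration, from which $|B\cap C|\geq 3$ and a non-complete intersection are forced before Lemma \ref{lem:unionof77} is invoked.

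Beyond that, the non-principal cases are asserted rather than proved. When the edge $cd$ lies in three or more triangles it is not reducible, and ``replace $W$ by a blocker for a non-adjacent reducible pair among the common neighbours'' does not explain which pair is reducible (a pair of common neighbours of $c,d$ could itself have four or five common neighbours), nor how the resulting blocker interacts with $a,b,c,d$ so that the crossing-edge count survives. The same applies to $|N_{a,b}|\geq 4$ and to the asymmetric case where the $7$-set $X$ contains exactly one of $c,d$: ``a further reducibility analysis \dots{} yields a further blocker \dots{} for the final contradiction'' is a promise, not an argument. In the paper this hard case is handled by a specific pivot to the edges from a degree-$3$ vertex of the $K_4-e$ to the non-adjacent pair and to the extra common neighbour, each shown to lie in exactly one triangle, and the two resulting $7$-sets are forced to share three named vertices. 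As written, your proposal establishes the easy reducible case but does not close the claim.
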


\begin{proof}[Proof of Claim \ref{claim:nok4-e}.]
Suppose there exists a subset $X=\{a,b,c,d\}$ of $V$ and $G[X]$ induces $K_4-bc$.
First suppose $ad$ is in exactly two triangles (that is, $ad$ is reducible). Then Claims \ref{claim:nonadm}, \ref{claim:6set} and Lemma \ref{lem:nonadmedge} imply there is a 7-set $A$ with $|A|\geq 6$ such that $a,d \in A$ and $b,c \notin A$. Now $a$ has two neighbours in $A-d$ and $d$ has two neighbours in $A-a$ by 5-regularity. Hence $A\setminus \{a,d\}$ is a 6-set on at least 4 vertices contradicting Claim \ref{claim:6set}.

Thus suppose $ad$ is in at least three triangles (that is, let $f\in N_{a,d}$) and let $g$ be the final neighbour of $a$. Note $bf,cf \notin E$ by Claim \ref{claim:6set}. Since $bf,bc \notin E$, $ab$ is in at most two triangles. As before we can show that $bg\notin E$ and hence $ab$ is in exactly one triangle.
Lemma \ref{lem:nonadmedge} and Claim \ref{claim:6set} imply that there is a 7-set $B$ with $|B|\geq 6$, $a,b\in B$ and $d\notin B$. Lemma \ref{lem:7sets} implies that $a$ and $b$ have degree at least 3 in $G[B]$. Without loss of generality this implies that $c\in B$. Hence $f\notin B$ (otherwise $d$ has four neighbours in $B$ and $B\cup d$ is a 6-set contradicting Claim \ref{claim:6set}). Hence $g\in B$. 

By symmetry with the argument above we can deduce that $fg\notin E$ and there exists a 7-set $C$ with $a,f\in C$ and $d\notin C$. Again, as above, there is no loss in generality in assuming $b\in C$ so $c\notin C$ and hence $g\in C$. Thus $|B\cap C|\geq 3$ and since $gb\notin E$ we have $G[B\cap C]\not\cong K_t$ for any $t$. Then $B\cup C$ has at least 4 vertices and hence $B\cup C$ is not a 6-set. Now Lemma \ref{lem:unionof77} implies that $B\cup C$ is a 7-set. Since $d$ has four neighbours in $B\cup C$, this contradicts Claim \ref{claim:6set}.
\end{proof}

\begin{claim}\label{claim:no6satall}
$G$ contains no proper subset that is a 6-set. 
\end{claim}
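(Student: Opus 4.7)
The plan is to argue by contradiction: suppose $G$ contains a proper 6-set. By Claim \ref{claim:6set} this set induces $K_3$, so it is a triangle $\{a,b,c\}$. Since $G$ contains no $K_4-e$ by Claim \ref{claim:nok4-e}, no edge of $G$ lies in two triangles, so each of $ab,bc,ca$ is a reducible edge contained in the single triangle $abc$, and by Claim \ref{claim:nonadm} each is non-admissible. Applying Lemma \ref{lem:nonadmedge}(1) to each edge, option (a) is ruled out immediately: a 6-set on $\geq 6$ vertices would have to equal $V$ by Claim \ref{claim:6set}, forcing $|V|=12$ by Lemma \ref{lem:small} and contradicting $|V|\geq 16$. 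Hence option (b) supplies three 7-sets $X_{ab},X_{bc},X_{ca}$, each on at least 6 vertices, with for example $a,b\in X_{ab}$ and $c\notin X_{ab}$. Lemma \ref{lem:7sets} then gives minimum degree $3$ in each induced subgraph.

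I would next analyse the pairwise intersections using Lemma \ref{lem:unionof77}. For $X_{ab},X_{bc}$ the triangle edge $ac$ joins $a\in X_{ab}\setminus X_{bc}$ to $c\in X_{bc}\setminus X_{ab}$, so $d(X_{ab},X_{bc})\geq 1$, and the ``moreover'' clause puts us in case 1 or case 2. In case 2, $|X_{ab}\cap X_{bc}|\geq 3$, and the identity $m+n=14-d\leq 13$ together with the fact that any 6-set of size at least $6$ must equal $V$ (again excluded by $|V|\geq 16$) forces $m\geq 7$ and hence $n=6$. Thus $X_{ab}\cap X_{bc}$ is a 6-set; since it omits $a$ it is proper, so by Claim \ref{claim:6set} it is a $K_3$ on three vertices, and Lemma \ref{lem:no_triangle_intersect} produces a $K_4-e$ in $G$, contradicting Claim \ref{claim:nok4-e}. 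The same reasoning applies to the other two pairs, so every pairwise intersection must fall into case 1: $X_{ab}\cap X_{bc}=\{b,v_b\}$, $X_{ab}\cap X_{ca}=\{a,v_a\}$, $X_{bc}\cap X_{ca}=\{c,v_c\}$, with each pair joined by an edge.

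The last step is an edge-count inside $X_{ab}$. Using case 1 for $X_{ab}\cap X_{ca}$, the vertex $v_a$ must be one of $a$'s three neighbours outside $\{b,c\}$; the minimum-degree-3 requirement in both $G[X_{ab}]$ and $G[X_{ca}]$ together with the equality $X_{ab}\cap X_{ca}=\{a,v_a\}$ then forces the remaining two such neighbours to split, one lying in $X_{ab}\setminus X_{ca}$ and one in $X_{ca}\setminus X_{ab}$. Hence $d_{G[X_{ab}]}(a)=3$, and symmetrically (using case 1 for $X_{ab}\cap X_{bc}$) also $d_{G[X_{ab}]}(b)=3$. The five edges incident to $a$ or $b$ inside $X_{ab}$ then yield $i(X_{ab}\setminus\{a,b\})=i(X_{ab})-5=3(|X_{ab}|-2)-6$, so $X_{ab}\setminus\{a,b\}$ is a proper 6-set on $|X_{ab}|-2\geq 4$ vertices, contradicting Claim \ref{claim:6set}.

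The main obstacle is the ``all three pairs in case 1'' configuration. The large-intersection branch of Lemma \ref{lem:unionof77} is disposed of uniformly via Lemma \ref{lem:no_triangle_intersect}, but eliminating the remaining rigid picture in which each pair of 7-sets meets in exactly two adjacent vertices requires exploiting the two case-1 constraints at $X_{ab}$ simultaneously to pin down the degrees of $a$ and $b$ in $G[X_{ab}]$, and thereby converting the 7-set $X_{ab}$ into a forbidden 6-set on too many vertices.
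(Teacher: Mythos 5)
Your proof is correct and takes essentially the same route as the paper's: both derive 7-set blockers from the non-admissible triangle edges via Lemma \ref{lem:nonadmedge}, use Lemma \ref{lem:no_triangle_intersect} together with the case analysis of Lemma \ref{lem:unionof77} to force two-vertex adjacent intersections, and then pin down degree-3 vertices to exhibit a proper 6-set on too many vertices, contradicting Claim \ref{claim:6set}. The only difference is bookkeeping: the paper works with two blockers augmented by the third triangle vertex, whereas you intersect all three blockers pairwise and use the third one to fix $d_{G[X_{ab}]}(a)=d_{G[X_{ab}]}(b)=3$.
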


\begin{proof}[Proof of Claim \ref{claim:no6satall}]
Suppose for a contradiction that $G$ contains a 6-set $T\subsetneq V$. By Claim \ref{claim:6set} we may suppose $G[T]=K_3$. By Claim \ref{claim:nok4-e} every edge of $T$ is reducible and by Claim \ref{claim:nonadm} every edge of $T$ is non-admissible. Let $T=\{a,b,c\}$. We may now apply Lemma \ref{lem:nonadmedge} to the reducible edges $ab$ and $ac$. By Claim \ref{claim:6set} this gives us a 7-set $X$ with $|X|\geq 6$, $a,b\in X$ and $c\notin X$ and a 7-set $Y$ with $|Y|\geq 6$, $a,c\in Y$ and $b\notin Y$.  
Claim \ref{claim:6set} and the edges $ac,bc$ imply that $c$ has either two or three neighbours in $X$. Hence $X\cup \{c\}$ is either a 7- or 8-set. Similarly $Y\cup \{b\}$ is either a 7- or 8-set. Let $Z=(X\cup \{c\})\cap (Y\cup \{b\})$ and note that $T\subseteq Z$. If $T=Z$ then Lemma \ref{lem:no_triangle_intersect} implies there is a copy of $K_4-e$ in $G$. This contradicts Claim \ref{claim:nok4-e}. So $|Z|\geq 4$ and hence $|X\cap Y|\geq 2$ and since $bc\in E$ we have $d(X,Y)\geq 1$. 
Thus Lemma \ref{lem:unionof77} implies $|X\cap Y|=2$. 
Let $X\cap Y=\{a,t\}$. Then, since $X$ is a 7-set, $X\setminus \{a,t\}$ is a 6-set contradicting Claim \ref{claim:6set}.
\end{proof}

Since $|V|\geq 16$, Lemma \ref{lem:reducible} and Claim \ref{claim:no6satall} imply that there exists a reducible vertex-pair.
Let $\mathcal{X}$ be the collection of proper cores $X \subsetneq V$ such that $X$ is a $j$-set with $j\in \{7,8\}$. 

\begin{claim}\label{claim:nonempty}
$\mathcal{X}\neq \emptyset$. 
\end{claim}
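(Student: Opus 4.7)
The plan is to extract a proper $7$- or $8$-set from the blocker of a non-admissible reducible vertex-pair and then promote it to a core via a maximality argument. I start by taking the reducible vertex-pair $\{u,v\}$ guaranteed by the sentence preceding the claim. By Claim \ref{claim:nonadm} it is non-admissible, so Lemma \ref{lem:nonadmpair} yields a blocker $X_0 \ni u,v$ with $|X_0| \geq 6$ that is a $j$-set for some $j \in \{6,7,8\}$. The case $j = 6$ is excluded: a proper $6$-set contradicts Claim \ref{claim:no6satall}, and $X_0 = V$ being a $6$-set forces $5|V|/2 = 3|V|-6$, i.e., $|V| = 12$, contrary to $|V| \geq 16$. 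The subcase conditions of Lemma \ref{lem:nonadmpair} on the common neighbours of $\{u,v\}$, together with $|V| \geq 16$ (which prevents $V$ from being a $7$-set), also give $X_0 \subsetneq V$. Hence the collection of proper $7$- or $8$-sets of $V$ is non-empty.

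Now I would let $X^*$ be a proper $7$- or $8$-set of maximum cardinality and show that either $X^* \in \mathcal{X}$ or $X^* - w \in \mathcal{X}$ for some $w \in X^*$. First, $X^*$ is closed: if some $v \in V \setminus X^*$ had $k \geq 3$ neighbours in $X^*$, then $i(X^* \cup \{v\}) = i(X^*) + k$ makes $X^* \cup \{v\}$ a $(10-k)$-set when $X^*$ is a $7$-set and an $(11-k)$-set when $X^*$ is an $8$-set. The value $k = 5$ either violates $3$-sparsity (the $7$-set case) or produces a $6$-set, which by Claim \ref{claim:no6satall} must equal $V$ and force $|V| = 12$, both impossible. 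For $k \in \{3,4\}$ the set $X^* \cup \{v\}$ is a $7$- or $8$-set strictly larger than $X^*$, so by maximality $X^* \cup \{v\} = V$; but then $X^* = V \setminus \{v\}$ contains all five neighbours of $v$, forcing $k = 5$, a contradiction. Thus every exterior vertex of $X^*$ has at most two neighbours in $X^*$.

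Second, I check the minimum-degree condition. If $X^*$ is a $7$-set, any $u \in X^*$ with $d_{G[X^*]}(u) \leq 2$ would make $X^* - u$ a proper $6$-set of size at least $5$, contradicting Claim \ref{claim:no6satall}; hence $X^* \in \mathcal{X}$. If $X^*$ is an $8$-set the analogous argument yields minimum degree at least $2$; if this degree is already $\geq 3$ then $X^* \in \mathcal{X}$, and otherwise I pick $w \in X^*$ with $d_{G[X^*]}(w) = 2$ and observe that $X^* - w$ is a $7$-set of size $\geq 5$, still closed (the new exterior vertex $w$ retains its two neighbours in $X^* - w$, and all other exterior vertices only lose neighbours), with minimum degree $\geq 3$ by the $7$-set argument, so $X^* - w \in \mathcal{X}$.

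The delicate step I anticipate is the closure verification, in particular ruling out the degenerate case $X^* \cup \{v\} = V$ in the maximality step; this uses the combination of $5$-regularity, $3$-sparsity, and $|V| \geq 16$, together with Claim \ref{claim:no6satall}. Once closure is established, the minimum-degree step is routine and the claim follows.
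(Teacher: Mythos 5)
Your proof is correct and follows essentially the same strategy as the paper: extract a $7$- or $8$-set blocker from a non-admissible reducible vertex-pair, rule out the blocker being $V$ or a $6$-set, and then upgrade it to a proper core. The paper implements the upgrade in the opposite order — first deleting low-degree vertices and then running the core-closure process, ruling out closure to all of $V$ by an edge count on the sequential additions — whereas your maximum-cardinality choice packages the closure step more cleanly; the two are interchangeable. One small slip to fix: when $X^*$ is a $7$-set and the exterior vertex $v$ has $k=4$ neighbours in $X^*$, the set $X^*\cup\{v\}$ is a $6$-set, not a $7$- or $8$-set as you assert; the contradiction still follows at once from Claim \ref{claim:no6satall} (and, if $X^*\cup\{v\}=V$, from Lemma \ref{lem:small} together with your ``all five neighbours'' observation), so the argument survives after this case is rerouted.
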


\begin{proof}[Proof of Claim \ref{claim:nonempty}.]
Claim \ref{claim:nonadm} and Lemma \ref{lem:nonadmpair} imply that there exists a $k$-set $X$ such that $|X|\geq 6$ and $k\in \{6,7,8\}$. Suppose $X=V$. Then Lemma \ref{lem:nonadmpair} implies that $k\neq 8$ and then Lemma \ref{lem:small} implies that $|V| \in \{12,14\}$, a contradiction.
Hence it follows that $X\subsetneq V$. Now Claim \ref{claim:no6satall} implies that $k\neq 6$.

If $X$ is a proper core then $X \in \mathcal{X}$, so suppose there exists $v\in X$ such that $d_{G[X]}(v)\leq 2$. 
If $G[X]$ contains a vertex $v$ of degree at most 1 then $k=8$ and $X-v$ is a 6-set contradicting Claim \ref{claim:no6satall}. 
If $G[X]$ contains a vertex $v$ of degree 2 then Lemma \ref{lem:7sets} implies that
$k=8$ and $X-v$ is a 7-set. Suppose $G[X-v]$ contains a vertex $x$ of degree less than 3. The 3-sparsity of $G$ then implies that $x$ has degree 2 and $X\setminus \{v,x\}$ is a 6-set, contradicting Claim \ref{claim:no6satall}.
Thus, if $X$ is not a proper core, we may add the vertices of $V\setminus X$ to $X$ in sequence such that every addition adds at least three edges. 

Suppose $i(V)\geq 3|V|-7$. Then $6|V|-14\leq 2i(V)=5|V|$ which implies that $|V|\leq 14$. This contradicts the assumption that $|V|\geq 16$, and so $i(V)\leq 3|V|-8$. 
As $k\in \{7,8\}$, this further implies that $k=8$. Since $X$ and $V$ are both 8-sets it follows that each addition adds exactly three edges. Hence the final vertex has degree 3, contradicting 5-regularity.
\end{proof}

Let $Y$ be a maximal member of $\mathcal{X}$. 
Lemma \ref{lem:outside2} implies that there exists a reducible vertex-pair $\{u,w\}$ with $u\in Y$ and $w\notin Y$. Claim \ref{claim:nonadm} and Lemma \ref{lem:nonadmpair} imply that there exists a $j$-set $Z$ containing $u,w$ with $j\in \{6,7,8\}$ and $|Z|\geq 6$. 
By Claim \ref{claim:no6satall}, $j\neq 6$. Let $\overline{Y\cup Z}$ be the closure of $Y\cup Z$.
Later we will use the fact that the closure of a subset $T'$ of $T$ is contained in the closure of $T$.

We make explicit here that the claims that follow apply to any $Z$ with the properties in the above paragraph. In particular we may assume no vertex in $V-(Y\cup Z)$ has at least three neighbours in $Z$.

\begin{claim}\label{claim:wneqv}
$\overline{Y\cup Z}\neq V$.
\end{claim}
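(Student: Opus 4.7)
The plan is to assume, for a contradiction, that $\overline{Y\cup Z}=V$, and to split into two principal cases according to whether $Y\cup Z=V$ or $Y\cup Z\subsetneq V$.

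In the first case, $Y\cup Z=V$, the natural move is to invoke Propositions \ref{prop:smallenough} and \ref{prop:smallenough2}, which between them conclude that $G$ is either $\mathcal{R}_3$-independent or satisfies $|V|\le 14$, both contradicting our setup. Proposition \ref{prop:smallenough} applies when $G[Z]$ has minimum degree $3$, and Proposition \ref{prop:smallenough2} covers the case in which $G[Z]$ has a vertex of degree $2$ with $Y\cap Z$ being either a single vertex or an independent pair. The sub-cases uncovered are those where $G[Z]$ has a degree-$2$ vertex $v$ and $Y\cap Z$ either contains an edge or has at least three vertices. Here one observes that the three neighbours of $v$ outside $Z$ must lie in $Y$ (since $Y\cup Z=V$), forcing $v\in Y$ by the core property of $Y$; combining this with the appropriate intersection-union bound from Lemmas \ref{lem:unionof77}--\ref{lem:unionof88}, with Lemma \ref{lem:small}, and with the triangle-free lower bound of Lemma \ref{lem:trifree}, one reaches the same numerical contradiction with $|V|\ge 16$.

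In the second case, $Y\cup Z\subsetneq V=\overline{Y\cup Z}$, fix $v\in V\setminus(Y\cup Z)$ to be the first vertex added in the closure process. Then $v$ has at least three neighbours in $Y\cup Z$, at most two in $Y$ (core property), and at most two in $Z$ (our standing hypothesis on $Z$); hence $v$ has neighbours in both $Y\setminus Z$ and $Z\setminus Y$. When $|Y\cap Z|=1$ we are in the setting of Lemma \ref{lem:union_singleton}: parts (c) and (d) furnish an admissible vertex-pair for small $d(Y,Z)$, contradicting Claim \ref{claim:nonadm}, while for the larger admissible values of $d(Y,Z)$ part (b) fixes the $k$-value of $Y\cup Z$ and a closure-monotonicity argument (each closure step weakly decreases $k$), combined with Lemmas \ref{lem:small}, \ref{lem:trifree}, and \ref{lem:small_compute}, contradicts $|V|\ge 16$. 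When $|Y\cap Z|\ge 2$ the same strategy applies, replacing Lemma \ref{lem:union_singleton}(b) with the relevant instance of Lemmas \ref{lem:unionof77}--\ref{lem:unionof88}; here Lemma \ref{lem:no_triangle_intersect} together with Claim \ref{claim:nok4-e} rules out the triangular intersection pattern.

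The principal obstacle is the sub-cases of the first case that lie outside the direct reach of Propositions \ref{prop:smallenough} and \ref{prop:smallenough2}: these require a careful manual analysis that leverages the intersection-union lemmas, $5$-regularity, Claim \ref{claim:no6satall}, and Lemma \ref{lem:trifree} to force the small-vertex contradiction with $|V|\ge 16$. A secondary difficulty is confirming that in the second case the admissibility conclusions of Lemma \ref{lem:union_singleton}(c),(d) cover enough of the possible values of $d(Y,Z)$ that the remaining size arguments close out the proof.
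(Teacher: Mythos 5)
Your overall architecture matches the paper's: split on whether $Y\cup Z=V$ or $Y\cup Z\subsetneq\overline{Y\cup Z}=V$, dispatch the first case with Propositions \ref{prop:smallenough} and \ref{prop:smallenough2} plus an ad hoc argument for the leftover sub-cases (where you correctly observe that the degree-$2$ vertex of $G[Z]$ is forced into $Y\cap Z$), and handle the second case with the union lemmas, a closure argument, and Lemma \ref{lem:union_singleton}(c),(d). The first case and the $|Y\cap Z|=1$ branch of the second case are essentially the paper's argument.

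However, there is a genuine gap in the hardest sub-case, namely $Y\cup Z\subsetneq V$ with $G[Z]$ of minimum degree $3$. Two problems. First, weak closure monotonicity (``each closure step weakly decreases $k$'') only yields that $V$ is a $j$-set with $j\leq k\leq 11$, hence $|V|\leq 22$ by Lemma \ref{lem:small}, which is no contradiction with $|V|\geq 16$. The paper exploits $5$-regularity on the \emph{last two} vertices added in the closure (they contribute $5$ and at least $4$ edges respectively) to get $j\leq k-3\leq 8$ and hence $|V|\leq 16$. Second, even after this sharpening, the extremal configuration survives every size bound you cite: $|V|=16$ with $Y$ and $Z$ both $8$-sets on $8$ vertices, $Y\cap Z=\{a,b\}$ with $ab\in E$, $d(Y,Z)=0$, and two closure vertices is numerically consistent with $5$-regularity and Lemma \ref{lem:trifree}; moreover Lemma \ref{lem:no_triangle_intersect} is inapplicable here since the intersection is a $K_2$ (indeed $G$ is triangle-free once Claim \ref{claim:no6satall} holds, so there is no ``triangular intersection pattern'' to rule out). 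The paper kills this configuration non-numerically: since $Z$ is an $8$-set blocker, Lemma \ref{lem:nonadmpair} forces $\{u,w\}$ to have a common neighbour outside $Z$; $d(Y,Z)=0$ forces that neighbour outside $Y$ as well, hence outside $Y\cup Z$; but $u\in Y\cap Z$ already has five neighbours inside $Y\cup Z$ (at least three in each of $Y$ and $Z$, with the edge $ab$ shared), a contradiction. This blocker-based step is the missing idea; without it your second case does not close.
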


\begin{proof}[Proof of Claim \ref{claim:wneqv}.]
Suppose for a contradiction that $\overline{Y\cup Z}=V$. Then either $Y\cup Z=\overline{Y\cup Z}$ or $Y\cup Z \subsetneq \overline{Y\cup Z}$. Suppose first that $Y\cup Z=\overline{Y\cup Z}$. 
Thus Propositions \ref{prop:smallenough} and \ref{prop:smallenough2} imply that $|V|\leq 14$, 
contradicting the assumption that $|V|>14$, unless $G[Z]$ has a unique vertex $t$ of degree 2, this vertex is contained in the intersection $Y\cap Z$ and either $Y\cap Z=\{x,y\}$ with $xy\in E$ or $|Y\cap Z|\geq 3.$
However it is easy to see that $Y\cap Z\neq \{x,y\}$ with $xy\in E$, otherwise $Z$ contains a 6-set on at least four vertices. Further if $|Y\cap Z|\geq 3$ and $t$ is contained in $Y\cap Z$ then we may apply a similar argument to Proposition \ref{prop:smallenough}.

Thus we may suppose that $Y\cup Z \subsetneq \overline{Y\cup Z}=V$. Suppose first that $G[Z]$ has minimum degree 3.
Lemmas \ref{lem:unionof77}, \ref{lem:unionof88} and \ref{lem:unionof78} imply that $Y\cup Z$ is a $k$-set with $k\leq 11$. Note that $\overline{Y\cup Z}$ is obtained from $Y\cup Z$ by adding, in order, vertices $v_1, \dots, v_r \in \overline{Y\cup Z}-(Y\cup Z)$ in such a way that the vertex $v_s$ has at least three neighbours in $G[Y\cup Z \cup \bigcup_{i=1}^{s-1} v_i]$. 
Hence $\overline{Y\cup Z}$ is a $j$-set with $j\leq k$. By the maximality of $Y$ and of $Z$, $r\geq 2$.
Now, since $G[\overline{Y\cup Z}]=G$ is 5-regular, $v_r$ has exactly five neighbours in $G[Y\cup Z \cup \bigcup_{i=1}^{r-1} v_i]$ and $v_{r-1}$ has exactly 4 neighbours in $G[Y\cup Z \cup \bigcup_{i=1}^{r-2} v_i]$. This implies that $j\leq k-3\leq 8$. Since $|V|\geq 14$, Lemma \ref{lem:small} implies that $|V|= 16$, $j=8$ and $k=11$. Hence, by reconsidering Lemmas \ref{lem:unionof77}, \ref{lem:unionof88} and \ref{lem:unionof78}, we have that $Y,Z$ are both 8-sets, $Y\cap Z=\{a,b\}$ with $ab\in E$ and $d(Y,Z)=0$. However this is a contradiction because $Z$ is a blocker and hence Lemma \ref{lem:nonadmpair} implies that $u=a$ and $\{u,w\}$ is a vertex-pair with a common neighbour not in $Z$. This neighbour is not in $Y$ since $d(Y,Z)=0$ and hence is in $V-(Y\cup Z)$ contradicting the fact that $a$ has degree 5. 

Now suppose that $G[Z]$ does not have minimum degree 3 and hence has a unique vertex $t$ of degree 2. 
The argument at the beginning of the proof of Proposition \ref{prop:smallenough2} tells us that $Y\cap Z=\{u\}$ and $u$ has degree 2, so $t=u$. It further tells us that $Y\cup Z$ is a $k$-set with $k\leq 13$. 
Claim \ref{claim:no6satall} and Lemma \ref{lem:7sets} imply that $Z$ is an 8-set so Lemma \ref{lem:nonadmpair} implies that $\{u,w\}$ has a common neighbour not in $Z$. Since $u$ has three neighbours in $Y$ this tells us that $d(Y,Z)\geq 1$ and hence in fact $k\leq 12$.
If $d(Y,Z)>2$, then we contradict the assumption that $|V|>14$. In fact $d(Y,Z)=2$ only in the case when $Y$ is an 8-set.
Hence $Y$ is a 7-set and $d(Y,Z)=1$ or $Y$ is a 8-set and $d(Y,Z)\in \{1,2\}$. Then Lemma \ref{lem:union_singleton} (c) and (d) give admissible reductions contradicting Claim \ref{claim:nonadm}.
\end{proof}

\begin{claim}\label{claim:Z8}
$Z$ is an 8-set and the vertex-pair $\{u,w\}$ has  all common neighbours not in $Z$.
\end{claim}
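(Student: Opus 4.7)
The plan is to prove the equivalent statement that $Z$ is an 8-set. Once this is established, the assertion that all common neighbours of $\{u,w\}$ lie outside $Z$ follows immediately from parts (1)(c), (2)(c), and (3)(c) of Lemma~\ref{lem:nonadmpair}, each of which requires every common neighbour of $\{u,w\}$ to lie outside any 8-set blocker.

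Since Claim~\ref{claim:no6satall} rules out 6-sets, $Z$ must be a 7-set or an 8-set. I will suppose for a contradiction that $Z$ is a 7-set. As $|Z|\geq 6$ and $G$ has no 6-set, Lemma~\ref{lem:7sets} gives that $G[Z]$ has minimum degree 3. I then split on $|Y\cap Z|$.

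If $|Y\cap Z|=1$, so $Y\cap Z=\{u\}$, then $u$'s at least three neighbours in $Z$ all lie in $Z\setminus Y\subseteq V\setminus Y$; but $u\in Y$ and $Y$ is a proper core, so $u$ has at most two neighbours outside $Y$, a contradiction.

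If $|Y\cap Z|\geq 3$, I apply Lemma~\ref{lem:unionof77} or~\ref{lem:unionof78} (according as $Y$ is a 7- or 8-set), invoking the ``moreover'' clauses via the minimum degree 3 conditions on both $Y$ and $Z$. The no-6-set hypothesis forces $m,n\geq 7$ in the decomposition $m+n=14-d(Y,Z)$ or $m+n=15-d(Y,Z)$ arising from these lemmas, so $Y\cup Z$ is a 7- or 8-set. The closure $\overline{Y\cup Z}$ is a proper core with the same or smaller $k$-value, and by Claim~\ref{claim:wneqv} it is not $V$; the no-6-set hypothesis keeps its $k$-value at least 7. Hence $\overline{Y\cup Z}\in\mathcal{X}$ and strictly contains $Y$ (since $w\in Z\setminus Y$), contradicting the maximality of $Y$.

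If $|Y\cap Z|=2$, the same ``moreover'' clauses force the intersection to induce an edge $ux$. Combining minimum degree 3 on both $Y$ and $Z$ with 5-regularity, I obtain that $u$ (and symmetrically $x$) has exactly one neighbour in $Y\cap Z$, exactly two in $Y\setminus Z$, exactly two in $Z\setminus Y$, and none outside $Y\cup Z$. Hence $d(\{u,x\},Z\setminus Y)=4$, giving
\[
i(Z\setminus Y)=i(Z)-i(Y\cap Z)-d(\{u,x\},Z\setminus Y)=(3|Z|-7)-1-4=3|Z\setminus Y|-6.
\]
Since $|Z\setminus Y|=|Z|-2\geq 4$, the set $Z\setminus Y$ is a proper 6-set in $G$, contradicting Claim~\ref{claim:no6satall}.

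The main obstacle is the $|Y\cap Z|=2$ case, which does not reduce straightforwardly via the closure/maximality argument used for larger intersections; the key observation that resolves it is that the intersection being an edge, combined with the strong minimum degree conditions on both $Y$ and $Z$, rigidly fixes the edge count from $Y\cap Z$ into $Z\setminus Y$, allowing a direct identification of a forbidden 6-set inside $Z\setminus Y$ regardless of the value of $d(Y,Z)$.
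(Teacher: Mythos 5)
Your proof is correct and follows essentially the same route as the paper: for a hypothetical 7-set $Z$ you use the no-6-set hypothesis to force minimum degree $3$, dispose of $|Y\cap Z|\geq 3$ by the closure/maximality-of-$Y$ argument, and dispose of $Y\cap Z=\{u,x\}$ with $ux\in E$ by exhibiting the forbidden 6-set $Z\setminus\{u,x\}$, with the second assertion then read off from part (c) of Lemma~\ref{lem:nonadmpair}. The only difference is organisational (the paper first splits on whether $Z$ is a proper core and then on whether $Y$ is a 7- or 8-set, whereas you split directly on $|Y\cap Z|$), and both contradictions you reach are exactly those used in the paper.
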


\begin{proof}[Proof of Claim \ref{claim:Z8}.]
Suppose $Z$ is not a proper core. Then $Z$ either contains a 6-set contradicting Claim \ref{claim:no6satall} or $Z$ is an 8-set. So we may suppose $Z$ is a proper core. Further suppose that $Z$ is a 7-set.
Suppose also that $Y$ is a 7-set. By Lemma \ref{lem:unionof77}, if $|Y\cap Z|\geq 3$ then the fact that $i(Y\cap Z)\leq 3|Y\cap Z|-6$ implies that $i(Y\cup Z)\geq 3|Y\cup Z|-8$ and hence $i(\overline{Y\cup Z})\geq 3|\overline{Y\cup Z}|-8$. By Claim \ref{claim:wneqv} this contradicts the maximality of $Y$, so $|Y\cap Z|=2$. 

Hence, by Lemma \ref{lem:unionof77}, we may suppose that $Y\cap Z=\{a,b\}$. 
Then $Z\setminus \{a,b\}$ is a 6-set, contradicting Claim \ref{claim:no6satall}. 
Suppose then that $Y$ is an 8-set. If $|Y\cap Z|=2$, then we obtain the same contradiction as previously. Thus we have $|Y\cap Z|\geq 3$ and $Y\cap Z$ is a 6-set and $d(Y,Z)=0$, contradicting Claim \ref{claim:no6satall}. 

The second conclusion follows immediately from Lemma \ref{lem:nonadmpair}.
\end{proof}

\begin{claim}\label{claim:zfully7}
If $Z$ is a proper core then $Y$ is an 8-set and $|Y\cap Z|=2$.
\end{claim}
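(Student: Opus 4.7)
The plan is to combine the maximality of $Y$ in $\mathcal{X}$ with the structural union/intersection lemmas (Lemmas \ref{lem:unionof78} and \ref{lem:unionof88}) to force every possibility other than the stated conclusion to produce either a proper $6$-set (contradicting Claim \ref{claim:no6satall}) or a violation of maximality.

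First I would settle the shape of $Y\cap Z$. If $|Y\cap Z|=1$, or $Y\cap Z=\{x,y\}$ with $xy\notin E$, then a shared vertex would require at least three neighbors in each of the two proper cores $G[Y]$ and $G[Z]$, and the common neighbors would be forced to lie outside the intersection; this would give degree at least $6$, contradicting $5$-regularity. Hence $|Y\cap Z|\geq 2$, and when $|Y\cap Z|=2$ the two shared vertices are adjacent. Next I would exploit the maximality of $Y$: since $\overline{Y\cup Z}\neq V$ by Claim \ref{claim:wneqv}, the closure $\overline{Y\cup Z}$ is a proper core, and the closing operation only weakly decreases the $k$-value. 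If $Y\cup Z$ were a $k$-set with $k\leq 8$, then $\overline{Y\cup Z}$ would belong to $\mathcal{X}$ and strictly contain $Y$ (since $w\in Z\setminus Y\subseteq \overline{Y\cup Z}$), contradicting the maximality of $Y$. Hence $Y\cup Z$ is a $k$-set with $k\geq 9$. I would also record that Claim \ref{claim:no6satall} forces $G$ to be triangle-free (any triangle would be a proper $6$-set), so Lemma \ref{lem:trifree} gives $|Y|\geq 9$ when $Y$ is a $7$-set and $|Y|,|Z|\geq 8$ when they are $8$-sets.

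The case analysis splits on whether $Y$ is a $7$- or $8$-set, and on whether $|Y\cap Z|=2$ or $\geq 3$. If $Y$ is a $7$-set with $Y\cap Z=\{a,b\}$, $ab\in E$, then Lemma \ref{lem:unionof78} gives $k=10-d(Y,Z)\geq 9$, so $d(Y,Z)\leq 1$; a degree computation using that $a$ and $b$ lie in two proper cores pins $d_{G[Y]}(a)=d_{G[Y]}(b)=3$, which makes $Y\setminus\{a,b\}$ a proper $6$-set, the desired contradiction. If $Y$ is a $7$-set with $|Y\cap Z|\geq 3$, then the relation $m+n=15-d(Y,Z)$ with $m\geq 9$ and $n\geq 7$ (from no $6$-sets) is impossible. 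If $Y$ is an $8$-set with $|Y\cap Z|\geq 3$, Lemma \ref{lem:unionof88} forces $m=9$, $n=7$, $d(Y,Z)=0$; the resulting $7$-set $Y\cap Z$ is triangle-free, hence has size either $3$ or at least $9$, and an edge count on $Y\cap Z$ using the identity $d(Y\cap Z,V\setminus (Y\cap Z))=14-|Y\cap Z|$ together with $d(Y,Z)=0$ and the no-$6$-set constraint forces $d(Y\cap Z,Y\setminus Z)\geq 6$ and $d(Y\cap Z,Z\setminus Y)\geq 6$ whenever the two complements have size at least $3$, yielding the contradictory inequality $12\leq 14-|Y\cap Z|$. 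The only surviving configuration is $Y$ an $8$-set with $|Y\cap Z|=2$, the desired conclusion.

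The main obstacle I anticipate is the final case (an $8$-set $Y$ with $|Y\cap Z|\geq 9$), where the complements $Y\setminus Z$ or $Z\setminus Y$ may be small and the main edge-counting inequality is not directly available; here I would argue that a complement of size $1$ forces its lone vertex to have degree $2$ in $G[Y]$, contradicting the proper-core hypothesis, while a complement of size $2$ forces a specific neighbor distribution that collapses to the absurd inequality $d(Y\cap Z,V\setminus(Y\cup Z))<0$, and the case when both complements have size at least $3$ is covered by the main edge-counting argument.
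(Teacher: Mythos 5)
Your proposal is correct and follows essentially the same route as the paper: both first force $Y$ to be an $8$-set via the no-$6$-set constraint (Lemma \ref{lem:8set} when $|Y\cap Z|=2$, Lemma \ref{lem:unionof78} plus maximality of $Y$ when $|Y\cap Z|\geq 3$), then use Claim \ref{claim:wneqv} and maximality to force $Y\cup Z$ to be a $k$-set with $k\geq 9$, and finish with the identical edge count $d(N,Y')+d(N,Z')\geq 12>14-|N|$ on the $7$-set $N=Y\cap Z$. Your explicit treatment of the degenerate cases where $Y\setminus Z$ or $Z\setminus Y$ has fewer than three vertices is a welcome extra level of care that the paper's final sentence glosses over.
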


\begin{proof}[Proof of Claim \ref{claim:zfully7}.]
Suppose $Z$ is a proper core and $Y$ is a 7-set. If $|Y\cap Z|\geq 3$, then $Y\cap Z$ is a 6-set, contradicting Claim \ref{claim:no6satall}.
If $Y\cap Z=\{a,b\}, ab\in E$, then $Y\setminus \{a,b\}$ is a 6-set, again contradicting Claim \ref{claim:no6satall}. Hence $Y$ is an 8-set.

Suppose next that $|Y\cap Z|\geq 3$.
By Claim \ref{claim:wneqv}, $\overline{Y\cup Z}\neq V$.
If $i(\overline{Y\cup Z})\geq 3|\overline{Y\cup Z}|-8$, then $\overline{Y\cup Z}$ contradicts the maximality of $Y$.
Thus $i(\overline{Y\cup Z})\leq 3|\overline{Y\cup Z}|-9$. Claim \ref{claim:Z8} implies that $Z$ is an 8-set.
Lemma \ref{lem:unionof88} and Claim \ref{claim:no6satall} imply that $Y\cap Z$ is a 7-set. 

Thus suppose $Y\cap Z$ is a 7-set and hence $Y\cup Z$ is a 9-set and $d(Y,Z)=0$.
Let $Y'=Y\setminus (Y\cap Z)$, $Z'=Z\setminus (Y\cap Z)$ and $N=Y\cap Z$.
Suppose, for a contradiction, that $i(Y')\leq 3|Y'|-7$ and $i(Z')\leq 3|Z'|-7$. Hence 
\begin{equation}\label{eq:1}
 d(N, Y')\geq 3|Y|-8 - (3|Y'|-7)- (3|N|-7) = 6.   
\end{equation} 
Similarly $d(N, Z')\geq 6$. However, using Lemma \ref{lem:small} and since $|N|\geq 3$, we have 
\begin{equation}\label{eq:2}
 11\geq 14-|N|=d(N,V\setminus N) \geq d(N, Y') + d(N, Z') \geq 12,   
\end{equation}
a contradiction. Hence we have that at least one of $G[Y']$, $G[Z']$ is a 6-set, contradicting Claim \ref{claim:no6satall}.
\end{proof}

\begin{claim}\label{claim:Znotfullnew}
    $Z$ is not a proper core.
\end{claim}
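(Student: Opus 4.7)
The plan is to derive a contradiction by assuming $Z$ is a proper core. Combining this assumption with Claim \ref{claim:zfully7} gives that $Y$ is an 8-set and $|Y\cap Z|=2$, while Claim \ref{claim:Z8} already tells us $Z$ is an 8-set. Since both $G[Y]$ and $G[Z]$ then have minimum degree 3, the moreover clause of Lemma \ref{lem:unionof88} forces $Y\cap Z=\{a,b\}$ with $ab\in E$, and $Y\cup Z$ to be an $(11-d)$-set where $d:=d(Y,Z)\leq 5$.

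Next, I would pin down $d$ using the maximality of $Y$. The set $\overline{Y\cup Z}$ is a proper core (its minimum degree is $\geq 3$ because $G[Y]$ and $G[Z]$ have minimum degree $\geq 3$ and closure only adds vertices of degree $\geq 3$), and the closure operation can only decrease or preserve the $k$-value, so $\overline{Y\cup Z}$ is a $k'$-set with $6\leq k'\leq 11-d$. By Claim \ref{claim:wneqv} we have $\overline{Y\cup Z}\neq V$, so Claim \ref{claim:no6satall} rules out $k'=6$; since $w\in Z\setminus Y\subseteq \overline{Y\cup Z}$ we have $Y\subsetneq \overline{Y\cup Z}$, and thus the maximality of $Y$ in $\mathcal{X}$ rules out $k'\in\{7,8\}$. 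Therefore $k'\geq 9$, forcing $d\leq 2$.

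The third step is to analyse the neighbourhood of $u$ and the common neighbours of $\{u,w\}$. Since $u\in Y\cap Z$ and both $Y,Z$ are proper cores, $u$ has at least three neighbours in each of $Y$ and $Z$; combined with $|Y\cap Z|=2$ and 5-regularity this forces $u$ to have exactly two neighbours in $Y\setminus Z$, exactly two in $Z\setminus Y$, and $ub\in E$. In particular, every neighbour of $u$ lies in $Y\cup Z$. By Claim \ref{claim:Z8}, every common neighbour of $\{u,w\}$ lies outside $Z$, hence inside $Y\setminus Z$, and each such neighbour contributes an edge to $d(Y,Z)$. Since $\{u,w\}$ is reducible there is at least one common neighbour, so $d\in\{1,2\}$ and $|N_{u,w}|\in\{1,2\}$.

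The main obstacle is the final case analysis on the pair $(d,|N_{u,w}|)$. For each case I would produce an auxiliary reducible vertex-pair, typically of the form $\{t,w\}$ where $t$ is a neighbour of $u$ in $Y\setminus Z$ with $tw\notin E$, and then apply Claim \ref{claim:nonadm} together with Lemma \ref{lem:nonadmpair} to obtain a new blocker $W$. The interaction of $W$ with $Y$ and $Z$, controlled by Lemmas \ref{lem:unionof77}, \ref{lem:unionof78}, \ref{lem:unionof88}, \ref{lem:7sets} and \ref{lem:8set}, should yield one of three contradictions: a forbidden 6-set in $G$ (contradicting Claim \ref{claim:no6satall}); a proper core properly containing $Y$ which is a 7- or 8-set (contradicting maximality of $Y$); or, via Lemmas \ref{lem:small} and \ref{lem:trifree}, an upper bound on $|V|$ violating the standing hypothesis $|V|\geq 16$. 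The delicate accounting between $Y\setminus Z$, $Z\setminus Y$, $Y\cap Z$ and $V\setminus(Y\cup Z)$---and tracking exactly where the new blocker $W$ meets each part of this decomposition---is where the bulk of the technical work will lie.
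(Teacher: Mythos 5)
Your setup is correct and follows the same track as the paper: assuming $Z$ is a proper core, Claims \ref{claim:Z8} and \ref{claim:zfully7} give that $Y$ and $Z$ are 8-sets with $Y\cap Z=\{a,b\}$, $ab\in E$, the maximality of $Y$ applied to $\overline{Y\cup Z}$ forces $d(Y,Z)\le 2$, and the degree count at $u$ places all of $N(u)$ inside $Y\cup Z$ and all common neighbours of $\{u,w\}$ inside $Y\setminus Z$, so that $d(Y,Z)\in\{1,2\}$. All of this agrees with the paper's argument.

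The genuine gap is that you stop exactly where the proof becomes hard. The entire second half of the paper's proof is the case analysis you defer: it chooses the auxiliary pair $\{c,d\}$ with $c\in Y\setminus Z$ a common neighbour of $\{u,w\}$ and $d\in Z\setminus Y$ a neighbour of $u$ chosen non-adjacent to $c$ (possible because $d(Y,Z)\le 2$), shows the resulting blocker $A$ must be an 8-set with $u\notin A$ and $|(Y\setminus\{a,b\})\cap A|\ge 3$, rules out $A$ being a proper core by reapplying Claim \ref{claim:zfully7} to $A$, then locates the unique degree-2 vertex $\alpha$ of $G[A]$, eliminates $\alpha\in A\setminus(Y\cup Z)$, $\alpha\in Z\setminus Y$, $\alpha=d$ and $\alpha\in Y-c$ one by one, and finally, with $\alpha=c$, intersects the 7-set proper cores $Y\setminus\{a,b\}$ and $A-c$ via Lemma \ref{lem:unionof77} to produce an 8-set proper core strictly containing $Y$, contradicting maximality. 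Merely asserting that a blocker $W$ for some pair ``should yield one of three contradictions'' does not establish any of this. Moreover, your proposed auxiliary pair $\{t,w\}$ with $t\in N(u)\cap(Y\setminus Z)$ and $tw\notin E$ does not exist in the case $|N_{u,w}|=2$: by your own third step both neighbours of $u$ in $Y\setminus Z$ are then adjacent to $w$. The paper's choice of $\{c,d\}$, spanning $Y\setminus Z$ and $Z\setminus Y$ with $u$ as a guaranteed common neighbour, is engineered precisely so that the pair always exists, is reducible, and forces the blocker to avoid $u$ -- which is what makes the maximality contradiction go through. As written, the proposal is an accurate plan for the first third of the proof and a promissory note for the rest.
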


\begin{proof}[Proof of Claim \ref{claim:Znotfullnew}.]
Suppose $Z$ is a proper core. Claim \ref{claim:zfully7} implies $Y$ is an 8-set and $|Y\cap Z|=2$.
Lemma \ref{lem:unionof88} now gives that $Y\cap Z=\{a,b\}$ with $ab\in E$.
Then $Y\setminus \{a,b\}$ and $Z\setminus \{a,b\}$ are both 7-sets and Lemma \ref{lem:unionof88} implies that $Y\cup Z$ is an $(11-d(Y,Z))$-set with $d(Y,Z)\leq 2$. Note also that without loss of generality we may assume that $a=u$. Recall that $\{u,w\}$ has a common neighbour not in $Z$. By 5-regularity this common neighbour, $c$, is in $Y$.

Hence either $d(Y,Z)=1$ or $d(Y,Z)=2$. Then, without loss of generality, we may choose $d\in Z\setminus Y$ so that $ad\in E$ and $\{c,d\}$ is a vertex-pair with at least one common neighbour. By Claim \ref{claim:Z8}, $wd\notin E$.  
Since $c$ and $d$ have minimum degree 3 in $Y$ and $Z$ respectively, by Lemma \ref{lem:7sets}, $\{c,d\}$ is a reducible vertex-pair with exactly one or two common neighbours. 
Lemma \ref{lem:nonadmpair} implies that there exists a $j$-set $A$ with $|A|\geq 6$, $j\in \{6,7,8\}$ and $c,d\in A$. 

Claim \ref{claim:no6satall} implies that $j\neq 6$ and if $j=7$ then we can replace $Z$ with $A$ and apply Claim \ref{claim:Z8}. Hence $j=8$ and $a\notin A$. If $\{c,d\}$ have a second common neighbour $e$, then $e\notin A$. 

By Claim \ref{claim:no6satall}, $G[Y\setminus \{a,b\}]$ has minimum degree 3. Suppose $|(Y\setminus \{a,b\})\cap A|<3$, then, since $a \notin (Y\setminus \{a,b\})\cup A$, $c$ has at most two neighbours in $A$. Claim \ref{claim:no6satall} further implies, by considering $A-c$, that $c$ has exactly two neighbours $w,\pi$ in $A$. Now $\pi$ has exactly two neighbours in $A-c$ and hence we contradict Claim \ref{claim:no6satall}.
Hence $|(Y\setminus \{a,b\})\cap A|\geq 3$. If $A$ is a proper core then we may use $A$ instead of $Z$ and apply Claim \ref{claim:zfully7} to obtain a contradiction.

So $A$ is not a proper core (recall that Claim \ref{claim:wneqv} shows that the closure of $A\cup Y$ is not $V$). Then there exists a unique vertex $\alpha$ in $A$ with degree 2 in $G[A]$ and $A-\alpha$ is a 7-set. If $\alpha \in A \setminus (Y\cup Z)$ or $\alpha \in Z \setminus Y$ is distinct from $d$ then we may relabel $A$ by $A-\alpha$. If $\alpha =d$ and $A-d\not\subset Y$ then $G[Y\cup (A-d)]$ has minimum degree 3. Lemma \ref{lem:unionof78} and Claim \ref{claim:no6satall} imply that $Y\cup (A-d)$ is a $j$-set for some $j\in \{7,8\}$. 
This contradicts the maximality of $Y$ (again, since Claim \ref{claim:wneqv} shows that the closure of $A\cup Y$ is not $V$). 
If $\alpha =d$ and $A-d\subset Y$ then $d$ has two neighbours in $Y$ contradicting the assumption that $d(Y,Z)\leq 2$. 
If $\alpha \in Y-c$ then $A-\alpha$ is a 7-set blocking the reducible pair $\{c,d\}$. This contradicts Claim \ref{claim:Z8}. 

Hence $\alpha=c$. Now the proper cores $Y\setminus \{a,b\}$ and $A-c$ are both 7-sets intersecting in at least two vertices. If $(Y\setminus \{a,b\}) \cap (A-c)=\{p,q\}$ then, by 5-regularity, the set $Y\setminus \{a,b,p,q\}$ is a 6-set, contradicting Claim \ref{claim:no6satall}. 
Thus $|(Y\setminus \{a,b\}) \cap (A-c)|\geq 3$. Now Lemma \ref{lem:unionof77} implies that imply $(Y\setminus \{a,b\}) \cap (A-c)$ is a 7-set. 
Hence $(Y\setminus \{a,b\}) \cup (A-c)$ is a 7-set by Lemma \ref{lem:unionof77}. Hence $Y \cup (A-c)$ is an 8-set with $|Y \cup (A-c)|>|Y|$. This contradicts the maximality of $Y$ (since Claim \ref{claim:wneqv} shows that the closure of $A\cup Y$ is not $V$) and completes the proof of the claim. 
\end{proof}

By Claim \ref{claim:Znotfullnew}, $Z$ is not a proper core.
Claims \ref{claim:no6satall}, \ref{claim:wneqv} and \ref{claim:Z8} and Lemma \ref{lem:7sets} imply that $Z$ is an 8-set and there is a unique vertex $x\in Z$ that has $d_{G[Z]}(x)=2$. Then $x\in Z \setminus Y$ and $Z-x$ is a 7-set with minimum degree 3. 
Recall that $\{u,w\}$, with $u\in Y\cap Z$ and $w\in Z \setminus Y$, is a reducible vertex-pair with all common neighbours not in $Z$. Suppose $x\neq w$. Remove $x$ from $Z$ to obtain a 7-set $Z-x$ and then $Y\cup (Z-x)$ has minimum degree 3. Since $w\in Z \setminus Y$, $|Y\cup (Z-x)|>|Y|$. By Claim \ref{claim:wneqv}, $\overline{Y\cup Z}\neq V$ and hence this contradicts the maximality of $Y$. 
So we may suppose $x=w$.

\begin{claim}\label{claim:last}
$Z-w\subset Y$.
\end{claim}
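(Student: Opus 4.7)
The plan is to prove the contrapositive: assume $(Z-w)\setminus Y\neq\emptyset$ and construct a proper core that strictly contains $Y$, is strictly contained in $V$, and is a $k$-set with $k\in\{7,8\}$, contradicting the maximality of $Y$. The natural candidate is $\overline{Y\cup (Z-w)}$. This is contained in $\overline{Y\cup Z}\subsetneq V$ by Claim \ref{claim:wneqv} (using monotonicity of the closure), and contains $Y$ strictly by assumption. The set $Z-w$ is a $7$-set by the direct calculation $i(Z-w)=i(Z)-d_{G[Z]}(w)=(3|Z|-8)-2=3|Z-w|-7$, and by Lemma \ref{lem:7sets} (whose hypothesis is vacuous under Claim \ref{claim:no6satall}) $G[Z-w]$ has minimum degree $3$; combined with $Y$ being a proper core, $Y\cup (Z-w)$ has minimum degree $3$ in its induced subgraph, a property inherited by the closure. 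Taking the closure only decreases the deficiency, so it suffices to verify $Y\cup (Z-w)$ itself is a $k$-set with $k\leq 8$.

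Using $w\notin Y$ to get $Y\cap (Z-w)=Y\cap Z$, the counting identity is
\[
k \;=\; j + 7 \;-\; \bigl(3|Y\cap Z|-i(Y\cap Z)\bigr) \;-\; d(Y,Z-w),
\]
and I split on $|Y\cap Z|$, noting $u\in Y\cap Z$. The case $|Y\cap Z|=1$ is ruled out immediately: with $Y\cap Z=\{u\}$, $d_{G[Y\cap Z]}(u)=0$ while $d_{G[Y]}(u),d_{G[Z]}(u)\geq 3$ (the latter since $u\neq w$), contradicting $d_{G[Y]}(u)+d_{G[Z]}(u)\leq d_G(u)+d_{G[Y\cap Z]}(u)=5$. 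When $|Y\cap Z|\geq 3$, Claim \ref{claim:no6satall} gives $3|Y\cap Z|-i(Y\cap Z)\geq 7$, so $k\leq j\leq 8$; and $k\geq 7$ by Claim \ref{claim:no6satall} once more, directly contradicting the maximality of $Y$. The remaining case is $|Y\cap Z|=2$: writing $Y\cap Z=\{u,y\}$, the same $5$-regularity inequality at $u$ forces $uy\in E$, so $3|Y\cap Z|-i(Y\cap Z)=5$ and the identity becomes $k=j+2-d(Y,Z-w)$. Hence $d(Y,Z-w)\geq j-6$ already closes the case.

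To handle the residual small values of $d(Y,Z-w)$, the $5$-regularity inequality at both $u$ and $y$ pins down $d_{G[Y]}(u)=d_{G[Z]}(u)=d_{G[Y]}(y)=d_{G[Z]}(y)=3$, so $N(u)$ consists of $y$, two vertices of $Y\setminus Z$, and two of $Z\setminus Y$. By Claim \ref{claim:Z8} the common neighbours of $\{u,w\}$ avoid $Z$, so they lie among $u$'s two neighbours in $Y\setminus Z$; in particular $d_{Y\setminus Z}(w)\geq |N_{u,w}|\geq 1$. This places $w$ in the closure of $Y\cup(Z-w)$ with $d_{Y\cup(Z-w)}(w)=2+d_{Y\setminus Z}(w)\geq 3$, replacing $Y\cup(Z-w)$ by $Y\cup Z$ and dropping the deficiency by $d_{Y\setminus Z}(w)-1$. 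This settles every case except the sliver $j=7$, $d(Y,Z-w)=0$, $|N_{u,w}|=1$, $d_{Y\setminus Z}(w)=1$, where the closure step saves nothing. This sliver, in which all the elementary counting slack has been absorbed, is the main obstacle; I anticipate resolving it either by iterating the closure using the two remaining neighbours of $w$ in $V\setminus(Y\cup Z)$, whose attachments to $Y\cup Z$ are heavily constrained by $5$-regularity and the core property of $Y$, or by producing an admissible vertex-pair involving $y$ and a vertex of $(Z-w)\setminus Y$ (which exists by the standing hypothesis of the contrapositive), thereby contradicting Claim \ref{claim:nonadm}.
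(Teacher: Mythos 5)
Your overall strategy --- pass to $\overline{Y\cup(Z-w)}$ and contradict the maximality of $Y$ in $\mathcal{X}$ --- is exactly the paper's, and your treatment of the cases $|Y\cap Z|=1$ and $|Y\cap Z|\geq 3$ is correct. But the argument is not a complete proof: you explicitly leave open a ``sliver'' of the case $|Y\cap Z|=2$ and only ``anticipate resolving it''. Moreover, your accounting in that branch is optimistic even where you claim success: for $j=8$ the residual values $d(Y,Z-w)\in\{0,1\}$ require the deficiency of $Y\cup Z$ to drop by $2$ or $3$ upon adjoining $w$, which your bound $d_{Y\setminus Z}(w)\geq|N_{u,w}|\geq 1$ does not deliver, so more than the single sliver you name is actually unsettled.

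The gap closes immediately because the entire case $|Y\cap Z|=2$ is vacuous, which you failed to notice. As you yourself derive, $Y\cap(Z-w)=\{u,y\}$ with $uy\in E$ forces, by $5$-regularity, $d_{G[Y]}(u)=d_{G[Z-w]}(u)=d_{G[Y]}(y)=d_{G[Z-w]}(y)=3$. Hence
\[
i\bigl((Z-w)\setminus\{u,y\}\bigr)=i(Z-w)-(3+3-1)=3|Z-w|-12=3\bigl|(Z-w)\setminus\{u,y\}\bigr|-6,
\]
and since $|Z-w|\geq 5$ this exhibits a $6$-set on at least three vertices, contradicting Claim \ref{claim:no6satall}. (Equivalently, Lemma \ref{lem:8set} applied to the minimum-degree-$3$ sets $Y$ and $Z-w$ would force $Z-w$ to be an $8$-set, whereas it is a $7$-set.) Replacing your whole $|Y\cap Z|=2$ analysis by this one observation completes the proof and brings it in line with the paper's (terser) argument, which relies on exactly these facts to see that the closure $W'$ is a $7$- or $8$-set.
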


\begin{proof}[Proof of Claim \ref{claim:last}.]
Suppose not. Then $|Y\cup (Z-w)|>|Y|$.
By Claim \ref{claim:wneqv} $\overline{Y\cup Z}\neq V$. We may extend $Y\cup (Z-w)$ by adding vertices with at least three neighbours in $Y\cup (Z-w)$ recursively to form $W'$. Since $Z-w\subset Z$, we have $W'\subset \overline{Y\cup Z}$. Hence $W'$ contradicts the maximality of $Y$.
\end{proof}

Note next that all common neighbours of $\{u,w\}$ are not in $Y$. If such a vertex was in $Y$ then $w$ would have three neighbours in $Y\cup Z$ 
and hence $Y\cup Z$ has minimum degree 3 and could be extended, if necessary, to a proper core $W^*$ by Claim \ref{claim:wneqv}. By Claim \ref{claim:last} and the fact that $Y$ is a 7-set or 8-set, $i(W^*)\geq 3|W^*|-8$. Hence we contradict the maximality of $Y$.
Since $G$ is 5-regular and $Y$ is a proper core, $\{u,w\}$ have at most two common neighbours. 

Let $N(w)=\{p,q,r,s,t\}$ where $p,q,r\in V \setminus Y$ and $s,t\in Z-w$. Let $p$ be a common neighbour of $\{u,w\}$ (and if there are two common neighbours then say $q$ is the second). Note also that $su,tu\notin E$ since the common neighbours are outside $Y$.

\begin{claim}\label{claim:lastbut1}
$\{p,t\}$ is a non-admissible vertex-pair.
\end{claim}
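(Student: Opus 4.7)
The plan is to verify that $\{p,t\}$ is a reducible vertex-pair and then invoke Claim \ref{claim:nonadm}. Three things need checking: that $pt\notin E$, that $|N(p)\cap N(t)|\geq 1$, and that $|N(p)\cap N(t)|\leq 3$. The middle item is immediate, since $pw,tw\in E$ places $w$ in the common neighbourhood.

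To obtain $pt\notin E$, the key observation is that $p\in V\setminus(Y\cup Z)$: by the choice of $p$ as a neighbour of $w$ in $V\setminus Y$ we have $p\notin Y$, and by Claim \ref{claim:Z8} every common neighbour of $\{u,w\}$ lies outside $Z$, so $p\notin Z$. The paragraph immediately preceding Claim \ref{claim:wneqv} lets us assume that no vertex of $V\setminus(Y\cup Z)$ has three or more neighbours in $Z$; hence $N(p)\cap Z=\{u,w\}$ exactly. Since $t\in Z-w\subset Z$, it follows that $pt\notin E$.

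For the upper bound I will split $N(p)\cap N(t)$ by membership in $Z$. Inside $Z$ the only possible common neighbour is $w$, because $N(p)\cap Z=\{u,w\}$ and $tu\notin E$ (recorded just before the claim, since common neighbours of $\{u,w\}$ avoid $Y$ while $t\in Y$). Outside $Z$ I will apply Lemma \ref{lem:7sets} to the $7$-set $Z-w$ (its hypothesis on $6$-sets is vacuously satisfied by Claim \ref{claim:no6satall}, and $|Z-w|=|Z|-1\geq 5$), which forces $t$ to have at least three neighbours in $Z-w$; together with $tw\in E$ this gives $|N(t)\setminus Z|\leq 1$, so at most one common neighbour of $\{p,t\}$ lies outside $Z$. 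In total $|N(p)\cap N(t)|\leq 2\leq 3$, so $\{p,t\}$ is reducible and Claim \ref{claim:nonadm} gives non-admissibility. I do not foresee a serious obstacle here: the whole argument is a careful unpacking of structural constraints already accumulated on $Y$ and $Z$, combined with the minimum-degree $3$ property of $7$-sets; the only mildly subtle point is recognising that $p\in V\setminus(Y\cup Z)$, which puts the degree-into-$Z$ bound at our disposal.
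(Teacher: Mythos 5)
Your proof is correct and follows essentially the same route as the paper: establish that $\{p,t\}$ is a reducible vertex-pair by counting its common neighbours (using that $t$'s neighbourhood is almost entirely confined to $Z$ and that $p$ has only $u,w$ as neighbours there), then invoke Claim \ref{claim:nonadm}. The only cosmetic differences are that the paper gets $pt\notin E$ from Claim \ref{claim:no6satall} (a triangle would be a $6$-set) and rules out four common neighbours by contradiction with $Y$ being a proper core, whereas you argue directly from the standing assumption that $p\in V\setminus(Y\cup Z)$ has at most two neighbours in $Z$; both are valid.
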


\begin{proof}[Proof of Claim \ref{claim:lastbut1}.]
We show that $\{p,t\}$ is a reducible vertex-pair. The claim then follows from  Claim \ref{claim:nonadm}.
By Claim \ref{claim:no6satall}, $pt\notin E$ and suppose that $\{p,t\}$ is not reducible. Since $w$ is a common neighbour they have at least four common neighbours. Since $tu\notin E$ this implies that $p$ has three neighbours in $Z-w$ contradicting the fact that $Y$ is a proper core. Hence $\{p,t\}$ is reducible. 

Now Claim \ref{claim:nonadm} implies that $\{p,t\}$ is non-admissible.
\end{proof}

By Claim \ref{claim:lastbut1}, $\{p,t\}$ is a reducible vertex-pair. Lemma \ref{lem:nonadmpair} implies that there exists a $j$-set $U$ containing $p,t$ with $j\in \{6,7,8\}$ and $|U|\geq 6$. Claim \ref{claim:no6satall} implies that $j\neq 6$ and applying the argument in Claim \ref{claim:Z8} to $U$ implies that $j=8$.
Also if $U$ is a proper core then applying the argument in Claim \ref{claim:Znotfullnew} to $U$ gives a contradiction, so $U$ is not a proper core. 
Thus $U$ is an 8-set, $d_{G[U]}(p)=2$, $U-p$ is both a proper core and a 7-set and $U-p\subset Y$ and the pair $\{p,t\}$ has either one or two common neighbours both in $V \setminus Y$. 

Since $pw\in E$, $G[Y\cup \{p,w\}]$ has minimum degree 3 and contains $Y\cup Z$. Let $W'$ denote the closure of $Y\cup \{p,w\}$. Since $a$ has three neighbours in $Y\cup \{w\}$, $Y\cup \{p,w\}\subseteq \overline{Y\cup Z}$ and hence Claim \ref{claim:wneqv} implies that $W'$ is a proper core. Since $|W'|>|Y|$ this contradicts the maximality of $Y$ whenever $i(W')\geq 3|W'|-8$. In particular this contradiction occurs if $Y$ is a 7-set (so that $Y\cup \{p,w\}$ is an 8-set and $W'$ is a $j$-set for some $j\leq 8$). 
Thus $Y$ is an 8-set, $Y\cup \{p,w\}$ is a 9-set and $W'\neq V$ is also a 9-set. (The fact that $W'\neq V$ is ensured by Claim \ref{claim:wneqv}.)
By Claim \ref{claim:Z8}, $\{q,t\}$ and $\{r,t\}$ are also vertex-pairs. Consider the pair $\{q,t\}$. 
If $\{q,t\}$ is reducible, then by Claim \ref{claim:nonadm} and applying a similar argument to the previous paragraph, we obtain an 8-set and $q$ has exactly two neighbours in $Y$. If $\{q,t\}$ is not reducible, then $\{q,t\}$ has two common neighbours in $Y$ and a single common neighbour not in $Y$, since $Y$ is a proper core, and hence $q$ has exactly two neighbours in $Y$. Thus both $q$ has exactly two neighbours in $Y$. The same argument applies to $r$.

Hence $Y\cup \{p,q,r,w\}$ is a 9-set. It follows that none of $pq,pr,qr$ exists (otherwise $W'$ contradicts the maximality of $Y$). Thus $\{p,q\}$, $\{p,r\}$ and $\{q,r\}$ are vertex-pairs with $w\in N_{p,q},N_{p,r},N_{q,r}$. 
Suppose the pairs $\{q,t\}$ and $\{r,t\}$ are both not reducible. Since $Y$ is a proper core, the pair $\{q,r\}$ has a common neighbour $\gamma \notin Y$ distinct from $w$ and, by 5-regularity, at least one common neighbour $\delta \in Y$. Thus $w,\gamma,\delta \in N_{q,r}$. Now $\{w, \gamma\}$ is a vertex-pair with $q,r,t \in N_{w,\gamma}$. Suppose this vertex-pair is not reducible, so $\{w,\gamma\}$ has at least one more common neighbour. Hence $\gamma$ is adjacent to at least one of $p$ or $s$. Since $\gamma$ would have at least four neighbours in $Y \cup \{w,p,q,r\}$, $W'$ contradicts the maximality of $Y$. 

Hence suppose $\{w,\gamma\}$ is a reducible vertex-pair. By Claim \ref{claim:nonadm}, $\{w,\gamma\}$ is non-admissible. By Lemma \ref{lem:nonadmpair}, there exists a $j$-set $P$ with $|P|\geq 6$ and $j\in \{6,7,8\}$, such that $w,\gamma\in P$ and $q,r,t \notin P$. Applying the argument in Claim \ref{claim:Znotfullnew}, $P$ is not a proper core, which implies $P$ is an 8-set, $d_{G[P]}(\gamma)=2$, $P-\gamma$ is a proper core and a 7-set, and $P-\gamma \subset Y$. Then $\gamma$ has five neighbours in $Y\cup \{w,p,q,r\}$, giving a contradiction. 

Thus at most one of the vertex-pairs $\{q,t\}$ and $\{r,t\}$ is not reducible. By a similar argument, no pair $\{i,j\}$ with $i,j\in \{p,q,r\}$ and $i\neq j$ has a common neighbour outside of $Y$ distinct from $w$. Hence $p,q,r$ each have two unique neighbours in $V\setminus (Y\cup \{w,p,q,r\})$. Let $p$, $q$ and $r$ have neighbours $p_1,p_2$; $q_1,q_2$ and $r_1,r_2$ respectively all in $V\setminus (Y\cup \{w,p,q,r\})$. 

Consider the pair $\{p_1,u\}$ with $p_1u\notin E$ otherwise we contradict Claim \ref{claim:Z8}. If $\{p_1,u\}$ is not reducible, then $p_1$ has two neighbours in $Y$ and $Y\cup \{w,p,q,r,p_1\}$ is a 9-set. If $\{p_1,u\}$ is reducible, then, applying a similar argument to previously, $p_1$ has two neighbours in $Y$. The same holds for $p_2$, $q_1$, $q_2$, $r_1$ and $r_2$. No edge between any pair of these six vertices exists, otherwise $W'$ contradicts the maximality of $Y$. 

Suppose the vertex-pair $\{p_1,p_2\}$ has a common neighbour $\lambda \notin Y$ unique from $p$. Now $\{p,\lambda \}$ is a vertex-pair. If $\{p,\lambda \}$ is not reducible, then $\lambda$ has four neighbours in $Y\cup \{w,p,q,r,p_1,p_2\}$ and $W'$ contradicts the maximality of $Y$. If $\{p,\lambda\}$ is reducible, then we obtain that $\lambda$ has two neighbours in $Y$, again contradicting the maximality of $Y$. Similarly $\{q_1,q_2\}$ and $\{r_1,r_2\}$ do not have common neighbours outside of $Y\cup \{ w,p,q,r,p_1,p_2,q_1,q_2,r_1,r_2 \}$.

Next suppose the vertex-pair $\{p_1,q_1\}$ has a common neighbour $\mu \notin Y$. Then $\{\mu,p\}$ and $\{\mu,q\}$ are vertex-pairs. Consider the pair $\{\mu,p\}$ which has $p_1$ as a common neighbour, and note that $\mu p_2 \notin E$. It follows that $\{\mu,p\}$ is reducible. Then $\mu$ has two neighbours in $Y$ and we contradict the maximality of $Y$, since $Y\cup \{w,p,q,r,p_1,q_1,\mu \}$ gives an 8-set. Hence each of $p_1$, $p_2$, $q_1$, $q_2$, $r_1$ and $r_2$ has two unique neighbours in $V \setminus (Y\cup \{w,p,q,r\})$. Then we can repeat the previous argument with each of these vertices to show that each vertex has two neighbours in $Y$ and two distinct neighbours outside of $Y$. Each pair of these vertices also has no edge between them. Since $G$ is connected, $V$ is finite and $W\neq V$, this process must terminate giving either an admissible vertex-pair or a contradiction.
\end{proof}

\section{Concluding remarks}

As already mentioned the complete bipartite graph $K_{d+2,d+2}$ is a counterexample to the naive extension of Theorem \ref{thm:main} to higher dimensions. Nevertheless it would be interesting to establish a result of the form, if $G$ is $d$-sparse, $(d+2)$-regular and not one of a finite number of exceptions then $G$ is $\mathcal{R}_d$-independent. 

In \cite[Theorem 5.1]{JJbounded} Jackson and Jord\'an proved that if $G$ is a graph such that $i(X)\leq \frac{1}{2}(5|X|-7)$ for all $X\subseteq V$ with $|X|\geq 2$, then $G$ is $\mathcal{R}_3$-independent.  Our main result deals with a family of graphs satisfying $i(X)= \frac{5}{2}|X|$. Hence one may wonder if their result can be improved to cover all graphs with $i(X)\leq \frac{5}{2}|X|$. However such graphs need not be $3$-sparse and the double banana graph satisfies both sparsity conditions. Thus it may be challenging to establish which graphs with such a sparsity are $\mathcal{R}_3$-independent. 

In \cite{DKN} an analogue of Theorem \ref{thm:bounded} was obtained in the context of rigidity under $\ell_p$ norms. That is, where Euclidean distance is replaced by $\ell_p$ distance. In this context the analogue of Lemma \ref{lem:max} says that $\ell_p$-independent graphs satisfy $i(X)\leq d|X|-d$ for all $X\subset V$.
We expect there may be a straightforward adaptation of Theorem \ref{thm:1highdeg} but think it would be interesting to extend our techniques to prove an analogue of Theorem \ref{thm:main} in this context.

\section*{Acknowledgements}

A.\,N.\ was partially supported by EPSRC grant EP/X036723/1. We thank Georg Grasegger for computing $\mathcal{R}_3$-independence for 5-regular, 3-sparse graphs on 12 and 14 vertices.

\bibliographystyle{abbrv}
\def\lfhook#1{\setbox0=\hbox{#1}{\ooalign{\hidewidth
  \lower1.5ex\hbox{'}\hidewidth\crcr\unhbox0}}}

\end{document}